\documentclass[a4paper,11pt]{article}
\usepackage{fullpage}

\usepackage{amsthm}
\usepackage{amssymb}
\usepackage{amsbsy}
\usepackage{amsmath}
\usepackage{mathrsfs}
\usepackage{amstext}  
\usepackage[dvipsnames]{xcolor}
\usepackage{graphics}
\usepackage{graphicx}
\usepackage[colorlinks]{hyperref}
\usepackage{enumitem}
\usepackage{multicol}
\usepackage{upquote} 
\usepackage[utf8]{inputenc}
\usepackage[T1]{fontenc}
\usepackage{authblk}

\usepackage{caption}
\usepackage{subcaption}

\usepackage[normalem]{ulem}
\usepackage{changepage}
\usepackage{thm-restate}

\usepackage{longtable}

\newtheorem{theorem}{Theorem}[section]  
\newtheorem{proposition}[theorem]{Proposition} 
\newtheorem{lemma}[theorem]{Lemma} 
\newtheorem{corollary}[theorem]{Corollary}

\newtheorem{conjecture}[theorem]{Conjecture}
\newtheorem{question}[theorem]{Question}

\newcommand{\x}{\chi_{\star}}
\newcommand{\purple}{\operatorname{\textcolor{Purple}{purple}}}
\newcommand{\red}{\operatorname{\textcolor{red}{red}}}
\newcommand{\green}{\operatorname{\textcolor{Green}{green}}}
\newcommand{\orange}{\operatorname{\textcolor{Orange}{orange}}}
\newcommand{\blue}{\operatorname{\textcolor{blue}{blue}}}

\newcommand{\spn}{\mathcal{S}}

\date{}

\title{Star Coloring on Some Subclasses of Chordal Graphs}

\author[1]{Germán Benítez-Bobadilla\thanks{\texttt{german@ciencias.unam.mx}}}
\author[2]{Fernando Esteban Contreras-Mendoza\thanks{\texttt{esteban.contreras@im.unam.mx}}}
\author[1]{César Hernández-Cruz\thanks{\texttt{chc@ciencias.unam.mx}}}
\author[3]{Cláudia Linhares Sales\thanks{\texttt{linhares@dc.ufc.br}}}
\author[1]{Ana Trujillo-Negrete\thanks{\texttt{ltrujillo@ciencias.unam.mx}}}

\affil[1]{Facultad de Ciencias, Universidad Nacional Autónoma de México, Ciudad de México, Mexico}
\affil[2]{Instituto de Matemáticas, Universidad Nacional Autónoma de México, Ciudad de México, Mexico}
\affil[3]{Departamento de Computa\c{c}\~ao, Universidade Federal do Cear\'a, Fortaleza, Brazil}

\date{\today} 
\begin{document}

\maketitle

\begin{abstract}
    A star coloring of a graph $G$ is a proper coloring in which no path on four
    vertices is bicolored. The star chromatic number $\x(G)$ is the minimum
    number of colors in a star coloring of $G$. In this work we study star
    colorings from the perspective of forbidden induced subgraphs, focusing on
    three subclasses of chordal graphs. We provide both a structural
    characterization and a characterization in terms of forbidden induced
    subgraphs for star $3$-colorable chordal graphs; such characterizations
    yield a simple certifying recognition algorithm, running in time
    $O(|V|+|E|)$, for this class. We also characterize split graphs that are
    star $4$-colorable and star $5$-colorable in terms of (finitely many)
    forbidden induced subgraphs, again deriving linear-time certifying
    recognition algorithms. Finally, we study star colorings of $2$-trees and
    $2$-paths: we characterize the $2$-paths that are star $4$-colorable, prove
    that every $2$-path is star $5$-colorable, and exhibit a $2$-tree on $21$
    vertices with star chromatic number $6$ such that any proper induced
    subgrahp has star chromatic number $5$.
\end{abstract}

{\small
\noindent\textbf{Keywords:} star chromatic number; 
minimal obstructions;
chordal graphs;
split graphs; 
$2$-paths;
certifying algorithms.
}

\section{Introduction}

We consider graphs with neither loops nor parallel edges.   We usually assume
that a graph $G$ has vertex set $V(G)$ and edge set $E(G)$; when there is no
ambiguity we only use $V$ and $E$.   We follow \cite{bondy2008} for basic
notation, nomenclature and definitions.

For a positive integer $k$, a \textit{(proper) vertex $k$-coloring} (or simply
\textit{$k$-coloring}) of a graph $G$ is a mapping $c \colon V \to \{0, \dots,
k-1\}$ such that $c(u) \neq c(v)$ whenever $uv\in E(G)$.   For each $i \in \{ 0,
\dots, k-1 \}$, the \textit{color class} $V_i$ of the coloring $c$ is defined to
be $V_i = c^{-1}(i)$; notice that $c$ is not necessarily surjective and thus
some color classes could be empty.   We sometimes describe a $k$-coloring by
listing its color classes $(V_0, \dots, V_{k-1})$.   Any $k$-coloring of a graph
$G$ is a \textit{vertex coloring} (or simply \textit{coloring}) of $G$. An
\textit{acyclic coloring} of $G$ is a proper coloring in which no cycle of $G$
is bicolored---equivalently, for every pair of colors, the subgraph induced by
the corresponding color classes is a forest. The \textit{acyclic chromatic
number} of $G$, denoted by $\chi_a(G)$, is the minimum number of colors in an
acyclic coloring of $G$.   A \textit{star coloring} of $G$ is a proper coloring
in which no path on four vertices (not necessarily induced) is bicolored, that
is, there is no path of length~$3$ whose vertices use only two colors.
Equivalently, for every pair of colors, the subgraph induced by the
corresponding color classes is a disjoint union of stars (a \textit{star
forest}). The \textit{star chromatic number} of $G$, denoted by
$\x(G)$\footnote{This parameter has been usually denoted by $\chi_s$, but we
propose to use $\star$ instead of $s$ because it is clear how to read it and it
frees the letter $s$ to be used on other parameters.}, is the smallest integer
$k$ for which $G$ admits a star coloring with $k$ colors. Clearly, every star
coloring is an acyclic coloring (any bicolored cycle contains a bicolored path
on four vertices), and therefore
\begin{equation}
    \label{eq:inequalitiesChain}
    \omega(G) \le \chi(G) \le \chi_a(G)\le \x(G)\quad\text{for every graph $G$.}
\end{equation}

For a positive integer $k$, a \textit{star $k$-coloring} of a graph $G$ is a
star coloring of $G$ that uses at most $k$ colors. We say that $G$ is
\textit{star $k$-colorable} if it admits a star $k$-coloring, that is, if
$\x(G)\le k$.

\paragraph{Previous work.}
Gr\"unbaum introduced acyclic and star colorings in \cite{grunbaumIJM14}, where
he proved that every planar graph admits an acyclic coloring with 9 colors (and,
as he observed, `hence a star coloring with 2304 colors') and conjectured that 5
colors are enough for any planar graph.  In \cite{borodinDM25}, Borodin proved
Gr\"unbaum's conjecture, obtaining as a corollary that every planar graph admits
a star coloring with 80 colors.   Notice that Gr\"unbaum did not propose a name
for star colorings, and in 1979 Borodin used the name \textit{fan chromatic
number} for our star chromatic number.   (Fertin, Raspaud, and Reed
\cite{fertinJGT47} seem to be the first to use the name star coloring for this
kind of coloring.) In \cite{nesetril2003}, Ne\v set\v ril and Ossona de Mendez
improved the upper bound for the number of colors in a star coloring of a planar
graph to $30$, and shortly after, Albertson, Chappell, Kierstead, K\"ungden and
Ramamurthi \cite{albertsonEJC11} further improved this upper bound to $20$
colors.   For general planar graphs, the upper bound of $20$ for the star
chromatic number has not been improved, but it is not known to be sharp.  Many
authors have worked with additional restrictions to obtain better results, for
example, by restricting the girth of a planar graph.

From the short discussion in the previous paragraph, it is not hard to imagine
that most of the existing work related to star coloring of graphs is focused on
planar graphs.   Fertin, Raspaud and Reed \cite{fertinJGT47} obtained the star
chromatic number for some classic families of graphs (trees, cycles, complete
bipartite graphs, etc.) and calculated bounds for some other (grids, toroidal
grids, graphs of bounded treewidth, etc.).

Regarding complexity, in \cite{albertsonEJC11} it was proved that the problem of
deciding whether a planar bipartite graph has a star coloring with at most $3$
colors is NP-complete, and also that for any integers $k$ and $t$ such that $2
\le t \le k$, given a graph $G$ with $\chi(G) = t$, it is NP-complete to decide
whether $\x(G) \le k$.   Bok, Jedli\v ckov\'a, Martin, Ochem, Paulusma and Smith
\cite{bokJCSS154} studied the complexity of the problem of finding a star
coloring with at most $k$ colors (for $k \ge 3$) for a graph $G$ when $G$ is
$H$-free; they proved that the problem is solvable in polynomial time whenever
$H$ is a linear forest, and NP-complete otherwise. Additionally, when $k$ is
part of the input, the problem of determining whether an $H$-free graph $G$
admits a star coloring with at most $k$ colors is solvable in polynomial time
when $H$ is an induced subgraph of $P_4$ and NP-complete otherwise, except for
the open case when $H$ is isomorphic to $2K_2$.

Pursuing a question analogous to $\chi$-boundedness, for a variety of graph
classes $\mathcal{H}$, Karthick \cite{karthickGC34} provided upper bounds for
the star chromatic number of any graph $G$ in $\mathcal{H}$, in terms of its
clique number.

\paragraph{Additional definitions.}
A graph class $\mathcal{C}$ is called \textit{hereditary} if it is closed under
taking induced subgraphs; that is, whenever $G \in \mathcal{C}$ and $H$ is an
induced subgraph of $G$, then $H \in \mathcal{C}$.    Let $\mathcal{C}$ be a
hereditary class of graphs. A graph $F \notin \mathcal{C}$ is a \textit{minimal
obstruction to $\mathcal{C}$} if every proper induced subgraph of $F$ belongs
to $\mathcal{C}$. In other words, $F$ is minimal (with respect to the induced
subgraph relation) among all graphs that do not belong to $\mathcal{C}$.   The
\textit{obstruction set for $\mathcal{C}$} is the set $\mathcal{F}$ of all
minimal obstructions to $\mathcal{C}$. Thus, a graph $G$ belongs to
$\mathcal{C}$ if and only if it does not contain any member of $\mathcal{F}$ as
an induced subgraph; in such a case, we say that $\mathcal{C}$ \textit{admits a
characterization by minimal obstructions}.   Observe that every hereditary class
admits a characterization by minimal obstructions.

Given a family of graphs $\mathcal{C}$, an algorithm to recognize members of
$\mathcal{C}$ is \textit{certifying} if besides returning a boolean (yes or no)
answer, it also returns \textit{yes-certificates} and \textit{no-certificates}
for the user to verify that the answer provided by the algorithm is correct. For
example, a certifying algorithm for the recognition of bipartite graphs takes a
graph $G$ as an input and returns a bipartition of $G$ (a yes-certificate) if it
is bipartite or an odd cycle (a no-certificate) when it is not.

It is easy to see that, for every fixed integer $k$, the class of star
$k$-colorable graphs is hereditary, since the restriction of a star $k$-coloring
to an induced subgraph is again a star $k$-coloring.

If $G$ and $H$ are disjoint graphs, the \textit{join} of $G$ and $H$, denoted $G
\oplus H$, is the graph obtained from the disjoint union of $G$ and $H$ by
adding all edges joining a vertex of $G$ with a vertex of $H$.

\paragraph{Our contribution.}
The focus of this work is to characterize star $k$-colorable graphs in terms of
minimal obstructions when restricted to some well-known subfamilies of chordal
graphs.   These characterizations are then used to design certifying algorithms
to recognize the star $k$-colorable graphs members of these classes of graphs.
Notice that this line of research has been widely considered for other
hereditary properties of graphs, most notoriously graphs with chromatic number
at most $k$ \cite{cameronTCS864,maffraySIDMA26}.   This is the first time that
we are aware of the star chromatic number being studied from the minimal
obstructions perspective. In Section~\ref{sec:chordal} we exhibit the family of
minimal obstructions to star $3$-colorable graphs when restricted to chordal
graphs, and use it to produce a certifying algorithm to recognize this class of
graphs with running time $O(|V|+|E|)$.   Section~\ref{sec:split} is dedicated to
characterize split star $4$-colorable and $5$-colorable graphs in terms of
minimal obstructions; in both cases certifying algorithms of time $O(|V|+|E|)$
are found to recognize the corresponding family of graphs.   The main results of
Section~\ref{sec:2-trees} are a characterization of $2$-paths admitting a star
$4$-coloring and a tight upper bound of $5$ colors on the star chromatic number
of all $2$-paths.

\section{Chordal Graphs}
\label{sec:chordal}

A graph is \textit{chordal} if it has no induced cycle of length at least four.
A vertex $v$ in a graph $G$ is \textit{simplicial} if its neighborhood in $G$ is
a clique.   One of the best known characterizations of chordal graphs affirms
that a graph is chordal if and only if it admits a linear ordering of its vertex
set $v_1 < \cdots < v_n$ such that $v_i$ is a simplicial vertex in $G[\{ v_i,
\dots, v_n\}]$ for each $i \in \{ 1, \dots, n \}$.   Such an ordering is called
a \textit{perfect elimination ordering} of $G$ \cite{golumbic2004}.   Perfect
elimination orderings are important in algorithmic problems related to chordal
graphs; knowing a perfect elimination ordering for $G$ allows to find a maximum
clique of $G$ in time $O(|V|+|E|)$.   When describing algorithms related to star
colorings in chordal graphs we implicitly use the fact that a perfect
elimination ordering of a chordal graph can be found in time $O(|V|+|E|)$.

Let $G$ be a chordal graph and let $(V_1, \dots, V_k)$ be a $k$-coloring of $G$.
Since $G[V_i \cup V_j]$ is both chordal and bipartite, it does not have any
induced cycles.   Hence, in the class of chordal graphs the acyclic chromatic
number coincides with the usual chromatic number. Therefore, within this class,
the star chromatic number is the most interesting of the two parameters.  We
know that a graph admits a star $2$-coloring if and only if it is a forest of
stars.   Thus, we begin our study with the class of chordal graphs admitting a
star $3$-coloring.

\begin{figure}[htb!]
    \centering
    \includegraphics[width=0.82\textwidth]{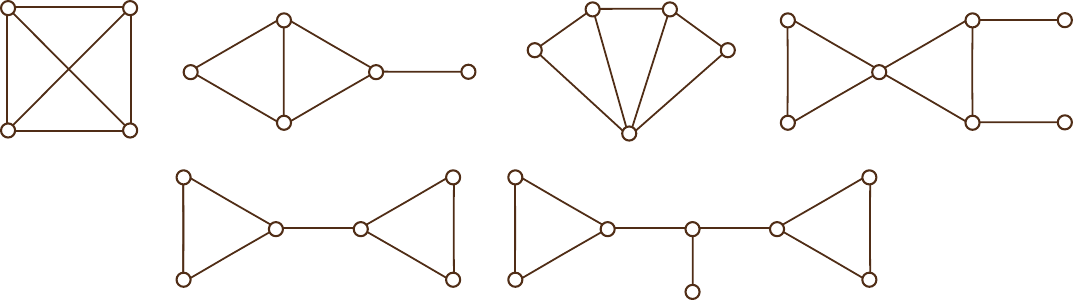}
    \caption{Six minimal obstructions to star $3$-colorability in chordal
    graphs. In the first row, from left to right, we have the graphs $K_4$,
    kite, gem, and $X_{127}$; in the second row the graphs $D_4$ and $D_5$ are
    depicted.}
    \label{fig:chordal_3}
\end{figure}

Let $k$ be a positive integer.   The \textit{$k$-centipede} (also known as
\textit{$k$-comb}) is the graph with vertex set $\{ u_i \}_{i=1}^k \cup \{ v_i
\}_{i=1}^k$ and with edge set $\{ u_i v_i \}_{i=1}^k \cup \{ u_i u_{i+1} \colon\
i \in \{ 1, \dots, k-1 \} \}$.   In other words, the $k$-centipede is obtained
by adding a pendant vertex attached to each vertex in a path of length $k$.
The path $(u_1, \dots, u_k)$ is often called the \textit{spine} of the
centipede.  For an integer $k\ge 4$, we define $D_k$ to be the graph obtained
from the $k$-centipede by identifying $v_1$ with $v_2$ and $v_{k-1}$ with $v_k$
(the lower row of Figure~\ref{fig:chordal_3} shows $D_4$ and $D_5$).  Notice
that $D_4$ is isomorphic to $2K_3 + e$.

\begin{lemma}
\label{lem:dk}
    For every integer $k\ge 4$, the graph $D_k$ is a minimal obstruction to
    star $3$-coloring. 
\end{lemma}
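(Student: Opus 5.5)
The plan has two parts: show that $D_k$ has no star $3$-coloring, and then show that deleting any single vertex leaves a star $3$-colorable graph. Throughout, write $a=v_1=v_2$ and $b=v_{k-1}=v_k$. Then $a u_1 u_2$ and $u_{k-1}u_k b$ are triangles, the spine $u_1\cdots u_k$ is a path, and $v_i$ is a pendant vertex at $u_i$ for $3\le i\le k-2$.

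The tool is one local observation about a star coloring $c$. If $x$--$y$--$z$ is a path with $c(x)=c(z)$, then no neighbour $w\neq y$ of $z$ can have color $c(y)$, since otherwise $x y z w$ would be a bicolored $P_4$. Consequently, with only three colors, all neighbours of $z$ other than $y$ receive the third color.

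\textbf{Non-colorability.} Suppose $c$ is a star $3$-coloring of $D_k$. I will prove by induction on $i$, for $3\le i\le k-1$, the following invariant $K(i)$: $u_{i-1}$ has a neighbour $y\neq u_i$ with $c(y)=c(u_i)$.
\begin{itemize}
\item Base case $i=3$. The triangle $a u_1 u_2$ uses all three colors, and $c(u_3)\neq c(u_2)$. Hence $c(u_3)\in\{c(u_1),c(a)\}$, so we may take $y\in\{u_1,a\}$.
\item Inductive step. Given $K(i)$ with $i\le k-2$, apply the observation to the path $y$--$u_{i-1}$--$u_i$. It forces $u_{i+1}$ and $v_i$ to share the third color. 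Then $v_i$ witnesses $K(i+1)$.
\end{itemize}
Finally, $K(k-1)$ holds; this uses $k\ge 4$, so that $k-1\ge 3$. The observation then forces the two adjacent vertices $u_k$ and $b$ to receive the same color, which is a contradiction. Thus $\x(D_k)\ge 4$.

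\textbf{Minimality.} It suffices to exhibit a star $3$-coloring of $D_k-w$ for each vertex $w$, and the reflection $u_i\leftrightarrow u_{k+1-i}$ halves the number of cases. The natural coloring is the one forced above, propagated from the left. Color $a,u_1,u_2$ with $1,2,3$, give $u_3$ the color $2$, and for $i\ge 3$ give $u_{i+1}$ and $v_i$ the unique color different from $c(u_i)$ and $c(u_{i-1})$. This coloring is periodic along the spine, and I will check that it is a star coloring of every subgraph of $D_k$ in which the right-hand triangle is destroyed. Checking it amounts to verifying that every bicolored $P_3$ is centred at a spine vertex whose remaining neighbours have the third color.
\begin{itemize}
\item Deleting $b$, $u_k$ or $u_{k-1}$ destroys the right-hand triangle, so the pattern applies directly.
\item Deleting a spine vertex $u_j$ disconnects the graph. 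Each component contains at most one triangle, so each is colored by the pattern started from its triangle end, with colors permuted as needed.
\end{itemize}

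The main obstacle is deleting a pendant $v_j$, $3\le j\le k-2$, since the graph stays connected and keeps both triangles. Here I would color from the left up to $u_j$ and from the right, mirrored and with colors permuted, down to $u_{j+1}$. The permutation will be chosen so that $c(u_{j+1})\notin\{c(u_j),c(u_{j-1})\}$ and so that the bicolored stars centred at $u_{j-1}$ and at $u_{j+2}$ are not extended through the edge $u_ju_{j+1}$. Since $u_j$ has no pendant, the chain of forced stars is broken there. I will then verify directly that each $P_4$ containing the edge $u_ju_{j+1}$, of which there are only boundedly many, is not bicolored.
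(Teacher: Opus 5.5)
Your proposal is correct and follows essentially the paper's route: non-colorability comes from propagating the forced coloring along the spine, and minimality comes from gluing the left-propagated coloring to the mirrored right-propagated coloring at the deleted pendant $v_j$. Your invariant $K(i)$ is a cleaner version of the paper's propagation and also covers $k=4$ uniformly. Your two conditions on the permutation amount to taking $c(u_{j+1})$ to be the third color of $\{c(u_{j-1}),c(u_j)\}$ and $c(u_{j+2})=c(u_{j-1})$ (so $c(v_{j+1})=c(u_j)$). This is always attainable, since the mirrored pattern gives $u_{j+1},u_{j+2}$ distinct colors, and it is exactly the paper's ``continue the period-$3$ pattern and give $v_{j+1}$ the color of $u_j$''.
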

\begin{proof}
    Let $x$ and $y$ be the vertices obtained by identifying $v_1$ with $v_2$ and
    $v_{k-1}$ with $v_k$, respectively.  Let us try to produce a star
    $3$-coloring for $D_k$.   Suppose, without loss of generality, that $c(x) =
    \red$, $c(u_1) = \green$ and $c(u_2) = \blue$. Suppose also, without loss of
    generality, that $c(u_3) = \red$.   Since the sequence of colors of the path
    $(x, u_2, u_3)$ is $(\red, \blue, \red)$, we have that $c(v_3) = c(u_4) =
    \green$.   Now, the sequence of colors of the path $(v_3, u_3, u_4)$ is
    $(\green, \red, \green)$, so it follows that $c(v_4) = c(u_5) = \blue$.
    Inductively, it is clear that $c(v_i) = c(u_{i+1})$ for every $i \in \{3,
    \dots, k-2\}$ and also that the coloring of $(u_3, \dots, u_{k-2})$ is
    forced to be the repeating pattern $(\red, \green, \blue)$.   When $k \ge
    5$, we have that $(v_{k-2}, u_{k-2}, u_{k-1})$ is a path such that
    $c(v_{k-2}) = c(u_{k-1})$ and $\{u_{k-1}, u_k, y\}$ induces a triangle in
    $D_k$. Therefore, if $3$ colors are used to color $D_k$, one of $u_k$ or $y$
    must receive the same color as $u_{k-2}$, which results in a bicolored
    $P_4$.   Hence, $D_k$ is not star $3$-colorable. The case when $k = 4$ is
    similar. 

    To see that $D_4$ is minimal, the reader can refer to the colorings
    presented in Figure \ref{fig:chordal_3-coloring}.   For larger values of
    $k$, the only interesting case is $D_k - v_i$ for $i \in \{3, \dots, k-2\}$.
    The strategy described in the previous paragraph produces the only possible
    star $3$-coloring (modulo a permutation of colors) of the subgraph of $D_k$
    induced by $\{x, u_1, \dots, u_i, v_3, \dots, v_{i-1}\}$.  Thus, after
    coloring with $3$ colors the triangle induced by $\{x, u_1, u_2\}$ and
    propagating the coloring in the only possible way up to $u_i$, we get that
    $c(u_i) = c(v_{i-1})$.   Hence, the color of $u_{i+1}$ is also forced by the
    bicolored path $(v_{i-1}, u_{i-1}, u_i)$, so we get that $c(u_i) \ne
    c(u_{i+1}) \ne c(u_{i-1})$.   Now, we can safely continue with the same
    repeating pattern $(\red, \green, \blue)$ for the path $(u_3, \dots,
    u_{k-2})$, use the color of $u_i$ for $v_{i+1}$ and in general, use the
    color of $u_j$ for $v_{j+1}$ for every $j \in \{i, \dots, k-3\}$.   Notice
    that this coloring can be easily extended to a star $3$-coloring of $D_k -
    v_i$, and it is actually the same coloring that would be obtained by
    coloring first the triangle induced by $\{u_{k-1}, u_k, y\}$ and then
    propagating the coloring with the strategy described in the previous
    paragraph.
\end{proof}

\begin{figure}[htb!]
    \centering
    \includegraphics[width=0.9\textwidth]{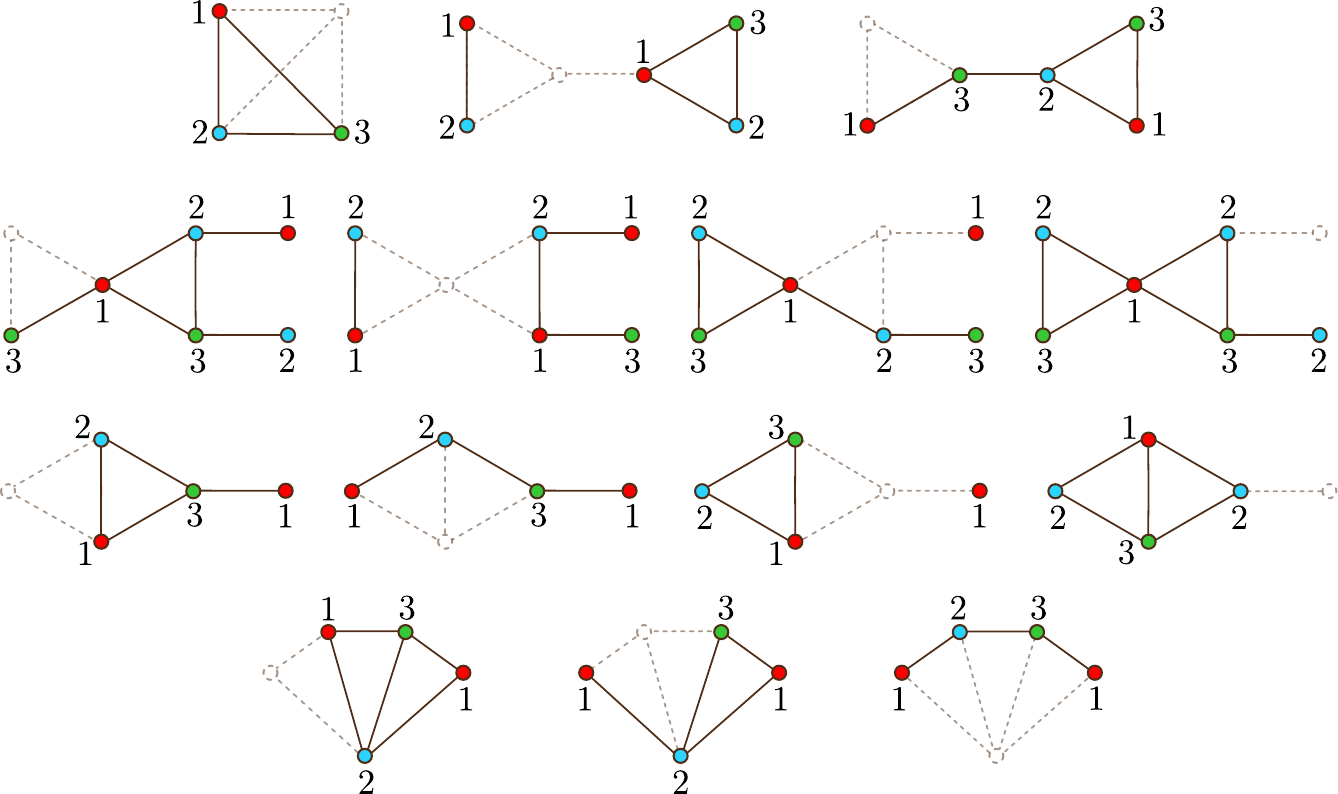}
    \caption{Star $3$-colorings of all induced subgraphs obtained, up to
    isomorphism, by deleting a single vertex from $K_4$, kite, gem, $X_{127}$
    and $D_4$. These colorings show that every proper induced subgraph of
    these graphs is star $3$-colorable. Together with Lemma~\ref{lem:dk}
    (which proves the minimality of the graphs $D_i$ for $i\ge 4$), this
    implies that every graph in $\mathcal{F}_3^{\mathrm{ch}}$ is a minimal
    obstruction to star $3$-colorability in chordal graphs.}
    \label{fig:chordal_3-coloring}
\end{figure}  

It is interesting to observe that the coloring described for $D_k$ when a vertex
is removed in the proof of Lemma \ref{lem:dk} is equivalent, with a suitable
initial choice of colors, to the one obtained by coloring each triangle with
three colors, and then propagating the coloring using BFS until reaching the
only vertex of degree $2$ in the path joining both triangles. 

Set $\mathcal{F}_3^{\mathrm{ch}}$ to be the family $\{K_4, \textnormal{kite},
\textnormal{gem}, X_{127}\} \cup \{D_i \colon\ i \in \mathbb{Z}_{\ge 4}\}$,
partially depicted in Figure~\ref{fig:chordal_3}. Each graph in
$\mathcal{F}_3^{\mathrm{ch}}$ is chordal but does not admit a star $3$-coloring.
Moreover, every proper induced subgraph of a graph in
$\mathcal{F}_3^{\mathrm{ch}}$ is star $3$-colorable, as shown in
Figure~\ref{fig:chordal_3-coloring} and Lemma~\ref{lem:dk}. Thus, the members of
$\mathcal{F}_3^{\mathrm{ch}}$ are minimal obstructions to star $3$-colorability
within the class of chordal graphs. To show that $\mathcal{F}_3^{\mathrm{ch}}$
is, in fact, the complete family of minimal obstructions to star $3$-colorings
in chordal graphs, we first describe the structure of
$\mathcal{F}_3^{\mathrm{ch}}$-free graphs.

For a non-negative integer $n$, the \textit{$n$-crown graph}, or simply
\textit{$n$-crown}, is defined to be the graph $nK_1 \oplus K_2$.   Thus, the
$0$-crown is just $K_2$, the $1$-crown is $K_3$ and the $2$-crown is the
diamond.   We say that a graph is a \textit{crown} if it is an $n$-crown for
some non-negative integer $n$.   The \textit{base} of the crown is the $K_2$ in
the definition of $n$-crown; in a $K_3$, any edge can be the base, but we fix
only one of the edges as the base.   The $0$-crown is referred to as the
\textit{trivial crown}.

For a graph $G$ and an edge $e$ of $G$, an \textit{$e$-augmentation} of $G$ is a
graph obtained from $G$ by adding an independent set of vertices $S$ and adding
the edges from each vertex in $S$ to the endpoints of $e$ (and no other edges).
If $E'$ is a subset of the edge set of $G$, an \textit{$E'$-augmentation} of $G$
is a graph obtained from $G$ by performing successive $e$-augmentations for each
edge $e$ in $E'$.   Notice that an $e$-augmentation can be alternatively
described as pasting a crown with base $e$ along the edge $e$ in $G$.

For a graph $G$ and a vertex $v$ of $G$, a $v$-\textit{triangle substitution} of
$G$ is a graph $G'$ obtained from $G$ by deleting $v$ and adding three new
vertices $v_1, v_2, v_3$, edges $v_1 v_2, v_2 v_3, v_3 v_1$, and for each
neighbor $u$ of $v$ in $G$, exactly one of the edges $uv_1, uv_2$, or $uv_3$.
Notice that it is possible to obtain $G$ from $G'$ by identifying the set
$\{v_1, v_2, v_3\}$.   If $I$ is an independent set of $G$, an
\textit{$I$-triangle substitution} is a graph obtained from $G$ by performing
successive $v$-triangle substitutions for each vertex $v$ in $I$.

Let $F$ be a forest and let $S$ and $I$ be (possibly empty) subsets of $V(F)$
such that $F[S \cup I]$ is a star forest, vertices in $I$ are isolated in $F[S
\cup I]$ and $F[S]$ does not have isolated vertices.   If $F'$ is obtained from
$F$ by performing an $I$-triangle substitution followed by a
$E(F[S])$-augmentation, we say that $F'$ is a \textit{crowned forest}.   Let
$\mathcal{T}$ be the set of all crowned forests.

By construction, every block of a crowned forest $G$ is a crown (possibly a
trivial one), so crowned forests are chordal graphs.   It is not hard to notice
that crowned forests are closed under vertex deletions, so $\mathcal{T}$ is a
hereditary family.   Moreover, given a crowned forest $G$, we can identify a
forest $F$ and the subsets $S$ and $I$ of $V(F)$ used to construct $G$ by the
following procedure.   First, delete all vertices of $G$ which do not belong to
a triangle; this yields a graph such that every connected component either is a
triangle, or contains a universal vertex such that its deletion results in a
star forest. Second, mark the vertices in components that are triangles, and
color red all the vertices in the aforementioned star forest different from the
centre (choose the centre arbitrarily for each $K_2$).   Finally, delete every
red vertex, and successively identify the vertices of each triangle.   The
resulting graph is the forest $F$, the set $I$ is the set obtained from the
triangle identifications, and the set $S$ is the set of neighbors of red
vertices.  As a final observation, notice that there may be more than one way of
constructing a crowned forest from a forest $F$ and sets $S$ and $I$.   Indeed,
when a component in $F[S]$ is isomorphic to $K_2$ and the corresponding edge
augmentation on this copy of $K_2$ creates a block that is a triangle, this
block could also be obtained by a triangle substitution.   It is possible to
restrict the construction of crowned forests so the choice of $F$ and the sets
$S$ and $I$ is unique.

\begin{lemma}
\label{lem:chord-without-D}
    Let $G$ be a connected chordal graph.  The following statements are
    equivalent.
    \begin{enumerate}
        \item $G$ is $\{K_4, \textnormal{kite}, \textnormal{gem}, X_{127},
            D_4\}$-free.
        \item If $G$ is not a trivial graph, then each of its blocks is a crown,
            any two blocks that are non-trivial crowns either share a vertex or
            are at distance at least $2$, and with the only exception of a
            triangle sharing vertices with trivial crowns, any two blocks can
            only share a vertex in their bases.
        \item $G$ is in $\mathcal{T}$.
    \end{enumerate}
\end{lemma}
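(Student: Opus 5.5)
We prove the cycle of implications $(1)\Rightarrow(2)\Rightarrow(3)\Rightarrow(1)$.

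\emph{$(1)\Rightarrow(2)$.} Let $G$ be connected, chordal and $\{K_4,\textnormal{kite},\textnormal{gem},X_{127},D_4\}$-free.

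First we show that every block $B$ is a crown. A block of a chordal graph is a $2$-connected chordal graph, so it is built by gluing cliques along clique separators. Since $G$ is $K_4$-free, every maximal clique of $B$ is a triangle (or $B=K_2$). Any two triangles of $B$ that share an edge form a diamond. We claim that all triangles of $B$ share one common edge. Suppose not. Take a shortest chain of triangles in the clique tree of $B$ in which the shared edge changes. Two consecutive triangles glued on different edges give three triangles $T_1,T_2,T_3$ with $T_1\cap T_2=e$, $T_2\cap T_3=f$ and $e\neq f$. These induce a fan on five vertices, namely $P_4\oplus K_1$, which is the gem. Hence $B$ is $nK_1\oplus K_2$.

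Next, take two non-trivial crowns $B_1,B_2$ that are disjoint and at distance exactly $1$, joined by an edge $xy$ with $x\in B_1$ and $y\in B_2$. Choosing a triangle in each block, we obtain an induced $2K_3+e=D_4$.

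Finally, consider blocks sharing a cut vertex $v$. If $v$ lies in a non-trivial crown $B$ but not in its base, then $v$ has degree $2$ in $B$ with neighbors $a,b$ (the base). Suppose the other block at $v$ is non-trivial; note it cannot be a trivial crown here unless $B$ is a triangle, which is exactly the stated exception. Then a triangle $vcd$ in that block, together with $a,b$ and, if $B$ is not a triangle, another crown vertex $w$, should induce a kite, or $X_{127}$ when both blocks are diamonds. We also need the case of a trivial crown $vz$ attached to a non-base vertex of a crown with at least two tips: then $\{v,a,b,w,z\}$ induces a kite. This is exactly where the kite is used.

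The main obstacle is this last case analysis. I would identify the pictured graphs concretely (kite as a diamond with a pendant at a degree-$2$ vertex, $X_{127}$ as two triangles sharing a vertex plus a pendant or a diamond variant) and match each configuration to one of them. In particular, I must check that every violation of the ``share only base vertices'' rule contains one of these five graphs.

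\emph{$(2)\Rightarrow(3)$.} We reverse the decoding procedure described before the lemma.
\begin{enumerate}
\item Let $I$ index the triangle blocks that meet trivial crowns at non-base vertices, and contract each such triangle to a single vertex.
\item In every crown with at least two tips (and in every triangle not already placed in $I$), delete the tips but keep the base edge. Mark the base endpoints as $S$, choosing for each base a centre.
\item Verify that the result is a forest $F$. It is a forest because every block of it is now $K_2$.
\end{enumerate}
We then check the side conditions:
\begin{itemize}
\item $F[S\cup I]$ is a star forest.
\item $I$-vertices are isolated in $F[S\cup I]$.
\item $F[S]$ has no isolated vertices.
\end{itemize}
These hold because of the distance condition: two non-trivial crowns are never adjacent, so base edges of different crowns meet only at shared vertices, and base edges through a common vertex form stars centred there. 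The contracted triangle vertices have no neighbor in $S$, again by the distance condition. Reapplying the triangle substitution and the augmentations recovers $G$.

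\emph{$(3)\Rightarrow(1)$.} Every graph in $\mathcal{T}$ is star $3$-colorable. We obtain such a coloring by a BFS over the blocks: color each crown with its base colored $\{a,b\}$ and all tips colored $c$, and propagate the coloring through cut vertices, as in the remark following Lemma~\ref{lem:dk}. We must check that no bicolored $P_4$ is created. Alternatively, we can verify directly that each of the five graphs violates the structure of crowned forests, since $\mathcal{T}$ is hereditary. I would use this direct check, because each forbidden graph either has a block that is not a crown or contains two non-trivial crowns at distance $1$. Since the forbidden graphs are not star $3$-colorable while crowned forests are, $G$ is free of them.
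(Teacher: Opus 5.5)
Your proof that every block is a crown (via the gem) and your $D_4$ argument for the distance condition match the paper. So does your tip-deletion and triangle-contraction reconstruction in (2)$\Rightarrow$(3). There are, however, two genuine gaps.

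\textbf{(3)$\Rightarrow$(1) rests on a false claim.} Crowned forests are not all star $3$-colorable. For $k\ge 5$, contracting the two end triangles of $D_k$ gives a caterpillar $F$ with $I=\{t_1,t_2\}$ independent and $S=\varnothing$. Hence $D_k\in\mathcal{T}$, yet Lemma~\ref{lem:dk} shows $D_k$ is not star $3$-colorable. This is exactly why Theorem~\ref{thm:chord-with-D} needs its extra degree-$2$ condition.

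The direct check you fall back on is the right route, and it is the paper's. But your stated reason fails for two of the five graphs. The kite has blocks a diamond and a $K_2$. $X_{127}$ consists of two triangles sharing a vertex plus pendant edges. So neither has a non-crown block or two nontrivial crowns at distance $1$. Excluding them requires finer invariants of crowned forests:
\begin{itemize}
\item every non-base vertex of a crown created by augmentation has degree exactly $2$ in $G$;
\item a substituted triangle shares no vertex with another nontrivial block, because $I$ is isolated in $F[S\cup I]$.
\end{itemize}
Consequently, any triangle meeting another nontrivial block has a vertex of degree $2$.

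\textbf{The third condition of (2) is never established in (1)$\Rightarrow$(2), and one step is wrong.} If $B$ is a triangle and a triangle $vcd$ hangs at $v$, then $\{a,b,v,c,d\}$ induces a bowtie, not a kite. This is no violation at all, since the base of a triangle may be chosen through $v$. Similarly, two diamonds sharing a tip already contain a kite, so $X_{127}$ plays no role there.

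The missing argument runs as follows. By $D_4$-freeness, all other nontrivial blocks meeting a nontrivial crown meet it at a single vertex, since blocks at two adjacent vertices would be at distance $1$. By kite-freeness, nothing attaches at a tip of a crown with at least two tips. Hence the only possible failure is a triangle that meets a nontrivial block at one vertex and (necessarily trivial) blocks at both other vertices. That is a bowtie with a pendant at each outer vertex of one wing, which is the configuration $X_{127}$ forbids.

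This also shows that the exception in (2) must be read as ``a triangle sharing vertices \emph{only} with trivial blocks.'' Your claim in (2)$\Rightarrow$(3) that contracted triangle vertices have no neighbor in $S$ ``by the distance condition'' silently relies on this reading. The distance condition alone does not stop a contracted triangle from sharing a vertex with another crown.
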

\begin{proof}
    To prove that the first item implies the second one, let $G$ be a $\{K_4,
    \textnormal{kite}, \textnormal{gem}, X_{127}, D_4\}$-free graph.  Notice
    first that a $K_4$-free chordal graph is a partial $2$-tree, so every block
    of $G$ is a $2$-tree.   Thus, any block of $G$ is either a $K_2$ or is
    obtained starting with a $K_3$ by adding simplicial vertices of degree $2$.
    There are two possibilities for such a construction: The first is to add
    every simplicial vertex with the same set of neighbors, which yields a
    crown; the second is to choose a different edge for some simplicial vertex,
    but such a choice creates a gem, so this case is impossible.   We conclude
    that every block of $G$ is a crown.  It follows from the fact that $G$ is
    $D_4$-free that non-trivial blocks cannot be at distance exactly one, this
    is, they are at distance $0$ (they share a vertex) or they are at distance
    at least $2$.   Since $G$ is $\{ \textnormal{kite}, X_{127} \}$-free, the
    last condition of the second item also holds for the blocks of $G$.

    Suppose now that $G$ is a graph as those described in the second item. Let
    $X_1$ be the set of vertices of $G$ that do not belong to a non-trivial
    crown, i.e., vertices belonging only to blocks isomorphic to $K_2$.    
    Also, let $X_2$ be the set of vertices that are adjacent to both vertices of
    any crown in $G$ and have degree $2$.   As non-trivial blocks are at
    distance different from $1$ and they can only share vertices on their bases,
    it is not difficult to observe that non-trivial blocks in $G - X_2$ do not
    share vertices and each of them is a triangle such that each of its vertices
    is a cut vertex.   Let $T_1, \dots, T_r$ be the non-trivial blocks of $G -
    X_2$. Thus, the graph obtained from $G - X_2$ by recursively identifying the
    vertices in $T_i$, for $i \in \{ 1, \dots, r \}$, is a forest.   So, if we
    let $G'$ be the graph obtained from $G$ by recursively identifying the
    vertices in $T_i$, for $i \in \{ 1, \dots, r \}$, then $G' - X_2$ is a
    forest and $G' - (X_1 \cup X_2)$ is a star forest.   For each $i \in \{ 1,
    \dots, r \}$, let $t_i$ be the vertex of $G'$ obtained from identifying the
    vertices of $T_i$.   From our previous observations we obtain that $\{ t_1,
    \dots, t_r \}$ is an independent set.   So, $G$ is obtained from the forest
    $G' - X_2$ by performing a $\{ t_1, \dots, t_r \}$-triangle substitution
    followed by an $E[G' - (X_1 \cup X_2)]$-augmentation.   Therefore, $G$
    belongs to $\mathcal{T}$.
    
    Finally, it is easy to verify that no graph in $\{K_4, \textnormal{kite},
    \textnormal{gem}, X_{127}, D_4\}$ is in $\mathcal{T}$. Since $\mathcal{T}$
    is a hereditary family, we obtain that the last item implies the first one.
\end{proof}

Before proving the main result of this section, let us briefly discuss simple
strategies for star $3$-coloring some graphs in $\mathcal{T}$. Let $G$ be a
graph in $\mathcal{T}$.   If $G$ is a tree, it is obvious that using BFS,
starting from any vertex, we can obtain a star $3$-coloring for any tree by
using as colors the distances of vertices to the root modulo $3$.   Now, suppose
that $G$ has a single nontrivial block $B$.   If $B$ is a triangle, color it
with colors $\{0, 1, 2\}$ and afterwards, delete the edges of $B$.   We can now
use BFS from each of the vertices of $B$ to color the three remaining trees as
described in the previous case (a shift of the colors is necessary because we
are no longer starting in $0$).   Notice that the vertex colored $i$ in $B$ has
only neighbors of color $i+1$ out of $B$, and its neighbor of color $i+1$ in $B$
has only neighbors of color $i+2$ out of $B$ (notation considered modulo $3$).
Hence, the proposed coloring is a star $3$-coloring.   If $B$ is not a triangle,
since $G$ is kite-free, a similar argument applies for the branchings stemming
from the base of $B$.   There is one case with two nontrivial blocks which is
very similar to the cases with a single nontrivial block.   When there are
exactly two nontrivial blocks $B_1$ and $B_2$ that share a vertex $v$, then we
can again use color $0$ for $v$, color $1$ for the other base vertex in each of
$B_1$ and $B_2$, and color $2$ for the rest of the vertices in these blocks.
From here, we can once again propagate the modulo $3$ distance coloring, and the
fact that $G$ is $\{ \textnormal{kite}, X_{127} \}$-free implies that the only
rooted trees we need to worry about are the ones having as roots $v$ and the two
vertices of color $1$.   Again, this coloring is a star $3$-coloring of $G$. We
use the described strategies for coloring graphs in $\mathcal{T}$ with a single
nontrivial block, or two adjacent nontrivial blocks, in the proof of our
following result.

\begin{theorem}
\label{thm:chord-with-D}
    Let $G$ be a connected chordal graph.  The following statements are
    equivalent.
    \begin{enumerate}
        \item $G$ is $\mathcal{F}_3^{\mathrm{ch}}$-free.
        \item If $G$ is not a trivial graph, then
            \begin{itemize}
                \item each of its blocks is a crown,
                \item two blocks that are nontrivial crowns either share a
                    vertex or are at distance at least $2$,
                \item except in the case of a triangle sharing vertices with
                    trivial blocks, two blocks can only share a vertex in their
                    bases, and
                \item the path joining two nontrivial blocks at positive
                    distance contains at least one vertex of degree $2$.
            \end{itemize}
        \item $\x(G) \le 3$.
    \end{enumerate}
\end{theorem}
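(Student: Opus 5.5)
We prove the cycle of implications $(3)\Rightarrow(1)\Rightarrow(2)\Rightarrow(3)$.

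\emph{$(3)\Rightarrow(1)$.} Star $3$-colorability is hereditary. Every member of $\mathcal{F}_3^{\mathrm{ch}}$ fails to be star $3$-colorable: Lemma~\ref{lem:dk} covers the graphs $D_k$, and $K_4$, kite, gem and $X_{127}$ are checked directly. So a graph with $\x(G)\le 3$ contains none of them as an induced subgraph.

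\emph{$(1)\Rightarrow(2)$.} Suppose $G$ is $\mathcal{F}_3^{\mathrm{ch}}$-free. Lemma~\ref{lem:chord-without-D} already gives the first three bullets, so only the fourth needs proof. Let $B_1,B_2$ be nontrivial blocks at positive distance, and hence at distance at least $2$. Take a shortest path $P=(p_0,\dots,p_m)$ with $p_0\in B_1$ and $p_m\in B_2$. Assume for contradiction that every internal vertex $p_1,\dots,p_{m-1}$ has degree at least $3$.

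By the block structure (all blocks are crowns, and blocks meet only in base vertices), we can reduce to a minimal counterexample. We choose $B_1,B_2$ so that no nontrivial block meets the interior of $P$. Then each $p_i$ has a neighbor $w_i$ off $P$, lying in a block that is a $K_2$. Near the ends, choose triangles $\{p_0,a,b\}\subseteq B_1$ and $\{p_m,a',b'\}\subseteq B_2$, using the base vertices and requiring that $p_0$ be the vertex playing the role of $u_2$. Because blocks are crowns glued only along bases, the chosen vertices $\{a,b,p_0,\dots,p_m,a',b',w_1,\dots,w_{m-1}\}$ should induce exactly $D_{m+3}$. Here $a,b$ play the roles of $x,u_1$, the path $P$ is the spine $u_2,\dots,u_{m+2}$, and the $w_i$ are the pendant vertices.

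The main obstacle is the bookkeeping at this step, in two respects. First, we must guarantee that the identified triangle vertices fit the $D_k$ pattern: in $D_k$ the spine starts at $u_1$, with $x$ adjacent to $u_1,u_2$. Second, we must rule out any extra adjacency, which uses the facts that $G$ is chordal and that the blocks are separate. If a pendant $w_i$ happened to lie in another nontrivial block, that block would contradict the minimality of the choice of $B_1,B_2$.

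\emph{$(2)\Rightarrow(3)$.} We argue by induction on the number of nontrivial blocks, extending the colorings described just before the theorem. The base cases are zero nontrivial blocks, one nontrivial block, and two nontrivial blocks sharing a vertex; the strategies given above handle all of these.

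Otherwise, the block-cutvertex tree lets us decompose $G$ at a vertex $z$ of degree $2$ lying on a path between nontrivial blocks. Write $z$'s neighbors as $y_1,y_2$, and split $G$ into $G_1\ni y_1$ and $G_2\ni y_2$ by deleting $z$. By induction each $G_j$ with $z$ reattached as a pendant vertex has a star $3$-coloring $c_j$. Permute the colors of $c_2$ so that $c_1$ and $c_2$ agree on $z$ and $c_1(y_1)\ne c_2(y_2)$; this is possible because we have $3$ colors.

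It remains to check that the combined coloring is a star coloring. Any bicolored $P_4$ must pass through $z$. If it does so via $y_1zy_2$, then $c(y_1)\ne c(y_2)$ already makes the path use three colors. Otherwise it stays within a single side, where the corresponding $c_j$ is a star coloring. A degree-$2$ vertex separating the two sides is exactly what this gluing needs, and $D_k$ shows it cannot be dropped.
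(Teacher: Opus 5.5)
Your skeleton is the paper's: $(3)\Rightarrow(1)$ from the non-colorability of the obstructions, $(1)\Rightarrow(2)$ via Lemma~\ref{lem:chord-without-D} plus an induced $D_k$, and $(2)\Rightarrow(3)$ by induction on the number of nontrivial blocks, gluing two colorings at a degree-$2$ cut vertex.

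There is, however, a gap in the case split of $(2)\Rightarrow(3)$, and the paper's own proof has the same gap. Your base cases are zero, one, or two nontrivial blocks sharing a vertex, and your inductive step needs two nontrivial blocks at positive distance. Together they miss graphs with three or more nontrivial blocks, no two at positive distance. Such graphs satisfy (2) and must be colored; an example is three triangles with a common vertex, which is a crowned forest. This is the only missing configuration. If $B_1\cap B_2=\{v\}$ and $B_2\cap B_3=\{w\}$ with $w\neq v$, then by bullet~3 the vertices $v,w$ form the base of $B_2$. So $B_1$ and $B_3$ are at distance exactly $1$, contradicting bullet~2. Hence pairwise intersecting nontrivial blocks all pass through a single vertex $v$. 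The repair is immediate:
color $v$ with $0$,
color the other base vertex of every crown at $v$ with $1$,
color all remaining crown vertices with $2$,
and propagate distance modulo $3$ along the hanging trees exactly as in the two-block strategy preceding the theorem.

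With that base case added, your induction works and is slightly cleaner than the paper's. The paper picks $v$ minimizing the number of nontrivial blocks in one component of $G-v$, so that side is a base configuration, and uses induction only on the other side. You apply the induction hypothesis to both sides, which makes that minimization unnecessary. You should justify it, though. Take $P$ to be a shortest path. Then a degree-$2$ interior vertex $z$ lies in no nontrivial block, since otherwise its two neighbors would be adjacent vertices of that crown, giving a chord of $P$. So all edges at $z$ are bridges, and each side of $z$ contains one of the two blocks. Each $G_j+z$ has the same blocks as $G$ on that side (plus the bridge $y_jz$) and the same degrees away from the leaf $z$. Hence it satisfies (2) with fewer nontrivial blocks.

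Similarly, the bookkeeping you flag in $(1)\Rightarrow(2)$ is easy. The vertices $x$ and $u_1$ are interchangeable in $D_k$, so any triangle of $B_1$ through $p_0$ works. Once you take a violating pair with shortest connecting path, no nontrivial block meets the interior of $P$. So every edge at an interior $p_i$ is a bridge, and $w_i$ has no other neighbor among the chosen vertices. Whether $w_i$ lies in some nontrivial block is therefore irrelevant.
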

\begin{proof}
    The implication from the first item to the second one follows directly from
    Lemma~\ref{lem:chord-without-D} and the structure of $D_k$ for $k\ge 4$.

    To prove that the second item implies the third one, we use induction on the
    number of nontrivial blocks.   If $G$ has at most one nontrivial block or
    exactly two adjacent nontrivial blocks, we use the strategy described in the
    paragraph before this theorem to obtain a star $3$-coloring of $G$.
    Otherwise, by choosing a vertex $v$ of degree $2$ minimizing the number of
    nontrivial blocks in one of the components of $G-v$, we find that such a
    component has at most one nontrivial block or exactly two nontrivial blocks
    at distance $0$. (Otherwise, each component of $G-v$ has at least two
    nontrivial components at positive distance and the path between them has a
    vertex $w$ of degree $2$; one of the two components of $G-w$ has less
    nontrivial blocks than $G-v$.)   Let $C_1$ and $C_2$ be the components of
    $G-v$, where $C_2$ is the component having at most one non-trivial block $B$
    or two adjacent nontrivial blocks $B_1$ and $B_2$. By induction hypothesis,
    $G[V(C_1) \cup \{v\}]$ has a star $3$-coloring $c_1$.   Suppose without loss
    of generality that $c_1$ assigns color $0$ to $v$ and color $1$ to its only
    neighbor in $C_1$.   By coloring $C_2$ with the strategy described in the
    paragraph previous to this result, and maybe after a permutation of colors,
    we obtain a coloring $c_2$ of $G[V(C_2) \cup \{v\}]$ that assigns color $0$
    to $v$ and color $2$ to its only neighbor in $C_2$.   Therefore, $c_1 \cup
    c_2$ is a star $3$-coloring of $G$.

    Finally, we have already observed that no graph in
    $\mathcal{F}_3^{\mathrm{ch}}$ is star $3$-colorable, so it follows that the
    last item implies the first.
\end{proof}

We finalize this section by using the structural information obtained so far to
present an efficient recognition algorithm for star $3$-colorable chordal
graphs.

\begin{theorem}
\label{thm:chord-algo}
    There is a certifying algorithm with running time $O(|V|+|E|)$ to determine
    whether an input chordal graph $G$ is star $3$-colorable.
\end{theorem}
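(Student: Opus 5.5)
We follow the three items of Theorem~\ref{thm:chord-with-D}. The algorithm handles each connected component separately and either outputs a star $3$-coloring (yes-certificate) or an induced copy of some member of $\mathcal{F}_3^{\mathrm{ch}}$ (no-certificate). It proceeds in three stages.

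\textbf{Stage 1: block-level checks.} We compute the blocks and cut vertices of $G$ in time $O(|V|+|E|)$ using Tarjan's algorithm. For each block $B$ we decide whether it is a crown. A crown with more than two vertices has exactly two vertices of degree $|B|-1$ in $B$, and all other vertices have degree $2$ inside $B$, adjacent exactly to those two; this is checked in $O(|E(B)|)$. If $B$ is not a crown, it is a $2$-connected chordal graph with at least four vertices. We then extract an induced $K_4$ or gem in linear time as follows. Run a perfect elimination ordering restricted to $B$. Either some vertex has a neighbourhood of size at least $3$ among later vertices, which gives a $K_4$, or $B$ is a $2$-tree. In the $2$-tree case, two simplicial vertices of degree $2$ attached to different edges of a triangle, found along the ordering, give a gem. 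The proof of Lemma~\ref{lem:chord-without-D} already indicates exactly this dichotomy.

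Next we mark the base of each nontrivial crown; a triangle is handled with its special status. At each cut vertex we inspect the incident blocks. If two nontrivial blocks share a vertex outside a base, or a triangle's non-base vertex meets another nontrivial block, we output a kite or $X_{127}$. This uses a constant number of vertices read off the two blocks together with one pendant vertex. Otherwise we look for two nontrivial blocks joined by an edge not in either of them; this gives $D_4$. All of this is local to cut vertices and costs linear time overall.

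\textbf{Stage 2: the $D_k$ check.} We contract each nontrivial block to a node of the block-cut tree and traverse it. The remaining condition of item~2 says that every path between two nontrivial blocks at positive distance has an internal vertex of degree $2$ in $G$. We test it by a BFS/DFS over the tree restricted to vertices of degree at least $3$ lying in trivial blocks, starting from every nontrivial block simultaneously, so each vertex and edge is processed once. The search records the nearest source block and a parent pointer. If two searches from distinct blocks meet, we have a path $u_2,\dots,u_{k-1}$ all of whose vertices have degree at least $3$. Each internal vertex then has a neighbour off the path, and in the block-cut structure that neighbour is a leaf-free extra vertex $v_i$. Choosing the triangle vertices at the two ends, we obtain an induced $D_k$, and the vertices are read off the parent pointers.

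The \textbf{main obstacle} is showing that this extraction yields an induced $D_k$. The pendant neighbours $v_i$ must be chosen distinct and non-adjacent to other vertices of the certificate. The ends must produce the triangles $\{x,u_1,u_2\}$ and $\{u_{k-1},u_k,y\}$ even when the end block is a larger crown; there we take the base vertex and one crown vertex. Stage~1 has already excluded the kite, $X_{127}$ and $D_4$, and the block-cut tree is a tree, so chords can only arise inside blocks. Since all path edges lie in trivial blocks, this should settle inducedness. I would write this case analysis carefully, and it may be simpler to choose $v_i$ in a different block from the path edges at $u_i$.

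\textbf{Stage 3: the coloring.} If all tests pass, item~2 holds, and we follow the inductive proof of Theorem~\ref{thm:chord-with-D} to produce the coloring. Rather than recursing, we cut $G$ at all degree-$2$ vertices that separate nontrivial blocks. Each piece is colored by the BFS distance-modulo-$3$ strategy from its (at most two adjacent) nontrivial blocks. Pieces are glued at a degree-$2$ vertex $v$ by permuting colors so that $v$'s two neighbours receive distinct colors different from $v$'s. Doing this in one traversal of the block-cut tree is linear, and verifying either certificate is trivially linear.
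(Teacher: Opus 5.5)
The overall architecture matches the paper's. You decompose into blocks and test for crowns, search from the nontrivial blocks while stopping at degree-$2$ vertices (either extracting a $D_k$ or finding the cut points), and glue mod-$3$ BFS colorings of the pieces at degree-$2$ vertices by a color permutation.

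\textbf{The gap is in Stage~1.} It does not verify the third bullet of item~2 of Theorem~\ref{thm:chord-with-D}. Your tests only examine how two \emph{nontrivial} blocks meet, plus the $D_4$ edge test. That bullet also forbids bridges at non-base vertices.

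The kite itself slips through. Take a diamond with base $ab$ and degree-$2$ vertices $c,d$, plus a pendant $e$ at $c$. Its blocks are a $2$-crown and the bridge $ce$. So there is one nontrivial block, no triangle block, no $D_4$ and no $D_k$, and every test you describe passes. Yet no star $3$-coloring exists. The vertices $a,b$ receive two colors and $c,d$ the third, so $e$ repeats the color of $a$ or of $b$. Then $(e,c,a,d)$ or $(e,c,b,d)$ is bicolored. Hence ``if all tests pass, item~2 holds'' is false, and Stage~3 would return an invalid yes-certificate.

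Your triangle rule has the same blind spot. Consider a triangle meeting another nontrivial block at one vertex and bridges at both of its other vertices, e.g.\ two triangles sharing a vertex with a pendant at each remaining vertex of one of them. This graph is not star $3$-colorable. However, the offending attachments are to trivial blocks, and your triangle test never looks at those.

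The repair is local and stays linear. First, require every non-base vertex of an $n$-crown with $n\ge 2$ to have degree exactly $2$ in $G$. If this fails, the crown's diamond plus the extra neighbour is an induced kite; this is what the paper's degree test inside each block catches. Second, require every triangle that shares a vertex with another nontrivial block to have at most two cut vertices, which then form its base. If this fails, output the seven-vertex configuration above. If two of its vertices lie in other nontrivial blocks, your $D_4$ test already fires.

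Some smaller points:
\begin{enumerate}
\item In Stage~2, nontrivial blocks sharing a vertex must be merged into a single source, as the paper does. Otherwise the shared vertex is reported as a meeting of two searches.
\item The step you flag as the main obstacle is easy. The internal path vertices lie in no nontrivial block, so every certificate edge outside the two end triangles is a bridge. Any further edge among certificate vertices would therefore close a cycle through a bridge, which is impossible.
\item A triangle block does not have ``exactly two vertices of degree $|B|-1$'', so your crown test needs a separate case for it.
\end{enumerate}
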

\begin{proof}
    We first use $O(|V|+|E|)$ time to find the maximum clique of $G$. If it has
    more than $3$ vertices, we return $K_4$ as a no-certificate. Afterwards, we
    find the blocks of $G$ in $O(|V|+|E|)$ time (using, e.g., Hopcroft and
    Tarjan's algorithm \cite{hopcroftCACM16}), and inspect each of them to
    verify that it is a crown. The latter can also be achieved in time
    $O(|V|+|E|)$ by checking that each block has two or three vertices (it is a
    $0$-crown or a $1$-crown), or that within each block every vertex has degree
    $2$, except maybe for two of them, which are in the base of the crown; if
    this check fails, we find a triangle with each vertex of degree greater than
    $2$, and use it to return either a gem or a kite as a no-certificate. While
    we carry out this process, we create a list $B_1, \dots, B_r$ of all the
    nontrivial blocks of $G$ keeping constant-time access to each of them.

    The aforementioned routine leaves us with a graph in which each nontrivial
    block is a crown.    The algorithm has an additional preprocessing step
    before the actual coloring happens.   For the purposes of this proof, when
    we say ``nontrivial block'' we also consider pairs of adjacent nontrivial
    blocks.   Notice that by starting a BFS search from each nontrivial block
    and stopping a branch when we reach a vertex of degree $2$, we either
    explore a subgraph of $G$ having a single nontrivial block, or we obtain a
    branch between two nontrivial blocks where each intermediate vertex has
    degree at least $3$.   In the latter case we choose a triangle in each of
    these blocks together with the path $P$ joining them, and add an additional
    neighbor of each internal vertex of $P$; the resulting subgraph is
    isomorphic to $D_i$ for some integer $i\ge 4$ and is returned as a
    no-certificate. In the former case, assuming that we started from block
    $B_i$, we add $i$ to the label of the vertices of degree $2$ where the
    search stopped.   When this process finishes, vertices of degree $2$ reached
    by a single nontrivial block $B_i$ will have label $\{i\}$, and vertices
    reached by two different blocks $B_i$ and $B_j$ will have label $\{i,j\}$,
    for some $i,j \in \{1, \dots, r\}$.   Keep in mind that the search from each
    block stops at a branch when the first vertex of degree $2$ is found, and
    saves the distance from the block to each of these vertices (the distance
    could also be saved in the label).

    Now we start to color $G$ using the strategies described in the paragraph
    before Theorem~\ref{thm:chord-with-D}.  Color $B_1$ with $3$ colors and
    propagate this coloring following BFS until all vertices of degree $2$
    having $1$ in their label are colored; keep these vertices in a queue $Q$.
    If the next vertex $v$ in $Q$ has label $\{1,j\}$, then process $B_j$ in the
    same way as $B_1$, i.e., color it with $3$ colors and propagate the coloring
    until all vertices of degree $2$ having $j$ in their label are colored;
    enqueue these vertices as the coloring finds them.  Notice that we know the
    distance from $B_j$ to $v$, so we can choose the coloring of $B_j$ and the
    order of the colors used in the propagation in such a way that this coloring
    of $B_j$ assigns $v$ the color it already has, and more important, it
    assigns the neighbor of $v$ on the $B_j$ side a different color from the
    neighbor of $v$ on the $B_1$ side.   Therefore, we successfully extended the
    star $3$-coloring to now include an additional nontrivial block.   If the
    label of $v$ is $\{1\}$, then we keep on propagating this coloring until we
    reach vertices of degree $2$ with label $\{j\}$, add $1$ to their label, and
    enqueue them.  Notice that in this step the coloring cannot reach a vertex
    of degree $2$ with label $\{i,j\}$, as otherwise vertex $v$ should have
    label $\{1,i\}$.   This process is repeated until $Q$ is empty, i.e., until
    all labeled vertices of degree $2$, and hence all nontrivial blocks, have
    been colored.

    Every time we process a vertex $v$ of degree $2$ with label $\{i,j\}$, we
    are extending the current partial star $3$-coloring to include an additional
    nontrivial block.   In each step, we are moving away an additional step from
    $B_1$ in a BFS fashion (thanks to the use of $Q$), and we only have to worry
    about exactly two colorings to be compatible.

    The algorithm uses time $O(|V|+|E|)$ to find all the blocks and check that
    each of them is a crown.   In the second step, when we use BFS from every
    nontrivial block, each vertex of the graph is traversed at most once, so
    this step also takes time $O(|V|+|E|)$.   In the coloring step, labeled
    vertices of degree $2$ are explored twice, but every other vertex is
    explored once.   Since we are using BFS for the coloring, the time of this
    step is also $O(|V|+|E|)$, which is also the overall running time of the
    complete algorithm.
\end{proof}

\section{Split graphs}
\label{sec:split}

A graph $G$ is a \textit{split graph} if its vertex set can be partitioned into
a clique $K$ and an independent set $S$. Such a partition $(K, S)$ is called a
\textit{split partition} of $G$. We denote by $\omega(G)$ the clique number of
$G$, that is, the maximum size of a clique in $G$. Throughout this paper,
whenever we fix a split partition $(K,S)$ of a split graph $G$, we assume that
$K$ is a maximal clique of $G$, so $|K| = \omega(G)$.

It is well known that split graphs are perfect, since they are both chordal and
co-chordal; in particular, their chromatic number equals their clique number:
\[
  \chi(G)=\omega(G)=|K| \qquad \text{for every split graph $G$.}
\]

Notice that for every $C_4$-free graph $G$ a proper coloring $c$ of $G$ is also
a star coloring of $G$ if and only if no \textbf{induced} path $P_4$ in $G$ is
bicolored.  This is true for split graphs, since they are precisely the $\{2K_2,
C_4, C_5\}$-free graphs \cite{foldes1977}. Thus, to verify that a proper
coloring $c$ of $G$ is a star coloring, it is enough to check that no induced
$P_4$ is bicolored.  In addition, it is easy to check that every induced path
$(x_1, x_2, x_3, x_4)$ in a split graph $G = (K, S)$ satisfies $x_1,x_4 \in S$
and $x_2,x_3 \in K$.

We start by delimiting the only two possible values for the star chromatic
number of a split graph. Although this fact is known, we include a proof for the
sake of completeness. 
\begin{proposition}
    \label{pro: split omega or omega+1}
    If $G = (K, S)$ is a split graph, then
    \(
        \x(G)\in \{\omega(G),\omega(G)+1\}.
    \)
\end{proposition}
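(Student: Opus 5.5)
The lower bound is immediate from the chain of inequalities \eqref{eq:inequalitiesChain}: $\omega(G)\le\x(G)$. So the whole task is to exhibit a star coloring of $G$ with $\omega(G)+1$ colors. Throughout I use the fixed split partition $(K,S)$ with $K$ a maximum clique, so $|K|=\omega(G)$.

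For the upper bound, color the vertices of $K$ injectively with colors $0,\dots,\omega(G)-1$. Then give every vertex of $S$ one new color $\omega(G)$. This is a proper coloring: $K$ is rainbow, $S$ is independent, and no vertex of $K$ shares the color of $S$. I expect no real obstacle here, but the coloring must be chosen so that the induced-$P_4$ check is trivial, which is why $S$ gets a single fresh color rather than reusing colors from $K$.

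To verify that it is a star coloring, I use the observation made just before the statement. Since split graphs are $C_4$-free, it suffices to check that no induced $P_4$ is bicolored. Moreover, every induced path $(x_1,x_2,x_3,x_4)$ has $x_2,x_3\in K$. These two vertices receive distinct colors, and both differ from the color $\omega(G)$ used on $x_1,x_4\in S$. Hence such a path uses at least three colors and is not bicolored. Therefore $\x(G)\le\omega(G)+1$, and combined with the lower bound this gives $\x(G)\in\{\omega(G),\omega(G)+1\}$.

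As a sanity check for the degenerate cases: if $S=\emptyset$, then $G$ is complete and the extra color is simply unused, which is still consistent with the claim since $\x(G)\le\omega(G)+1$.
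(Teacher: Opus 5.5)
Your proposal is correct and follows essentially the same argument as the paper. You color $K$ injectively, give all of $S$ one fresh color, and observe that every induced $P_4$ has its two middle vertices in $K$. This reduction to induced $P_4$'s is valid because split graphs are $C_4$-free, and the lower bound then comes from \eqref{eq:inequalitiesChain}.
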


\begin{proof}
    To prove that $\x(G) \le \omega(G) +1$, define a proper coloring $c$ of $G$
    by assigning a distinct color to each vertex of $K$ and using one additional
    color for all vertices in $S$.   Let $P = (x_1, x_2, x_3, x_4)$ be an
    induced path in $G$. Since $x_1, x_4 \in S$ and $x_2, x_3 \in K$, we have
    that $c(x_1) = c(x_4)$ and $c(x_2) \neq c(x_3)$, so $P$ is colored with at
    least three distinct colors. Hence, no induced $P_4$ in $G$ is bicolored
    under $c$, so $c$ is a star coloring and $\x(G) \le \omega(G)+1$. From here,
    the result is a consequence of \eqref{eq:inequalitiesChain}.
\end{proof} 

The following simple observation allows us to assume that neighborhoods
in a split minimal obstruction to star $k$-colorability are pairwise
incomparable under inclusion. As usual, for a given vertex $v \in V(G)$, $N(u)$
denotes its neighborhood.

\begin{proposition}
\label{pro:neighborhood-reduction}
    Let $G$ be a split graph with split partition $(S,K)$. If $u,v \in S$ are
    such that $N(u)\subseteq N(v)$, and $c$ is a star $k$-coloring of $G-u$,
    then $c$ can be extended to a star $k$-coloring of $G$ by setting
    $c(u)=c(v)$. Consequently,
    \[
        \x(G)=\x(G-u).
    \]
    In particular, if $G$ is a split minimal obstruction to star
    $k$-colorability, then the family of neighborhoods $\{N(x)\colon x\in S\}$
    is an antichain under set inclusion.
\end{proposition}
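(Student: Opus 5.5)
The plan is to verify directly that the extended coloring, with $c(u)=c(v)$, is a proper coloring of $G$ with no bicolored induced $P_4$. This suffices because, as observed above, split graphs are $C_4$-free, so only induced paths need to be checked. No new colors are introduced, so $c$ is still a $k$-coloring.

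Properness follows from the split structure. The only neighbors of $u$ lie in $K$, and $N(u)\subseteq N(v)$. Every $w\in N(u)$ is therefore adjacent to $v$, which gives $c(w)\neq c(v)=c(u)$.

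For the star condition, suppose some induced path $P=(x_1,x_2,x_3,x_4)$ of $G$ is bicolored. By the earlier remark, $x_1,x_4\in S$ and $x_2,x_3\in K$. Since $c$ is a star coloring of $G-u$, the path $P$ must contain $u$, so $u$ is an endpoint, say $u=x_1$.
\begin{itemize}
\item If $v\neq x_4$, replace $u$ by $v$ to get $P'=(v,x_2,x_3,x_4)$. This is a path because $x_2\in N(u)\subseteq N(v)$ and $v\in S$ is not adjacent to $x_4\in S$. It is a path in $G-u$ (possibly non-induced, which is harmless for star colorings), and it has the same colors as $P$ since $c(v)=c(u)$. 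So $P'$ is bicolored, a contradiction.
\item If $v=x_4$, then $v$ is adjacent to $x_2\in N(u)$. This contradicts $P$ being induced, since $x_2x_4$ would be a chord.
\end{itemize}
Hence $c$ is a star $k$-coloring of $G$.

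For the equality of star chromatic numbers, $\x(G-u)\le\x(G)$ holds by heredity. Applying the extension with $k=\x(G-u)$ gives $\x(G)\le\x(G-u)$.

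For the final claim, let $G$ be a minimal obstruction and suppose $N(u)\subseteq N(v)$ for distinct $u,v\in S$. By minimality, $G-u$ is star $k$-colorable, and then $G$ would be too, a contradiction. So the neighborhoods are pairwise incomparable; if $N(u)=N(v)$ for $u\neq v$, either inclusion already gives the contradiction.

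The only delicate point is the case $v=x_4$ in the path-replacement argument, which is handled by the inducedness of $P$.
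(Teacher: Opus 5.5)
Your proof is correct and complete. The paper states this proposition as a ``simple observation'' without proof, and your argument is the direct verification it leaves to the reader: properness from $N(u)\subseteq N(v)$, then replacing $u$ by $v$ in a bicolored induced $P_4$ (whose endpoints lie in $S$), with the case $v=x_4$ excluded because $P$ is induced.
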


As an immediate consequence of Proposition~\ref{pro:neighborhood-reduction}, for
any positive integer $k$ there are finitely many split minimal obstructions to
star $k$-colorability.

The following result provides a simple sufficient condition for a split graph to
satisfy $\x(G)=\omega(G)$.

\begin{proposition}
\label{pro:split-K-insaturated}
    Let $G$ be a split graph with split partition $(K, S)$. If there exists a
    vertex $y\in K$ that is not adjacent to any vertex in $S$, then
    $\x(G)=\omega (G)$. 
\end{proposition}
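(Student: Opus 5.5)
Since $\omega(G)\le \x(G)$ always holds by \eqref{eq:inequalitiesChain}, it suffices to exhibit a star coloring of $G$ with $\omega(G)=|K|$ colors. The natural construction modifies the one in the proof of Proposition~\ref{pro: split omega or omega+1}. Instead of introducing a new color for $S$, we recycle the color of the vertex $y$. Concretely, give the vertices of $K$ pairwise distinct colors $0,\dots,|K|-1$, with $c(y)=0$, and assign color $0$ to every vertex of $S$.

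The first step is to check that $c$ is proper. Inside $K$ all colors are distinct, and $S$ is independent. An edge between $s\in S$ and $x\in K$ could only be monochromatic if $x=y$. This cannot happen, because $y$ has no neighbor in $S$.

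The second step is to verify that no $P_4$ is bicolored. As observed before Proposition~\ref{pro: split omega or omega+1}, since split graphs are $C_4$-free, it is enough to consider induced paths $P=(x_1,x_2,x_3,x_4)$. Each such path has $x_1,x_4\in S$ and $x_2,x_3\in K$. Neither $x_2$ nor $x_3$ can be $y$, since each of them has a neighbor in $S$ along $P$. Hence $c(x_2)$ and $c(x_3)$ are two distinct colors, both different from $0$, while $c(x_1)=c(x_4)=0$. So $P$ uses three colors, and $c$ is a star coloring with $|K|=\omega(G)$ colors.

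There is no real obstacle here. The only delicate point is making sure the reduction to induced $P_4$'s is legitimate, which the paper has already justified, and noting that $y$ cannot occupy an interior position of an induced $P_4$.
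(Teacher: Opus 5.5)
Your proof is correct and uses the same construction as the paper: color $K$ injectively, give every vertex of $S$ the color of $y$, and check properness and the star condition. The only difference is that you verify the star condition explicitly, noting that the interior vertices of an induced $P_4$ lie in $K\setminus\{y\}$, whereas the paper simply asserts that any two color classes induce a star forest.
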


\begin{proof}
    Assign distinct colors to the vertices of $K$ and color every vertex of $S$
    with the color assigned to $y$. Since $y$ has no neighbors in $S$, this
    coloring is proper. Also, any two color classes induce a star forest, and
    hence the coloring is a star $\omega(G)$-coloring. Thus $\x(G)\le
    \omega(G)$, while the reverse inequality follows from $\omega(G)\le \x(G)$.
\end{proof} 

We next identify some configurations that prevent a split graph from having star
chromatic number equal to its clique number.

\begin{proposition}
\label{pro:omega-obstructions}
    Let $G$ be a split graph with split partition $(K,S)$ and let $\omega =
    \omega(G) = |K|$. If $G$ satisfies any of the following conditions, then 
    $\x(G)=\omega(G)+1$.
    \begin{enumerate}[label=(\roman*)]
        \item There exist vertices $x_1, x_2\in S$ such that the union of their
            neighborhoods contains $K$. 
        \item There is a partition $K=K_1\cup K_2$ and distinct vertices $u, v,
            w, z\in S$ such that
            \begin{itemize}
                \item $u$ is adjacent to all vertices of $K_1$, and
                \item there is an antimatching between $\{v,w,z\}$ and $K_2$,
                    that is, there exist pairwise distinct vertices $y_v, y_w,
                    y_z\in K_2$ such that each of $v, w, z$ is adjacent to every
                    vertex of $K_2$ except the corresponding one in $\{y_v, y_w,
                    y_z\}$;
            \end{itemize}
            no further restrictions are imposed on the adjacencies of $G$.

        \item There is a partition $K=K_1\cup K_2$ and distinct vertices $u,
            x_0, x_1, x_2, x_3, x_4\in S$ such that 
            \begin{itemize}
                \item $u$ is adjacent to all vertices of $K_1$, and 
                \item there exist distinct vertices $y_0, y_1, y_2, y_3, y_4\in
                    K_2$ for which, for every $i \in \{0, 1, 2, 3, 4\}$, $x_i$
                    is adjacent to every vertex of $K_2$ except $y_{i-2}$ and
                    $y_{i+2}$, where subscripts are considered modulo $5$;
            \end{itemize}
            no further restrictions are imposed on the adjacencies of $G$.
            (See Figure~\ref{fig:split_omegaminusthree}.)
    \end{enumerate}
\end{proposition}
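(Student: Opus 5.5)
By Proposition~\ref{pro: split omega or omega+1} we have $\x(G)\in\{\omega,\omega+1\}$, so in each case it suffices to show that $G$ has no star $\omega$-coloring. Suppose for a contradiction that $c$ is a star $\omega$-coloring of $G$.

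\textbf{A reformulation of star $\omega$-colorings.} Since $K$ is a clique on $\omega$ vertices, $c$ is a bijection from $K$ onto the color set. For $s\in S$ let $f(s)\in K$ be the unique vertex with $c(f(s))=c(s)$. Properness means that $f(s)$ is a non-neighbor of $s$. Now take $s,t\in S$ with $f(s)\neq f(t)$, $s\sim f(t)$ and $t\sim f(s)$. Then $(s,f(t),f(s),t)$ is a path whose colors are $c(s),c(t),c(s),c(t)$, so it is bicolored. Hence the whole proof rests on one key constraint:
\[
(\ast)\quad \text{for all } s,t\in S \text{ with } f(s)\neq f(t), \text{ not both } s\sim f(t) \text{ and } t\sim f(s).
\]
For each item I will exhibit two vertices of $S$ violating $(\ast)$.

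\textbf{Item (i).} We have $f(x_1)\notin N(x_1)$. Since $N(x_1)\cup N(x_2)\supseteq K$, this gives $x_2\sim f(x_1)$, and symmetrically $x_1\sim f(x_2)$. If $f(x_1)=f(x_2)$, this vertex would be adjacent to neither $x_1$ nor $x_2$, which is impossible. So $f(x_1)\neq f(x_2)$, and $(\ast)$ fails.

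\textbf{Item (ii).} As $u$ is complete to $K_1$, we get $f(u)\in K_2$.
\begin{itemize}
\item Each of $v,w,z$ misses only its own $y$ within $K_2$, and these $y$'s are distinct. So at most one of them has $f(u)$ as its $y$. Choose two, say $v,w$, for which $f(u)\neq y_v,y_w$; then $v\sim f(u)$ and $w\sim f(u)$.
\item Apply $(\ast)$ to the pair $u,v$. If $f(v)\in K_1$, then $u\sim f(v)$ and $f(v)\neq f(u)$, a violation. Also $f(v)\neq f(u)$, since $v\sim f(u)$. So $f(v)$ is a non-neighbor of $v$ in $K_2$, forcing $f(v)=y_v$.
\item Likewise $f(w)=y_w$. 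Since $y_v\ne y_w$, $v\sim y_w$ and $w\sim y_v$, the pair $v,w$ violates $(\ast)$.
\end{itemize}

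\textbf{Item (iii).} Again $f(u)\in K_2$.
\begin{itemize}
\item The vertex $x_i$ fails to be adjacent to $f(u)$ only if $f(u)\in\{y_{i-2},y_{i+2}\}$. If $f(u)=y_j$, this excludes only $i\in\{j-2,j+2\}$; if $f(u)$ is not one of the $y$'s, it excludes nothing.
\item In either case, for some $j$ both $x_{j-1}$ and $x_{j+1}$ are adjacent to $f(u)$.
\item The argument of item (ii) then gives $f(x_{j-1})\in\{y_{j+1},y_{j+2}\}$ and $f(x_{j+1})\in\{y_{j-1},y_{j-2}\}$ (indices modulo $5$). These are automatically distinct.
\item The non-neighbors of $x_{j-1}$ in $K_2$ are $y_{j+1},y_{j+2}$, so $x_{j-1}\sim f(x_{j+1})$. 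The non-neighbors of $x_{j+1}$ in $K_2$ are $y_{j-1},y_{j-2}$, so $x_{j+1}\sim f(x_{j-1})$.
\item Thus the pair $x_{j-1},x_{j+1}$ violates $(\ast)$.
\end{itemize}

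The only real work is the reduction to $(\ast)$ and the index bookkeeping in item (iii), namely choosing the pair at cyclic distance $2$. I expect that bookkeeping to be the main point to check carefully.
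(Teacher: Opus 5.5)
Your proof is correct and is essentially the paper's argument. Every contradiction the paper derives is a bicolored path of the form $(s,f(t),f(s),t)$, and your items (i) and (ii) follow it step for step; your version of (i) also sidesteps the paper's interval relabeling of $K$. In (iii), your uniform choice of the pair $x_{j-1},x_{j+1}$ streamlines the paper's split into $f(u)\notin\{y_0,\dots,y_4\}$ versus $f(u)=y_0$, with further subcases on $c(x_0)$. In fact the detour through $x_0$ in the second subcase is unnecessary: every admissible combination of colors on $x_1$ and $x_4$, which is your pair for $j=0$, already yields a bicolored path between them.
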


\begin{proof}
    Suppose for a contradiction that $G$ admits a star coloring $c$ with exactly
    $\omega$ colors. Since $K$ is a clique of size $\omega$, every vertex in $K$
    must receive a different color. In particular, every color of $c$ appears
    exactly once on $K$. Consequently, for every vertex $x\in S$, the color
    $c(x)$ must coincide with the color of a unique vertex of $K$. We will show
    that under any of the conditions $(i)-(iii)$, we can find a bicolored $P_4$,
    contradicting that $c$ is a star coloring. 

    \medskip
    \noindent\textbf{Case $(i)$.} Set $K=\{y_1,\ldots,y_{\omega}\}$. Since $K
    \subseteq N(x_1)\cup N(x_2)$, by relabelling the vertices of $K$ we may
    assume that there exist integers $i$ and $j$ such that $1\le i\le j\le
    \omega$, $N(x_1) = \{y_1,\dots,y_j\}$ and $N(x_2) = \{y_i,\dots,y_\omega\}$.
    Thus, $c(x_1)=c(y_p)$ for some $j<p\le \omega$ and $c(x_2)=c(y_q)$ for some
    $1\le q<i$, and therefore the path $(x_1, y_q, y_p, x_2)$ is bicolored, a
    contradiction. 

    \medskip
    \noindent\textbf{Case $(ii)$.} Let $y \in K$ be the unique vertex with $c(y)
    = c(u)$. Notice that, since $u$ is adjacent to all vertices in $K_1$, we
    must have $y\in K_2 \setminus N(u)$. Without loss of generality, assume $y
    \notin \{y_v,y_w\}$. If $c(v)=c(y')$ for some $y'\in K_1$, then $(u, y', y,
    v)$ is a bicolored $P_4$ in $G$, which is impossible. Thus, it must be the
    case that $c(v) = c(y_v)$, and similarly, $c(w) = c(y_w)$. However, in this
    case the path $(v, y_w, y_v, w)$ is a bicolored $P_4$, a contradiction. 

    \medskip
    \noindent\textbf{Case $(iii)$.} Let $X$ and $Y$ be the sets $X = \{x_0, x_1,
    x_2, x_3, x_4\}$ and $Y = \{y_0, y_1, y_2, y_3, y_4\}$. Let $y\in K$ be the
    unique yertex with $c(u) = c(y)$ so, as in the previous case, $y \in K_2
    \setminus N(u)$. Without loss of generality assume 
    \[
        c(y_0)=\red, \quad c(y_1)=\blue, \quad c(y_2)=\green, \quad
        c(y_3)=\purple, \quad c(y_4)=\orange.
    \] 

    \begin{itemize}
    \item First, suppose that $y\notin Y$, so $y$ is adjacent to all vertices in
        $X$. Note that for every $x\in X$ and every $z\in K_1$ we have $c(x)\ne
        c(z)$, since otherwise the path $(u, z, y, x)$ would be an induced
        bicolored $P_4$. Consequently, for each $i$ the color $c(x_i)$ must be
        the color of some yertex in $K_2$. By the neighborhood condition on
        $x_i$ in $K_2$, this forces
        \[
            c(x_i)\in\{c(y_{i-2}),c(y_{i+2})\}.
        \]
        Without loss of generality, assume $c(x_0)=c(y_2)=\green$.  We have
        $c(x_2)\in\{\red,\orange\}$. If $c(x_2)=\red$, then the path
        $(x_0,y_0,y_2,x_2)$ is bicolored; if $c(x_2)=\orange$, then
        $(x_0,y_4,y_2,x_2)$ is bicolored. In both cases we obtain a
        contradiction.

    \item Suppose now that $y\in Y$. By symmetry, we may assume $y=y_0$ and
        hence $c(u)=c(y_0)=\red$. For $i\in\{0,1,4\}$ and any $z\in K_1$, we
        have $c(x_i)\ne c(z)$; otherwise $(u, z, y_0, x_i)$ would be an induced
        bicolored $P_4$, which is impossible. Thus,
        \[
            c(x_0)\in \{\green,\purple\},\quad c(x_1)\in \{\purple,\orange\},
            \quad \text{and}\quad c(x_4)\in \{\blue,\green\}.
        \]

        \begin{itemize}
            \item If $c(x_0)=\green$, then $c(x_1)\ne\orange$, since otherwise
                $(x_0,y_4,y_2,x_1)$ is bicolored. Hence $c(x_1)=\purple$. Now if
                $c(x_4)=\blue$, then $(x_1, y_1, y_3, x_4)$ is bicolored, while
                if $c(x_4)=\green$, then $(x_1, y_2, y_3, x_4)$ is bicolored.
                Thus $c(x_4) \notin \{\blue,\green\}$, which is absurd.

            \item If $c(x_0)=\purple$, then $c(x_4)\ne\blue$, as otherwise
                $(x_0, y_1, y_3, x_4)$ is bicolored. Hence $c(x_4)=\green$. Now
                if $c(x_1)=\purple$, then $(x_1, y_2, y_3, x_4)$ is bicolored,
                while if $c(x_1)=\orange$, then $(x_1, y_2, y_4, x_4)$ is
                bicolored. Thus $c(x_1) \notin \{\purple,\orange\}$, a
                contradiction.
        \end{itemize}
    \end{itemize}
\end{proof}

\begin{figure}
    \centering
    \includegraphics[width=0.8\textwidth]{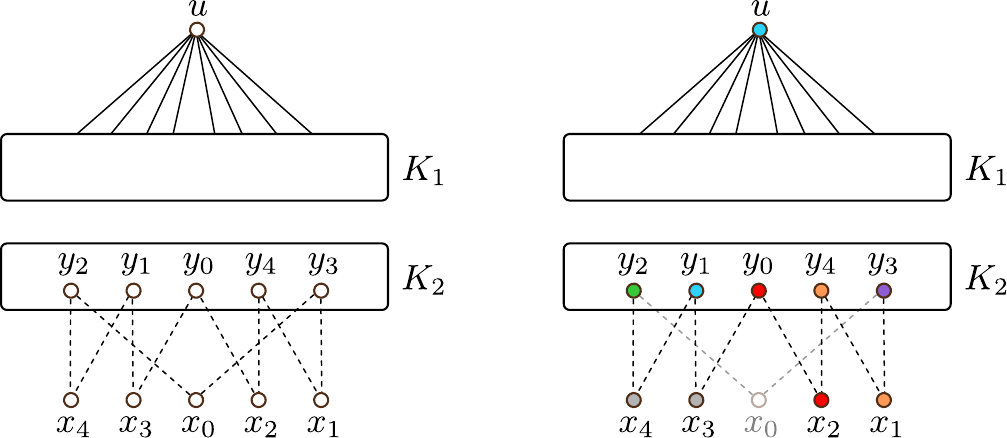}
    \caption{Left: the configuration described in item~(iii) of
    Proposition~\ref{pro:omega-obstructions}. Right: minimality of the
    obstruction where, without loss of generality, we delete the vertex $x_0$.
    By item~(i) of Proposition~\ref{pro:omega-obstructions}, we may assume that
    there exists a vertex $z\in K_1$ that is adjacent to neither $x_3$ nor $x_1$
    (otherwise $N(x_3)\cup N(x_1)$ would cover $K$), and a vertex $z'\in K_1$
    (possibly $z=z'$) adjacent to neither $x_4$ nor $x_1$. Accordingly, in the
    depicted coloring we set $c(x_3)=c(z)$ and $c(x_4)=c(z')$.} 
    \label{fig:split_omegaminusthree}
\end{figure}

By Proposition~\ref{pro:neighborhood-reduction},
Proposition~\ref{pro:split-K-insaturated}, and item~(iii) of
Proposition~\ref{pro:omega-obstructions}, we obtain the following result.

\begin{corollary}
    Let $G$ be a split graph with split partition $(K,S)$.   If there are at
    least three vertices of degree $\omega - 2$ in $S$, then it is possible to
    compute $\x$ in time $O(|V|+|E|)$.
\end{corollary}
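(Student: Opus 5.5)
The plan is to reduce the computation of $\x$ to a single linear-time test: whether some vertex of $K$ has no neighbour in $S$. By Proposition~\ref{pro: split omega or omega+1} we already know $\x(G)\in\{\omega,\omega+1\}$, so only one bit has to be decided. We first compute a split partition $(K,S)$ with $|K|=\omega$ in time $O(|V|+|E|)$, using the standard degree-sequence method for split graphs or a perfect elimination ordering. Then we compute all degrees and collect the set $S'$ of vertices $x\in S$ with $\deg(x)=\omega-2$. Each such $x$ misses exactly two vertices of $K$, so its non-neighbourhood $\overline{N}(x)=K\setminus N(x)$ is a pair. The hypothesis gives $|S'|\ge 3$.

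The key dichotomy is this.
\begin{itemize}
\item If some $y\in K$ has no neighbour in $S$, then Proposition~\ref{pro:split-K-insaturated} gives $\x(G)=\omega$.
\item Otherwise every vertex of $K$ has a neighbour in $S$, and we want to show $\x(G)=\omega+1$ using the three vertices of $S'$.
\end{itemize}

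To get the second case, take distinct $v,w,z\in S'$. We look at how the pairs $\overline{N}(v),\overline{N}(w),\overline{N}(z)$ sit inside $K$.
\begin{itemize}
\item If two of these pairs are disjoint, say for $v$ and $w$, then $N(v)\cup N(w)=K$. Item~(i) of Proposition~\ref{pro:omega-obstructions} then gives the conclusion.
\item Otherwise the three pairs pairwise intersect. Since they are pairs, either they share a common vertex (a star configuration), or they form a triangle $\{a,b\},\{b,c\},\{a,c\}$.
\item In the triangle case, put $K_2=\{a,b,c\}$ and $K_1=K\setminus K_2$. Then $v,w,z$ are each adjacent to all of $K_2$ except exactly one vertex, and these excluded vertices are distinct, which is an antimatching. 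To apply item~(ii) we need a fourth vertex $u\in S$ adjacent to all of $K_1$.
\end{itemize}

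This last requirement is the main obstacle. The hypothesis that every vertex of $K$ is covered by $S$ does not immediately give a single $u$ covering $K_1$. Here I would use Proposition~\ref{pro:neighborhood-reduction} to assume the neighbourhoods of $S$ form an antichain, and argue by cases on which vertex covers each of $a,b,c$. A vertex whose non-neighbourhood is disjoint from one of the pairs returns us to item~(i). The star configuration, and the case where three degree-$(\omega-2)$ vertices do not suffice, is where item~(iii) is needed: with five such vertices forming a $C_5$ pattern of non-neighbourhoods, item~(iii) applies.

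I honestly expect this combinatorial case analysis to be the hardest part. If the case analysis cannot be closed in general, the fallback is a weaker route to the linear-time claim. The vertices of $S'$ fix a bounded "core" of $K$ on which any $\omega$-colouring of $S$ is heavily constrained. Because every $x\in S$ must reuse the colour of a unique vertex of $K$, one can then decide colourability by checking, for each candidate colour of each $x$, whether a bicoloured $P_4$ through two $S$-vertices arises. This check is done with counting arrays over $K$ in $O(|V|+|E|)$ total.

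In either route the algorithm's work is a constant number of passes over adjacency lists, which gives the claimed running time.
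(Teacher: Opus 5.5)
Your key dichotomy is false: $N(S)=K$ does not force $\x(G)=\omega+1$. The net ($K_3$ with one pendant vertex at each clique vertex) is a counterexample.
\begin{itemize}
\item Here $\omega=3$, the three vertices of $S$ have degree $1=\omega-2$, and every vertex of $K$ has a neighbour in $S$.
\item Yet colouring $v_i$ with $i$, and the pendants at $v_1,v_2,v_3$ with $2,3,1$, is a star $3$-colouring.
\end{itemize}
More generally, suppose the non-neighbourhoods of the three chosen vertices $u,v,w$ form a triangle $\{b,c\},\{a,c\},\{a,b\}$. Then giving $u,v,w$ the colours of $b,c,a$ respectively is a star $\omega$-colouring of $G[K\cup\{u,v,w\}]$, so your one-bit test returns the wrong answer.

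The error shows up in your triangle case. With $K_2=\{a,b,c\}$, each of $u,v,w$ misses \emph{two} vertices of $K_2$, not one. So there is no antimatching, and item~(ii) cannot apply whatever fourth vertex you find. For instance, adding $x$ with $N(x)=K\setminus\{a,b,c\}$ gives a vertex complete to $K_1$, but by Proposition~\ref{pro:neighborhood-reduction} $\x$ stays $\omega$.

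You have swapped the roles of the two configurations. The antimatching lives in the star configuration. There the common non-neighbour $y$ gives $K_1=\{y\}$ and $K_2=K\setminus\{y\}$, and item~(ii) applies exactly when some vertex of $S$ is adjacent to $y$; otherwise Proposition~\ref{pro:split-K-insaturated} gives $\omega$. Item~(iii) plays no role.

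What is missing is the correct test for the triangle configuration, which is what the paper uses: classify each $x\in S$ according to which of $a,b,c$ it is adjacent to.
\begin{itemize}
\item If some $x$ is adjacent to two of them, say $a,b$, then $N(x)\cup N(w)=K$ for the vertex $w$ with non-neighbourhood $\{a,b\}$. Item~(i) then gives $\omega+1$.
\item Otherwise every $N(x)$ is contained in one of $N(u),N(v),N(w)$. By Proposition~\ref{pro:neighborhood-reduction}, the colouring above extends by copying colours, and $\x(G)=\omega$.
\end{itemize}
The paper also handles separately the case where $S$ has a vertex of degree $\omega-1$; there your dichotomy does hold, via item~(i).

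Your fallback does not close the gap either. Choosing a $K$-colour for each $x\in S$ so that no two $S$-vertices form a bicoloured $P_4$ is a global constraint problem. Nothing in the proposal explains why counting arrays solve it in linear time, and that is exactly the difficulty the degree hypothesis is supposed to remove.
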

\begin{proof}
    Let $v_1, \dots, v_\omega$ be the vertices in $K$ and assume that vertex
    $v_i$ receives color $i$ in a partial star coloring of $G$.   If there is a
    vertex $u$ in $S$ of degree $\omega - 1$, then $\chi_\star(G) = \omega$ if
    and only if no vertex in $S$ is adjacent to the only vertex in $K \setminus
    N(u)$. These two conditions can be verified in time $O(|V|+|E|)$ by
    exploring the neighborhoods of vertices in $S$, and an optimal star
    colouring of $G$ is obtained by using either the available color or a new
    color for every vertex in $S$.

    Suppose now that $\Delta = \omega - 2$.   We focus on three vertices, $u,v$
    and $w$ of degree $\omega - 2$ and test in time $O(|V|+|E|)$ whether there
    are two of them such that the union of their neighbourhoods contains $K$. If
    not, suppose first, and without loss of generality, that $v_\omega$ is not
    in the union of the neighborhoods of $u, v$ and $w$.   This means that each
    of $u, v$ and $w$ are adjacent to all vertices in $\{ v_1, \dots, v_{\omega
    - 1} \}$ but one; suppose without loss of generality that $u$ is not
    adjacent to $v_1$, $v$ is not adjacent to $v_2$, and $w$ is not adjacent to
    $v_3$. If some vertex $x$ in $S$ is adjacent to $v_\omega$, then by setting
    $K_1 = \{ v_\omega \}$ and $K_2 = \{ v_1, \dots, v_{\omega - 1} \}$ we have
    that $x$ is complete to $K_1$ and there is an antimatching between $\{u, v,
    w\}$ and $K_2$, so $\chi_\star(G) = \omega + 1$, and thus a new color can be
    used for every vertex in $S$.   If $v_\omega$ is not a neighbor of any
    vertex in $S$, then as in Proposition~\ref{pro:split-K-insaturated} we use
    the color $\omega$ for every vertex in $S$ to obtain a star
    $\omega$-coloring of $G$.   Else, $K$ is contained in the union of $N(u),
    N(v)$ and $N(w)$ but not in the union of two of them.   But this can only
    happen if $|N(u) \cap N(v) \cap N(w)| = \omega - 3$. Suppose without loss of
    generality that $K \setminus (N(u) \cap N(v) \cap N(w)) = \{ v_1, v_2, v_3
    \}$, and each of $u, v$ and $w$ is adjacent to exactly one of $\{ v_1, v_2,
    v_3 \}$ (again, without loss of generality, $uv_1, vv_2$ and $wv_3$ are
    edges of $G$).   Now, the algorithm classifies each of the rest of the
    vertices in $S$ depending on which of $v_1, v_2$ or $v_3$ it is adjacent to.
    If there is a vertex $x$ which is adjacent to at least $2$ of these
    vertices, then we are again in the case of the first item of
    Proposition~\ref{pro:omega-obstructions}, and a new color can be used to
    color every vertex in $S$.   Else, each of the vertices in $S$ is adjacent
    to at most one of $v_1, v_2$ or $v_3$, and thus, its neighborhood is
    contained in $N(u), N(v)$ or $N(w)$.   It follows from
    Proposition~\ref{pro:neighborhood-reduction} that in this case is enough to
    colour $G[K \cup \{ u, v, w \}]$ and then assign one of the colors used for
    $u, v$ or $w$ for each of the remaining vertices in $S$ according to which
    of $v_1, v_2$ or $v_3$ it is adjacent to (the algorithm has already done
    this classification).   A valid star $\omega$-coloring for $G[K \cup \{ u,
    v, w \}]$ assigns color $2$ to $u$, color $3$ to $v$ and color $1$ to $w$.
    Notice that the classification of vertices in $S$ can be done in time
    $O(|V|+|E|)$ by exploring the neighborhoods first of $u, v$ and $w$ and then
    of the rest of the vertices of $S$.   Similarly, the described coloring can
    be constructed in the same running time.

    Finally, if there are two vertices, say $u$ and $v$, such that $K \subseteq
    N(u) \cup N(v)$, then the first item of
    Proposition~\ref{pro:omega-obstructions} implies that $\chi_\star(G) =
    \omega + 1$, so a new color is used for every vertex in $S$.
\end{proof}

Two vertices $u$ and $v$ of a graph $G$ are said to be \textit{true twins} if
they are adjacent and $N(u) \setminus \{v\} = N(v) \setminus \{u\}$. The
following proposition shows that the creation of some true twins in a split
minimal obstruction to star $k$-coloring yields minimal obstructions to star
$(k+1)$-coloring.

\begin{proposition}
\label{pro:add true twins}
    Let $G$ be a split graph, with split partition $(K, S)$, and let $v$ be a
    vertex in $K$. Suppose that $G'$ is obtained from $G$ by adding a new vertex
    $v'$ such that $v$ and $v'$ are true twins in $G'$.  If $G$ is a minimal
    obstruction to star $k$-coloring, with $k = \omega(G)$, then $G'$ is a
    minimal obstruction to star $(k+1)$-coloring.
\end{proposition}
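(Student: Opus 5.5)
The proof has two parts: show that $G'$ is not star $(k+1)$-colorable, and show that every proper induced subgraph of $G'$ is.

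\textbf{Part 1: $G'$ is not star $(k+1)$-colorable.} First observe that $G'$ is split with partition $(K\cup\{v'\},S)$, so $\omega(G')=k+1$. Suppose $c'$ is a star $(k+1)$-coloring of $G'$.

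The clique $K\cup\{v'\}$ uses all $k+1$ colors, and $c'(v')$ appears on no other vertex of $K$. Any $x\in S$ with $c'(x)=c'(v')$ must be non-adjacent to $v'$, hence non-adjacent to $v$, since $v$ and $v'$ are true twins.

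Now recolor $G=G'-v'$: every vertex of color $c'(v')$ (these lie in $S$) receives the color $c'(v)$, and all other colors are kept. The result uses $k$ colors and is proper, because such an $x$ is not adjacent to $v$, and $v$ is the only vertex of $K$ colored $c'(v)$. Vertices of $S$ are independent, so no conflicts arise inside $S$.

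It remains to check that no $P_4$ is bicolored. Since split graphs are $C_4$-free, it suffices to check induced $P_4$'s $(x_1,y_1,y_2,x_4)$ with $x_1,x_4\in S$ and $y_1,y_2\in K$.
\begin{itemize}
\item If neither endpoint was recolored, the path was already bicolored under $c'$.
\item If an endpoint was recolored, its new color is $c'(v)$. Bicoloredness then forces $v\in\{y_1,y_2\}$, and since $x_1$ (say) is adjacent to $y_1$, we get $v=y_2$. In that case the other endpoint $x_4$ has color $c'(y_1)$, is adjacent to $v$, and is therefore not recolored. Replacing $v$ by $v'$ gives the path $(x_1,y_1,v',x_4)$ in $G'$, with colors $c'(v'),c'(y_1),c'(v'),c'(y_1)$. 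This path is bicolored under $c'$, which is a contradiction.
\end{itemize}
Hence $G$ would be star $k$-colorable, contradicting the assumption that $G$ is an obstruction. So $\x(G')\ge k+2$.

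\textbf{Part 2: minimality.} Let $u$ be any vertex of $G'$; we star $(k+1)$-color $G'-u$.
\begin{itemize}
\item If $u\in\{v,v'\}$, then $G'-u\cong G$. By Proposition~\ref{pro: split omega or omega+1}, $\x(G)\le k+1$.
\item Otherwise $G-u$ has a star $k$-coloring $c$ by minimality of $G$. Give $v'$ a new color $k+1$. This is proper, and any bicolored $P_4$ would avoid $v'$, whose color is unique. Thus it would be a bicolored $P_4$ of $G-u$, which is impossible.
\end{itemize}

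\textbf{Main obstacle.} The delicate step is the $P_4$ check after recoloring in Part 1. One must argue carefully that only the configuration with $v$ at an interior position can become bicolored, and that this case is exactly where the twin $v'$ provides a $P_4$ already bicolored under $c'$.
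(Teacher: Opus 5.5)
Your proof is correct and follows the paper's argument essentially step for step. The paper also merges the color of $v'$ into that of $v$, via $c(x)=\min\{c'(x),k\}$. It likewise shows that the only potentially bicolored induced $P_4$ has $v$ as an interior vertex, and replaces $v$ by $v'$ to obtain a $P_4$ that is bicolored under $c'$. Minimality is proved the same way, by giving $v'$ a fresh color. The only cosmetic difference is the deletion of $v$: you handle it via $G'-v\cong G$, whereas the paper treats it together with the other vertices using the minimality of $G$.
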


\begin{proof}
    Note that $G'$ is also a split graph with split partition $(K\cup \{v'\},
    S)$. Proceeding by contradiction, suppose that $G'$ admits a star
    $(k+1)$-coloring $c'$, and for each $i\in \{1, 2, \ldots, k+1\}$, let $V'_i$
    be the color class of color $i$ in $G'$; since $\omega (G')=k+1$, we have
    that $|V'_i\cap (K\cup \{v'\})|=1$.  Without loss of generality, assume that
    $c'(v)=k$ and $c'(v')=k+1$. Let $c \colon V(G) \to \{1, 2, \ldots, k\}$ be
    the coloring of $G$ defined as $c(x) = \min\{c'(x), k\}$ for each $x\in
    V(G)$.    
    
    Let us first show that $c$ is a proper coloring. For every $j\in \{1, 2,
    \ldots, k\}$, let $V_i$ be the color class of color $i$ in $G$ with respecto
    to $c$. Note that, for every $i \le k-1$, $V_i = V'_i$, so $V_i$ is an
    independent set. In addition, $V_k\cap K = \{v\}$, $(V_k\setminus \{v\})
    \subseteq S$, and because $v$ and $v'$ are true twins in $G'$, we have that
    $V_k$ is an independent set of $G$, and $c$ is a proper $k$-coloring of $G$.
 
    We now prove that $c$ is a star coloring. Let $P = (x_1, x_2, x_3, x_4)$ be
    an induced $P_4$ in $G$.  As we observed before, $x_1,x_4\in S$, and
    $x_2,x_3\in K$, and $P$ clearly is an induced $P_4$ also in $G'$.  Suppose
    that $P$ is bicolored in $G$; it follows that $x_1$ and $x_3$ have the same
    color, and $x_2$ and $x_4$ have the same color.  Since $V_k\cap K = \{v\}$,
    if $v \notin V(P)$, then no vertex in $P$ has color $k$, and $P$ is also
    bicolored in $G'$, which is impossible.  Hence, $v$ is a vertex of $P$, let
    us say, without loss of generality $v=x_2$. It follows that $x_4$ has color
    $k$, and $x_1$ and $x_3$ have color $i\neq k$ in $G$.  Observe that $x_4$
    has color $k+1$ in $G'$, otherwise $P$ is a bicolored $P_4$ in $G'$, which
    is absurd.  But, since $v$ and $v'$ are true twins in $G'$, we have that
    $(x_1, v', x_3, x_4)$ is a bicolored $P_4$, which is impossible.  Thus,
    there is no bicolored $P_4$ in $G$, and we conclude that $c$ is a star
    $k$-coloring of $G$, which is a contradiction.  Therefore, it must be the
    case that $G'$ is an obstruction to star $(k+1)$-coloring.

    To prove that $G'$ is minimal, consider an arbitrary vertex $x\in V(G')$. If
    $x=v'$, then $G'-v'=G$ which is star $(k+1)$-colorable. Now, assume that
    $x\neq v'$. Since $G$ is a minimal obstruction to star $k$-coloring, there
    is a star $k$-coloring $c^*$ of $G-x$, and then, $c^* \cup \{(v', k+1)\}$ is
    a star $(k+1)$-coloring of $G'-x$.  Therefore, $G'$ is a minimal obstruction
    to star $(k+1)$-coloring.
\end{proof} 

\begin{corollary}
    For every $k\geq 3$, there are at least as many minimal obstructions to a
    star $(k+1)$-coloring as there are minimal obstructions to a star
    $k$-coloring.
\end{corollary}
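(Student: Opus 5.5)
The plan is to build an injection from minimal obstructions to star $k$-coloring into minimal obstructions to star $(k+1)$-coloring. The map is the true-twin construction of Proposition~\ref{pro:add true twins}. As the surrounding section is about split graphs, I read the statement within the class of split graphs. By the consequence of Proposition~\ref{pro:neighborhood-reduction}, there are finitely many such obstructions for each $k$, so comparing counts is meaningful.

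The first step is to show that every split minimal obstruction $G=(K,S)$ to star $k$-coloring with $k\ge 3$ satisfies $\omega(G)=k$, which is what Proposition~\ref{pro:add true twins} requires.
\begin{itemize}
\item If $\omega(G)\ge k+1$, then $G$ contains $K_{k+1}$, which is itself not star $k$-colorable. Minimality then forces $G=K_{k+1}$.
\item Otherwise, by Proposition~\ref{pro: split omega or omega+1}, $\x(G)\in\{\omega,\omega+1\}$ and $\x(G)\ge k+1$, so $\omega(G)=k$.
\end{itemize}
So the obstructions are $K_{k+1}$ together with those of clique number exactly $k$. For $K_{k+1}$ I will map it to $K_{k+2}$, which is a minimal obstruction to star $(k+1)$-coloring. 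For every other obstruction $G$ I fix a vertex $v\in K$ and form $G'$ by adding a true twin of $v$. Proposition~\ref{pro:add true twins} gives that $G'$ is a minimal obstruction to star $(k+1)$-coloring, and $\omega(G')=k+1$.

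The second step, and the main obstacle, is injectivity: non-isomorphic $G_1,G_2$ must give non-isomorphic $G_1',G_2'$, and the image of $K_{k+1}$ must not collide with anything. The collision is easy. $K_{k+2}$ has clique number $k+2$, while every other image has clique number $k+1$.

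For the main case, my first attempt is to recover $G$ from $G'$ canonically. In $G'$, the twins $v,v'$ form a pair of true twins in the clique. I would argue that deleting either vertex of any true-twin pair inside the maximum clique of $G'$ gives a graph isomorphic to a fixed graph. To make this work, I would also choose $v$ canonically, for instance a vertex of $K$ of minimum degree. The difficulty is that $G'$ may contain several true-twin classes, and deleting from different classes could in principle give different graphs.

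If this canonical recovery proves awkward, I will use a cleaner counting argument instead. Define the map on isomorphism classes by $G\mapsto\{G'\}$ and show that each $(k+1)$-obstruction $H$ has at most one preimage up to isomorphism. Suppose $H-v'\cong G_1$ and $H-w'\cong G_2$ where $v',w'$ are twin vertices of $H$. If $v'$ and $w'$ lie in the same true-twin class, the two deletions are isomorphic, since swapping twins is an automorphism. If they lie in different classes, the two deletions need not be isomorphic. So I would refine the map by always duplicating a vertex from the largest true-twin class among vertices of $K$, breaking ties canonically. Then in $H$ this class is the unique largest one, because it grew by one and the other classes did not change, and deleting any member of it recovers $G$ up to isomorphism.

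This establishes the injection and hence the inequality. A separate subtlety is whether the statement is meant for all graphs rather than split graphs only. If so, I would apply the join with $K_1$, which sends star $k$-obstructions to star $(k+1)$-obstructions and is injective, as a fallback.
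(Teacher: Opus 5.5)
Your proposal is correct and follows the paper's argument. The paper states this corollary without proof as an immediate consequence of Proposition~\ref{pro:add true twins}, which is the same true-twin map on split obstructions that you use.

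You also supply two details the paper leaves implicit:
\begin{itemize}
\item the separate handling of $K_{k+1}$, where $\omega\neq k$ and so the proposition does not apply;
\item the largest-twin-class argument showing the map is injective up to isomorphism.
\end{itemize}
The universal-vertex map $G\mapsto G\oplus K_1$, which you keep only as a fallback for general graphs, would settle both points at once in the split setting too. It sends $K_{k+1}$ to $K_{k+2}$, and deleting any universal vertex recovers $G$.
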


As in Proposition~\ref{pro:add true twins}, the following result provides a
way to construct a minimal obstruction to star $(k+1)$-colorability from a
minimal obstruction to star $k$-colorability.

\begin{proposition}
\label{pro: split add universal vertex}
Let $G$ be a split graph with split partition $(K,S)$. Suppose that $G'$ is
obtained from $G$ by adding a new universal vertex $w$. If $G$ is a minimal
obstruction to star $k$-colorability, with $k=\omega(G)$, then $G'$ is a
minimal obstruction to star $(k+1)$-colorability.
\end{proposition}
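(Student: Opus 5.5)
We prove the two halves separately: first that $G'$ is not star $(k+1)$-colorable, then that every vertex-deleted subgraph of $G'$ is. First note that $G'$ is split with split partition $(K\cup\{w\},S)$, so $\omega(G')=k+1$.

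\emph{Non-colorability.} Suppose for contradiction that $c'$ is a star $(k+1)$-coloring of $G'$. Since $w$ is universal, its color appears on no other vertex; without loss of generality $c'(w)=k+1$. Then the restriction $c=c'|_{V(G)}$ uses only colors $\{1,\dots,k\}$. It is a proper coloring, and every bicolored $P_4$ of $G$ under $c$ would also be a bicolored $P_4$ of $G'$ under $c'$. Hence $c$ is a star $k$-coloring of $G$, contradicting that $G$ is an obstruction. (No appeal to the inducedness remark for split graphs is needed here; any path works.)

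\emph{Minimality.} Consider $G'-x$.
\begin{itemize}
\item If $x=w$, then $G'-w=G$. By Proposition~\ref{pro: split omega or omega+1}, $\x(G)\le \omega(G)+1=k+1$, so $G$ is star $(k+1)$-colorable.
\item If $x\ne w$, minimality of $G$ gives a star $k$-coloring $c^*$ of $G-x$. Extend it by setting $c(w)=k+1$. This is proper because color $k+1$ is used only on $w$. Any $P_4$ in $G'-x$ that contains $w$ has a vertex with the unique color $k+1$ and at least one other color, plus a third vertex. More precisely, in a bicolored path the two colors alternate, so each color appears twice, which is impossible for the unique color $k+1$. Any $P_4$ avoiding $w$ lies in $G-x$ and is not bicolored by the choice of $c^*$. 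Thus $G'-x$ is star $(k+1)$-colorable.
\end{itemize}

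The only step requiring care is the observation that a vertex with a unique color cannot lie on a bicolored $P_4$, since bicolored paths on four vertices use each color exactly twice. Everything else is routine.
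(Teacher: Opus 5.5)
Your proof is correct. The paper states this proposition without proof and presents it as the analogue of Proposition~\ref{pro:add true twins}; your argument is that analogue: restriction gives non-colorability, and you extend a star $k$-coloring of $G-x$ by a fresh color on $w$ for minimality, with Proposition~\ref{pro: split omega or omega+1} covering $G'-w=G$. The first half is even simpler than in the true-twin case, because a universal vertex's color cannot recur, so no color-merging is needed.
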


Obviously, any graph $G$ is star $2$-colorable if and only if it is a star
forest.   It is also clear that these two conditions occur if andd only if $G$
is a $\{K_3, P_4, C_4\}$-free graph.   Thus, there is a certifying algorithm
based in BFS to decide whether an input graph $G$ admits a star $2$-coloring
that runs in time $O(|V|+|E|)$.   It follows from Theorem~\ref{thm:chord-with-D}
that a split graph $G$ is star $3$-colorable if and only if $G$ is a $\{K_4,
\textnormal{kite}, \textnormal{gem}\}$-free graph. Moreover, by
Theorem~\ref{thm:chord-algo} we have that there is a $O(|V|+|E|)$-time
certifying algorithm to determine whether a split graph is star $3$-colorable.

Let $\mathcal{F}_4^{\mathrm{sp}}$ be the set depicted in
Figure~\ref{fig:split_mo_K=4}. Note that every graph in
$\mathcal{F}_4^{\mathrm{sp}}$ is split but, by
Proposition~\ref{pro:omega-obstructions}, does not admit a star $4$-coloring. On
the one hand, it is clear that $K_5$ is a minimal obstruction to star
$4$-coloring.  On the other hand, if any vertex $v$ of the clique is removed,
then, by Proposition ~\ref{pro: split omega or omega+1}, $G_i-v$ has a star
$4$-coloring, for $i\in \{1, \ldots, 5\}$.  For every $i \in \{1, 2, 3, 4\}$, if
any vertex $v$ of the stable set is removed, then there is at least one vertex
in the clique of $G_i-v$ that is no adjacent to any vertex in the stable set, so
by Proposition~\ref{pro:split-K-insaturated}, $G_i-v$ has a star $4$-coloring.
For $G_5$, suppose that $V(G_5) = \{v_1, v_2, v_3, v_4, u_1, u_2, u_3, u_4\}$,
where $K = \{v_1, \ldots, v_4\}$, $S = V(G) \setminus K$, and the unique vertex
adjacent to $u_1$ is $v_1$.  Note that, by
Proposition~\ref{pro:split-K-insaturated}, $G_5-u_1$ has a star $4$-coloring.
Now, assume that $u_2$ is adjacent to $v_2$ and $v_3$, $u_3$ is adjacent to
$v_2$ and $v_4$, and $u_4$ is adjacent to $v_3$ and $v_4$.  
We define the vertex coloring $c$ of $G_5-u_2$ as follows: $c(u_1) = 3$, $c(u_3)
= 1$, $c(u_4) = 2$, and for each $i \in \{1, 2, 3, 4\}$, $c(v_i) = i$.  It is
easy to prove that $c$ is a star $4$-coloring of $G_5-u_2$. Moreover, since
$G_5-u_2$, $G_5-u_3$ and $G_5-u_4$ are pairwise isomorphic, it follows that
every proper induced subgraph of $G_5$ is $4$-colorable.  Therefore, every graph
in $\mathcal{F}_4^{\mathrm{sp}}$ is a minimal obstruction to star
$4$-colorability within the class of split graphs.  In the next proposition we
prove that the graphs in $\mathcal{F}_4^{\mathrm{sp}}$ are the only split graphs
that are minimal obstructions to star $4$-colorability.

\begin{figure}[htb!]
    \centering
    \includegraphics[width=0.9\textwidth]{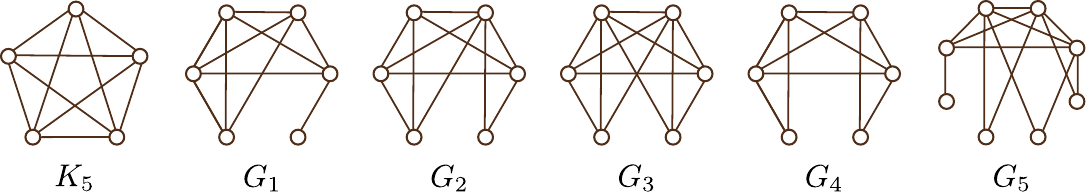}
    \caption{Minimal obstructions to star $4$-coloring in split graphs. } 
    \label{fig:split_mo_K=4}
\end{figure}

\begin{proposition}
\label{pro: star $4$-col iff F_4-free}
    If $G$ is a split graph, then $G$ is star $4$-colorable if and only if $G$
    is $\mathcal{F}_4^{\mathrm{sp}}$-free.
\end{proposition}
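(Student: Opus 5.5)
We prove the nontrivial direction; the forward direction holds because, as shown above, every graph in $\mathcal{F}_4^{\mathrm{sp}}$ fails to be star $4$-colorable. So let $G=(K,S)$ be an $\mathcal{F}_4^{\mathrm{sp}}$-free split graph. We reduce to a small number of configurations using the earlier propositions and then colour each one explicitly.

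\textbf{Reductions.} If $\omega(G)\le 3$, Proposition~\ref{pro: split omega or omega+1} gives $\x(G)\le 4$. If $\omega(G)\ge 5$, then $G$ contains $K_5$. Hence we may assume $\omega(G)=4$ and write $K=\{a,b,c,d\}$. We aim for a star $4$-coloring in which every vertex $v$ of $K$ receives its own colour, which I also denote $v$. We make three further assumptions.
\begin{itemize}
\item By Proposition~\ref{pro:neighborhood-reduction}, we may assume that the neighbourhoods $\{N(x)\colon x\in S\}$ form an antichain, arguing by induction on $|S|$.
\item By Proposition~\ref{pro:split-K-insaturated}, we may assume that every vertex of $K$ has a neighbour in $S$.
\item No $x\in S$ satisfies $N(x)=K$, since otherwise $K\cup\{x\}\cong K_5$ by maximality of $K$.
\end{itemize}
So every neighbourhood is a subset of $K$ of size $1$, $2$ or $3$.

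\textbf{No two neighbourhoods cover $K$.} Up to symmetry, the antichain pairs of such subsets whose union is $K$ are of four types: a $1$-set and the disjoint $3$-set, two disjoint $2$-sets, a $2$-set and a $3$-set meeting in one vertex, and two distinct $3$-sets. I expect these to be exactly $G_1,\dots,G_4$ in Figure~\ref{fig:split_mo_K=4}; they are the split graphs realising item~(i) of Proposition~\ref{pro:omega-obstructions} minimally. Since $G$ is $\mathcal{F}_4^{\mathrm{sp}}$-free, no two vertices of $S$ have neighbourhoods covering $K$.

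\textbf{No neighbourhood of size $3$.} Suppose some $u\in S$ has $N(u)=K\setminus\{d\}$. By the second reduction, $d$ has a neighbour $x\in S$. The antichain condition gives $N(x)\not\subseteq N(u)$, and $N(u)\cup N(x)=K$ then contradicts the previous step.

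\textbf{Remaining case analysis.} Now every neighbourhood has size $1$ or $2$.
\begin{itemize}
\item The $2$-sets pairwise intersect, so they form a star centred at some vertex of $K$ or a triangle.
\item A singleton $\{v\}$ forces every other neighbourhood to avoid $v$.
\item If the $2$-sets form the triangle on $\{b,c,d\}$, then $a$ must be covered by a singleton $\{a\}$. Then $G$ contains $G_5$, which realises item~(ii) of Proposition~\ref{pro:omega-obstructions} with $K_1=\{a\}$.
\item If the $2$-sets form a triangle avoiding some vertex, then covering that vertex requires a singleton. This again yields $G_5$ or contradicts the covering assumption.
\item Otherwise the $2$-sets form a star centred at, say, $a$, together with singletons on the vertices of $\{b,c,d\}$ not used by the star. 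Since all of $K$ is covered, the star is nonempty.
\end{itemize}

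\textbf{Explicit colourings.} In the star case, let $D$ be the set of vertices $x\in\{b,c,d\}$ for which $\{a,x\}$ is a neighbourhood. Give each vertex with neighbourhood $\{a,x\}$ a colour $y\in\{b,c,d\}\setminus\{x\}$. Choose these colours so that no induced $P_4$ $(s,y,x,t)$ is bicoloured, where $s$ and $t$ are the two end vertices in $S$. Give a singleton-$\{v\}$ vertex a colour $y\ne a$ of a vertex it is not adjacent to. Such choices exist by a short check for $|D|\in\{1,2,3\}$. For instance, when $D=\{b,c,d\}$, colour the vertices with neighbourhood $\{a,b\},\{a,c\},\{a,d\}$ with $c,d,b$ respectively, cyclically. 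Since all induced $P_4$'s have their ends in $S$ and their middle in $K$, the verification reduces to checking pairs of vertices of $S$, and each check is finite.

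I expect the main obstacle to be this last step: making the case enumeration genuinely exhaustive, in particular handling multiple vertices of $S$ sharing a neighbourhood consistently, and exhibiting a valid colouring in each surviving case.
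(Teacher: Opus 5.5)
Your reductions (to $\omega=4$, $N(S)=K$, the antichain condition), the exclusion of $3$-element neighbourhoods, and your use of $G_1,\dots,G_5$ all match the paper's proof. Splitting the remainder by whether the pairwise intersecting $2$-sets form a star or a triangle is only a repackaging of the paper's split by the number of degree-$1$ vertices in $S$.

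There is, however, a false step: ``Since all of $K$ is covered, the star is nonempty.'' Take the neighbourhoods $\{a\},\{b\},\{c\},\{d\}$. This family contains no $2$-set at all, yet it passes every one of your filters: it is an antichain, it covers $K$, no two members cover $K$, and it contains no $G_5$. So $K_4$ with one pendant vertex at each clique vertex is $\mathcal{F}_4^{\mathrm{sp}}$-free, but your proof never colours it; this is the paper's Case~1 with $|S|=4$. Your singleton rule refers to a star centre $a$ that does not exist here. An arbitrary choice of non-neighbour colours can also fail: giving $x_a$ colour $b$ and $x_b$ colour $a$ makes $(x_a,a,b,x_b)$ bicoloured. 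Colour cyclically instead, $x_a\mapsto b$, $x_b\mapsto c$, $x_c\mapsto d$, $x_d\mapsto a$. Then $(x_u,u,v,x_v)$ is bicoloured only if each of $u,v$ is the successor of the other, which cannot happen on a $4$-cycle.

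Your worry about several vertices of $S$ sharing a neighbourhood is unnecessary. The antichain condition is with respect to $\subseteq$, so Proposition~\ref{pro:neighborhood-reduction} already removes equal neighbourhoods. The surviving cases are therefore exactly four specific graphs: $|D|=1,2,3$ and the four singletons.

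To close your ``short check'', write $x_{ab}$, $x_c$, etc.\ for the vertices with neighbourhoods $\{a,b\}$, $\{c\}$, and so on. A path $(s,y,z,t)$ with $s,t\in S$ is bicoloured if and only if $c(s)=z\in N(t)\setminus N(s)$ and $c(t)=y\in N(s)\setminus N(t)$. With this criterion one checks:
\begin{itemize}
\item for $D=\{b\}$, the colouring $x_{ab}\mapsto c$, $x_c\mapsto d$, $x_d\mapsto b$ works;
\item for $D=\{b,c\}$, the colouring $x_{ab}\mapsto c$, $x_{ac}\mapsto d$, $x_d\mapsto b$ works;
\item for $D=\{b,c,d\}$, your cyclic colouring works.
\end{itemize}
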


\begin{proof}
    Let $G$ be a split graph with split partition $(K,S)$. Since each element of
    $\mathcal{F}_4^{\mathrm{sp}}$ is a minimal obstruction to star
    $4$-coloring, it follows that if $G$ is star $4$-colorable, then $G$ is
    $\mathcal{F}_4^{\mathrm{sp}}$-free.

    Now, suppose that $G$ is $\mathcal{F}_4^{\mathrm{sp}}$-free. To prove that
    $G$ is $4$-star colorable, it is enough to show that every connected
    component of $G$ is, so we can assume that $G$ is connected. Since $K_5$ is
    not an induced subgraph of $G$, it follows that $|K|\leq 4$. By
    Proposition~\ref{pro: split omega or omega+1}, if $|K|\leq 3$, then $G$ is
    star $4$-colorable. Hence, assume that $|K|=4$, and $K=\{0,1,2,3\}$. By
    Proposition~\ref{pro:split-K-insaturated}, if there is a vertex in $K$ that
    is not adjacent to any vertex in $S$, then $G$ is star $4$-colorable. Thus,
    we can assume that $N(S)=K$. Since $K_5$ is not an induced subgraph of $G$
    and $G$ is connected, we have that $1\leq d_G (x)\leq 3$ for every $x\in S$.
    This implies $|S|\geq 2$. Moreover, by
    Proposition~\ref{pro:neighborhood-reduction}, we can assume that there are
    no distinct vertices $u$ and $v$ in $S$ such that $N(u)\subseteq N(v)$, that
    is, the family of neighborhoods forms an antichain.

    Observe that there is no vertex in $S$ with degree 3. Proceeding by
    contradiction, suppose that there is $x\in S$ with $d_G(x)=3$. Since
    $|S|\geq 2$, $N(S)=K$, and the family of neighborhoods is an antichain, for
    any $y \in S \setminus \{x\}$ the subgraph induced by $K\cup\{x,y\}$ in $G$
    is isomorphic to $G_{i}$, for some $i\in \{1,2,3\}$, which is impossible. 

    Let $S_1$ be the subset of vertices of $S$ with degree 1. We consider the
    following possibilities.

    \noindent\textbf{Case 1}. There are two distinct vertices $x_0,x_1\in S_1$.
    Since the family of neighborhoods is an antichain, we can assume that
    $0x_0,1x_1\in E(G)$, and no other vertex in $S$ is adjacent to $0$ nor $1$
    in $G$. Notice that, $1\leq |S\setminus \{x_0,x_1\}|\leq 2$. If $|S\setminus
    \{x_0,x_1\}|=2$, then $S=S_1$ and we can assume without loss of generality
    that $S=\{x_0,x_1,x_2,x_3\}$ and $x_i$ is adjacent to $i$ in $G$, for every
    $i\in \{0,1,2,3\}$. We define the following coloring $c$: for every $i\in \{
    0,1,2,3\}$, set $c(i)=i$ and $c(x_i)=i+1$, addition taken modulo $4$. It is
    readily seen that $c$ is a star $4$-coloring of $G$. Suppose now that
    $|S\setminus \{x_0,x_1\}|=1$, and let $x_2$ be the unique vertex in
    $S\setminus \{x_0,x_1\}$. It follows that $N(x_2)=\{2,3\}$. We define the
    following coloring $c$: $c(x_0)=2$, $c(x_1)=0$, $c(x_3)=1$ and, for every
    $i\in \{0,1,2,3\}$, $c(i)=i$. It is straightforward to prove that $c$ is a
    star $4$-coloring. See Figure~\ref{fig: case 1 split_mo_K=4}.

    \noindent\textbf{Case 2}. $|S_1|=1$. Let $x_0$ be the unique vertex in
    $S_1$. Suppose, without loss of generality, that $x_0$ is adjacent to $0$.
    Since the family of neighborhoods is an antichain, and $N(S)=K$, we have
    that $2\leq |S\setminus\{x_0\}|\leq 3$, otherwise $G$ is isomorphic to
    $G_1$. Moreover, if $|S\setminus\{x_0\}|=3$, then $G$ is isomorphic to
    $G_{5}$. Suppose that $S\setminus \{x_0\}=\{x_1,x_2\}$. Since the family of
    neighborhoods is an antichain, and there is no vertex of degree 3, we can
    assume that $N(x_1)=\{1,2\}$, and $N(x_2)=\{2,3\}$. By coloring the vertices
    of $G$ as in Figure~\ref{fig: case 2 split_mo_K=4}, we obtain a star
    $4$-coloring of $G$.

    \noindent\textbf{Case 3}. $S_1=\varnothing$. It implies that every vertex in
    $S$ has degree 2. Note that for every vertex $x\in S$, if there is $y\in S$
    such that $N(y)=K\setminus N(x)$, then the subgraph of $G$ induced by
    $K\cup\{x,y\}$ is isomorphic to $G_{4}$, which is impossible. Hence, for
    every vertex $x\in S$, there is no vertex $y\in S$ such that
    $N(y)=K\setminus N(x)$. Moreover, since $N(S)=K$, and the family of
    neighborhoods is an antichain, we obtain that $|S|=3$. Therefore, there is
    only a single possible configuration, to which a star $4$-coloring can be
    assigned, as shown in Figure~\ref{fig: case 3 split_mo_K=4}.
\end{proof}

\begin{figure}[htb!]
    \centering
    \begin{subfigure}[b]{0.42\linewidth}
        \centering
            \includegraphics[width=\textwidth]{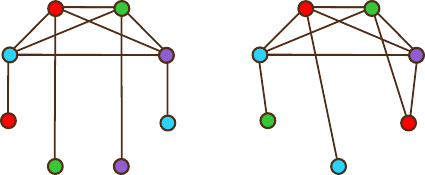}
            \caption{Case 1} 
        \label{fig: case 1 split_mo_K=4}
    \end{subfigure}
    \hfill
        \begin{subfigure}[b]{0.17\linewidth}
        \centering
            \includegraphics[width=\textwidth]{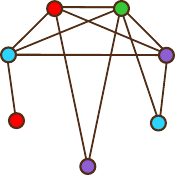}
            \caption{Case 2} 
        \label{fig: case 2 split_mo_K=4}
    \end{subfigure}
    \hfill
        \begin{subfigure}[b]{0.17\linewidth}
        \centering
            \includegraphics[width=\textwidth]{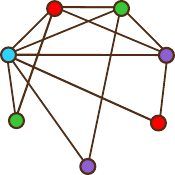}
            \caption{Case 3}
        \label{fig: case 3 split_mo_K=4}
    \end{subfigure}
        \caption{Cases of Proposition~\ref{pro: star $4$-col iff F_4-free}.}
        \label{fig: cases split_mo_K=4}
\end{figure}

As usual, the knowledge of the complete set of minimal obstructions to a split
graph to be star $4$-colorable can be used to find a certifying algorithm to
test whether an input split graph has star chromatic number at most $4$. Notice
that similar algorithms to that in Corollary~\ref{cor:split-4-alg} can be easily
stated for star $2$- and $3$-coloring of split graphs.

\begin{corollary}
\label{cor:split-4-alg}
    There is a certifying algorithm that runs in time $O(|V|+|E|)$ to test
    whether an input split graph $G$ has a star $4$-coloring.
\end{corollary}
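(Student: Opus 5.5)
The algorithm follows the proof of Proposition~\ref{pro: star $4$-col iff F_4-free} step by step. At each point where the proof uses $\mathcal{F}_4^{\mathrm{sp}}$-freeness, the algorithm either extracts a member of $\mathcal{F}_4^{\mathrm{sp}}$ as a no-certificate or continues; the colorings the proof constructs serve as yes-certificates. Throughout, every check touches each adjacency list a bounded number of times, so the total cost is $O(|V|+|E|)$.

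\textbf{Step 1: split partition and clique size.} Compute a split partition $(K,S)$ with $K$ a maximum clique. A perfect elimination ordering gives a maximum clique in linear time, and the degree-sequence method of Hammer and Simeone also works. If $|K|\ge 5$, return five vertices of $K$ as a $K_5$ no-certificate. If $|K|\le 3$, return the coloring from the proof of Proposition~\ref{pro: split omega or omega+1}: distinct colors on $K$ and one extra color on $S$. So assume $|K|=4$ and work componentwise. If some vertex of $K$ has no neighbour in $S$, output the coloring of Proposition~\ref{pro:split-K-insaturated}.

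\textbf{Step 2: reduce $S$ to representatives.} Every $x\in S$ has degree between $1$ and $3$, since degree $4$ would give a $K_5$. Encode $N(x)$ as a $4$-bit mask, in constant time per vertex after reading its list. Bucket the vertices of $S$ by mask; there are at most $14$ buckets. Keep one representative per mask, and among masks keep only the inclusion-maximal ones, which is a constant-size computation. By Proposition~\ref{pro:neighborhood-reduction}, any star $4$-coloring of $G[K\cup R]$, where $R$ is the set of representatives, extends to $G$ by copying colors. Each vertex $u$ gets the color of a representative $v$ with $N(u)\subseteq N(v)$, and this assignment takes linear time.

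\textbf{Step 3: decide the reduced instance.} Now $G[K\cup R]$ has bounded size: $|R|$ is bounded by the width of the Boolean lattice on four elements. So we can run the case analysis of the proof, or simply brute force, in constant time. The case analysis handles each situation as follows.
\begin{itemize}
\item A degree-$3$ representative together with any other representative yields one of $G_1,G_2,G_3$, which is returned.
\item Two representatives with complementary degree-$2$ neighbourhoods yield $G_4$.
\item The configuration of Case~2 with three further vertices yields $G_5$.
\item A lone degree-$1$ vertex with a single other vertex yields $G_1$.
\item Otherwise, Cases~1--3 supply explicit star $4$-colorings of the reduced graph, which extend as in Step~2.
\end{itemize}
Any no-certificate found is already an induced subgraph of $G$, because representatives are actual vertices of $G$.

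\textbf{Main obstacle.} The delicate part is making sure every branch that fails actually returns a member of $\mathcal{F}_4^{\mathrm{sp}}$, rather than appealing to freeness. If the hand case analysis leaves a gap, the fallback is brute force. Search the constant-size graph $G[K\cup R]$ for a star $4$-coloring; if none exists, search its constant-size induced subgraphs for a minimal one. Proposition~\ref{pro: star $4$-col iff F_4-free} guarantees that this minimal subgraph lies in $\mathcal{F}_4^{\mathrm{sp}}$. This fallback is still constant time, so it keeps the linear bound.
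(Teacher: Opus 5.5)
Your proposal is correct and follows essentially the same route as the paper: find a split partition with $|K|=\omega(G)$, return $K_5$ if $|K|\ge 5$, color directly if $|K|\le 3$, label each vertex of $S$ by its neighbourhood in $K=\{0,1,2,3\}$, keep only the maximal labels (Proposition~\ref{pro:neighborhood-reduction}), and resolve the constant-size remaining instance by the case analysis of the star $4$-colorability characterization, then copy colors to dominated vertices in $O(|V|)$ time. Your brute-force fallback on $G[K\cup R]$ is a harmless extra safeguard the paper does not need, because its checks already produce either a coloring or an explicit member of $\mathcal{F}_4^{\mathrm{sp}}$; those checks are an unused element of $K$, two labels whose union is $K$, and the three cases by the number of singleton labels.
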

\begin{proof}
    We can order the vertices of $G$ according to their degrees in time
    $O(|V|)$, which in turn enables us to find a split partition $(K,S)$ of $G$
    that maximizes the cardinality of $K$ within the same time bound. If $K$ has
    more than $4$ vertices, we return a copy of $K_5$ as a no-certificate. If
    $K$ has fewer than 4 vertices, we assign a different color to each vertex in
    $K$, and a single additional color to every vertex in $S$.   Otherwise, set
    $K=\{0,1,2,3\}$, and use $O(|V|+|E|)$ time to classify each vertex in $S$
    according to its neighborhood, this is, each vertex in $S$ receives a label
    from the power set $\mathcal{P}(\{0, 1, 2, 3\})$. In time $O(1)$ we choose
    the maximal labels among the ones used and proceed to perform the following
    checks:
    \begin{itemize}
        \item If there is an element of $\{0, 1, 2, 3\}$ which is not used
            by any label, we color all vertices in $S$ with such an element
            and return the resulting coloring as a yes-certificate.
        \item If there are two labels such that their union is the set $\{
            0, 1, 2, 3\}$, we return two vertices in $S$ having those labels,
            together with $K$, as a no-certificate.
        \item If there are two labels of cardinality $1$, we follow the first
            case of Proposition~\ref{pro: star $4$-col iff F_4-free} to
            construct a star $4$-coloring of $G$ and return it as a
            yes-certificate.
        \item If there is only one label of cardinality $1$, we follow the
            second case of Proposition~\ref{pro: star $4$-col iff F_4-free}
            to either construct a star $4$-coloring of $G$, which is returned
            as a yes-certificate, or a copy of $G_1$ or $G_5$ in
            Figure~\ref{fig:split_mo_K=4}, which is returned as a
            no-certificate.
        \item If there are no labels of cardinality $1$, we follow the third
            case of Proposition~\ref{pro: star $4$-col iff F_4-free} to either
            construct a star $4$-coloring of $G$, which is returned as a
            yes-certificate, or a copy of $G_4$ in
            Figure~\ref{fig:split_mo_K=4}, which is returned as a
            no-certificate.
    \end{itemize}
    Since the number of different labels is constant, all the aforementioned
    tests can be performed in time $O(1)$ for the chosen labels, but we may
    need additional $O(|V|)$ time to color all the vertices of $G$ when a
    star $4$-coloring exists.
\end{proof}

We now turn our attention to split graphs admitting a star $5$-coloring.  Our
treatment of this case is very similar to the previous one, albeit more
complicated.   On the bright side, we obtain a similar algorithmic result that
let us propose a conjecture regarding the star coloring of split graphs.

Set the family $\mathcal{F}_5^{\mathrm{sp}} = \{ K_6\}\cup\{ J_i\colon\ 1\leq
i\leq 12\}$ depicted in Figure~\ref{fig:split_mo_K=5}. We first establish that
these graphs are minimal obstructions, and then show that they characterize star
$5$-colorability in split graphs.

\begin{figure}[htb!]
    \centering
    \includegraphics[width=1\textwidth]{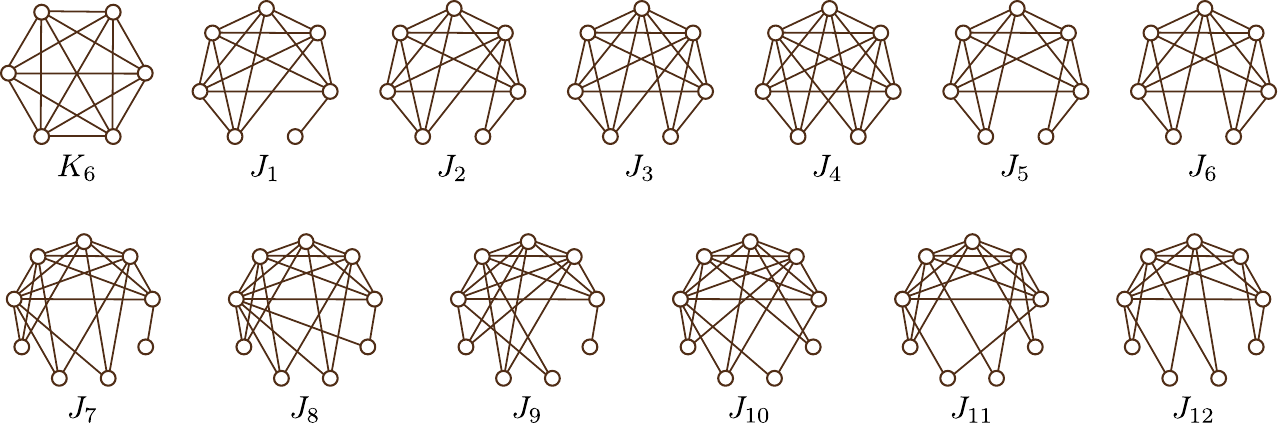}
    \caption{Minimal obstructions to star $5$-coloring in split graphs.} 
    \label{fig:split_mo_K=5}
\end{figure}

\begin{lemma}
\label{lemma:split-5-minimal}
    Every graph in $\mathcal{F}_5^{\mathrm{sp}}$ is a minimal obstruction to
    star $5$-colorability. 
\end{lemma}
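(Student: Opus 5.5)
The plan mirrors the argument already given for $\mathcal{F}_4^{\mathrm{sp}}$: establish non-colorability for every member, then minimality by deleting vertices one at a time. Non-colorability is immediate for $K_6$. For each $J_i$ with $i\in\{1,\dots,12\}$, we fix the split partition $(K,S)$ with $|K|=5$. Then we exhibit one of the three configurations of Proposition~\ref{pro:omega-obstructions}:
\begin{itemize}
\item two vertices of $S$ whose neighborhoods cover $K$ (item (i));
\item a vertex $u$ complete to $K_1$ together with an antimatching of three vertices onto $K_2=K\setminus K_1$ (item (ii));
\item the $C_5$-type antimatching of item (iii), where $K_1=\varnothing$ is impossible since $u$ must exist. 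In that case $K_2=K$ with five vertices $x_i$ of degree $3$, plus $u$ complete to some $K_1$. For $|K|=5$, item (iii) forces $K_2=K$, so presumably that family member has $u$ adjacent to nothing relevant, or instead $K_1$ is a single vertex and the $C_5$-pattern lives on... Since $|Y|=5$ forces $K_2=K$, item (iii) applies with $K_1=\varnothing$ and $u$ arbitrary (vacuously complete), which the proof of item (iii) still handles.
\end{itemize}
In every case, Proposition~\ref{pro:omega-obstructions} gives $\x(J_i)=6$. Several of the $J_i$ should also arise from members of $\mathcal{F}_4^{\mathrm{sp}}$ via Proposition~\ref{pro:add true twins} or Proposition~\ref{pro: split add universal vertex}. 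For those we get non-colorability and minimality at once, and I would dispatch them first to shrink the list.

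For minimality of the remaining graphs, we delete a vertex $v$ and produce a star $5$-coloring of $J_i-v$. If $v\in K$, the clique number drops to $4$ and Proposition~\ref{pro: split omega or omega+1} gives $\x\le 5$. If $v\in S$ and some vertex of $K$ loses all its neighbors in $S$, Proposition~\ref{pro:split-K-insaturated} applies. Also, if after deletion some neighborhood in $S$ is contained in another, Proposition~\ref{pro:neighborhood-reduction} lets us reduce to a smaller graph that we color directly. The genuinely remaining cases are those where $N(S)=K$ still holds after deletion. For these, we give explicit colorings, up to the automorphisms of $J_i$, in the style used for $G_5$ and in Figure~\ref{fig:split_omegaminusthree}. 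To build them, we color $K$ with $\{0,\dots,4\}$ and assign each $x\in S$ a color $c(y_x)$ with $y_x\in K\setminus N(x)$. We then check that no induced path $(x,y,y',x')$ with $x,x'\in S$ and $y,y'\in K$ is bicolored. This amounts to verifying, for every pair $x,x'\in S$, that we never have simultaneously $y_{x'}\in N(x)$ and $y_x\in N(x')$.

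The main obstacle is the case analysis for the graphs built on the $C_5$ antimatching of item (iii) and on the item (ii) antimatchings. There, deleting one $x_i$ leaves no vertex of $K$ unsaturated, so a coloring must be found by hand. I would first try the reduction in the caption of Figure~\ref{fig:split_omegaminusthree}: color the remaining $x_j$ with colors of vertices of $K_1$ nonadjacent to them, using item (i)'s failure to guarantee such vertices exist. When $K_1$ is too small, I would fall back to choosing $y_x$ according to the cyclic pattern $y_{x_i}=y_{i+2}$, adjusted at the deleted index, and verify the pairwise condition above directly.
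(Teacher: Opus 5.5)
Your plan is essentially the paper's proof. The paper obtains $K_6,J_4,J_8$ by adding a universal vertex to $K_5,G_3,G_5$, and $J_1,J_2,J_3,J_5,J_6,J_9,J_{12}$ by adding a true twin to $G_1,G_2,G_3$ or $G_5$ (Propositions~\ref{pro:add true twins} and~\ref{pro: split add universal vertex}); only $J_7,J_{10},J_{11}$ need Proposition~\ref{pro:omega-obstructions}, with Proposition~\ref{pro: split omega or omega+1} for deletions in $K$ and explicit colorings for deletions in $S$ (your pairwise test on $y_x,y_{x'}$ is exactly the right check for those). However, handling the twin and universal-vertex cases first is necessary, not merely convenient, because your claim that every $J_i$ exhibits one of the three configurations is false: $J_9$, with $S$-neighbourhoods $\{0,1,2\},\{0,1,3\},\{4\},\{2,3\}$ (that is, $G_5$ with a twin added inside $K_2$), contains none of them, and item (iii) never occurs here at all, since for $|K|=5$ the vertices $x_0,x_2$ of that pattern already satisfy $N(x_0)\cup N(x_2)=K$ and so induce a $J_6$.
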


\begin{proof}
    For every $i\in \{1, 2, 3, 5, 6, 9, 12\}$, the graph $J_i$ can be obtained
    by adding a true twin to $G_1$, $G_2$, $G_3$, or $G_5$, whereas $K_6$,
    $J_4$, and $J_8$ can be obtained by adding a universal vertex to $K_5$,
    $G_3$, and $G_5$, respectively.  By Propositions~\ref{pro:add true twins}
    and \ref{pro: split add universal vertex}, $K_6$ and $J_i$ are minimal
    obstructions to star $5$-colorability, for every $i\in \{1, 2, 3, 4, 5, 6,
    8, 9, 12\}$.  Moreover, by Proposition~\ref{pro:omega-obstructions}, $J_7$,
    $J_{10}$, and $J_{11}$ are obstructions to star $5$-colorability.  Note
    that, for every $i\in \{7, 10, 11\}$, if some vertex $v$ in $K$ is removed
    from $J_i$, then by Proposition~\ref{pro: split omega or omega+1},
    $J_i-v$ is star $5$-colorable.  Additionally, Figure~\ref{fig:J7 J10 J11
    mo split K=5} depicts a star $5$-coloring for every vertex-deleted subgraph
    of $J_i$ when the deleted vertex belongs to $S$.  Hence, $J_i$ is a minimal
    obstruction to star $5$-colorability, for every $i \in \{7, 10, 11\}$.
    Therefore, every graph in $\mathcal{F}_5^{\mathrm{sp}}$ is a minimal
    obstruction to star $5$-colorability.
\end{proof}

\begin{figure}[t!]
    \centering
    \includegraphics[width=0.82\linewidth]{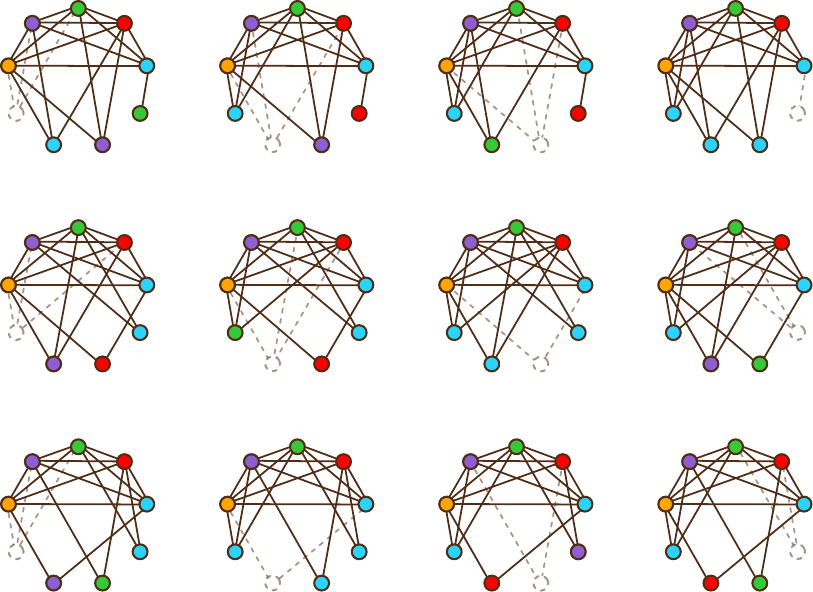}
    \caption{Star $5$-colorings of each subgraph of $J_7$, $J_{10}$ or $J_{11}$
    obtained by deleting a vertex of the set $S$.  From top to bottom, the rows
    correspond to $J_7$, $J_{10}$ and $J_{11}$, respectively; in each drawing
    the grey vertex (and its incident grey edges) indicates the deleted vertex.}
\label{fig:J7 J10 J11 mo split K=5}
\end{figure}

\begin{proposition}
\label{pro: star $5$-col iff F_5-free}
    If $G$ is a split graph, then $G$ is star $5$-colorable if and only if $G$
    is $\mathcal{F}_5^{\mathrm{sp}}$-free. 
\end{proposition}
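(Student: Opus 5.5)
The plan follows the proof of Proposition~\ref{pro: star $4$-col iff F_4-free}. The forward direction is immediate from Lemma~\ref{lemma:split-5-minimal}, since every member of $\mathcal{F}_5^{\mathrm{sp}}$ is a minimal obstruction and star $5$-colorability is hereditary.

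For the converse, let $G$ be an $\mathcal{F}_5^{\mathrm{sp}}$-free split graph with split partition $(K,S)$.
\begin{itemize}
  \item \textbf{Reductions.} We may assume $G$ is connected. As $G$ is $K_6$-free, $|K|\le 5$. If $|K|\le 4$, Proposition~\ref{pro: split omega or omega+1} gives $\x(G)\le 5$, so assume $K=\{0,1,2,3,4\}$.
  \item \textbf{Normalizing $S$.} By Proposition~\ref{pro:split-K-insaturated} we may assume $N(S)=K$. By Proposition~\ref{pro:neighborhood-reduction} we may assume the neighborhoods $\{N(x)\colon x\in S\}$ form an antichain. Since $K_6$ is excluded, $1\le d(x)\le 4$ for every $x\in S$.
  \item \textbf{Target.} Show that every such configuration not containing some $J_i$ admits an explicit star $5$-coloring. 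Each color of $K$ appears exactly once on $K$, so a coloring is the assignment of each $x\in S$ to a non-neighbor in $K$ whose color it takes. The conditions to avoid are exactly those of Proposition~\ref{pro:omega-obstructions}: for $x,x'\in S$ with $c(x)=c(y)$ and $c(x')=c(y')$, we need that $y'\notin N(x)$ or $y\notin N(x')$ fails, i.e.\ not both $y'\in N(x)$ and $y\in N(x')$ hold.
\end{itemize}

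The case analysis proceeds by decreasing maximum degree in $S$.
\begin{itemize}
  \item \textbf{Degree $4$.} Suppose some $x\in S$ has $d(x)=4$, missing $y$. Then $N(S)=K$ and the antichain condition force another vertex adjacent to $y$, producing one of the two-vertex configurations in item~(i). These should correspond to the twin/universal extensions of $G_1,G_2,G_3$, namely among $J_1,\dots,J_6$.
  \item \textbf{Covering pairs.} If two vertices of $S$ have neighborhoods covering $K$, item~(i) yields one of the small $J_i$. So assume no two neighborhoods cover $K$.
  \item \textbf{Sizes $1$, $2$, $3$.} Split further by the number $|S_1|$ of degree-$1$ vertices, as in Cases~1--3 of Proposition~\ref{pro: star $4$-col iff F_4-free}.
  \begin{itemize}
    \item Degree-$3$ vertices behave like the degree-$2$ vertices in the $\omega=4$ case.
    \item Degree-$1$ and degree-$2$ vertices are tightly constrained by the pairwise non-covering condition: a degree-$3$ neighborhood and a degree-$2$ neighborhood must intersect.
    \item The patterns from items~(ii) and~(iii), an antimatching or a $C_5$-type configuration together with a vertex complete to the rest, should match $J_7$, $J_{10}$, $J_{11}$ and the twin versions of $G_5$.
  \end{itemize}
  \item \textbf{Explicit colorings.} In every surviving configuration, exhibit a star $5$-coloring. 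Work modulo the symmetry group $S_5$ acting on $K$, listing the antichains of subsets of $K$ that cover $K$ with pairwise non-covering members.
\end{itemize}

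The main obstacle is this enumeration. The antichains can be numerous, especially when all vertices of $S$ have degree $3$, where up to $\binom{5}{3}=10$ incomparable triples are possible and item~(iii) enters. I would try to tame this in two ways:
\begin{itemize}
  \item \textbf{Complements.} Use the complement sets $K\setminus N(x)$, of size $2$ for degree-$3$ vertices. The non-covering condition says that these missing pairs pairwise intersect. Pairwise intersecting $2$-subsets of a $5$-set form either a star or a triangle, which immediately limits the structure.
  \item \textbf{Orientations.} A coloring is then an orientation choosing one missing element per vertex, and we need that no two vertices $x,x'$ choose $y,y'$ with $y'\in N(x)$ and $y\in N(x')$.
\end{itemize}
A short greedy argument over stars or triangles of missing pairs should produce the coloring, or else locate item~(ii)/(iii), i.e.\ some $J_i$. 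Mixed degree cases would be handled similarly, via the missing sets of sizes $2$, $3$, $4$.
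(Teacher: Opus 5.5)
You have a genuine gap. Your reductions coincide with the paper's, and your complement observation is in fact the engine of its proof. Pairwise-intersecting missing pairs is exactly the paper's property $|N(x)\cap N(y)|=2$ for $x,y\in S_3$. Your triangle/star/single/none split is its case split on $|N(S_3)|\in\{5,4,3\}$ versus $S_3=\varnothing$.

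However, the converse \emph{is} the enumeration you defer. In each case one must invoke specific members of $\mathcal{F}_5^{\mathrm{sp}}$ to cut $S$ down:
$J_5,J_7,J_8$ force $|S_3|=2$ in the star case;
$J_9$ appears when a degree-$1$ vertex is adjacent to the common element of the missing pairs;
$J_{11}$ and $J_{12}$ appear later.
Only then can one exhibit colorings of the maximal surviving configurations. The claims you mark with ``should correspond'', ``should match'' and ``should produce'' are precisely the content of the proof. Beyond the reductions, nothing is established.

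More importantly, the mechanism you propose for finishing cannot work as stated. You expect a greedy orientation argument either to color $G$ or to ``locate item (ii)/(iii), i.e.\ some $J_i$''. But items (i)--(iii) of Proposition~\ref{pro:omega-obstructions} are sufficient for $\chi_\star(G)=\omega+1$, not necessary.

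Take $S=\{a,b,y,w\}$ with $N(a)=\{0,1,2\}$, $N(b)=\{0,1,3\}$, $N(y)=\{4\}$, $N(w)=\{2,3\}$. This is $J_9$, the true-twin extension of $G_5$. No two neighborhoods cover $K$, and no choice of $u$ and $K_1\subseteq N(u)$ yields an antimatching. Let $\varphi(s)$ denote the clique vertex whose color $s$ takes. Checking the four options $(\varphi(a),\varphi(b))\in\{3,4\}\times\{2,4\}$ shows that every one forces a bicolored $P_4$. So your dichotomy gets stuck exactly on obstructions that come only from Proposition~\ref{pro:add true twins}, and these must be searched for explicitly.

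Two smaller points. First, item (iii) never arises at $\omega=5$: it forces $K_1=\varnothing$ and a $5$-cycle of missing pairs, which contains disjoint pairs and hence a covering pair. Second, after excluding covering pairs there are at most four degree-$3$ vertices, so the worry about ten incomparable triples is moot.
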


\begin{proof}
    By Lemma~\ref{lemma:split-5-minimal}, every star $5$-colorable graph is
    $\mathcal{F}_5^{\mathrm{sp}}$-free. Conversely, suppose that $G$ is a
    $\mathcal{F}_5^{\mathrm{sp}}$-free graph with split partition $(K, S)$. To
    prove that $G$ is star $5$-colorable, it is enough to check that each of its
    connected components is, so we can assume that $G$ is connected.
    
    Since $K_6 \in \mathcal{F}_5^{\mathrm{sp}}$, it follows that $|K| \leq 5$
    and each vertex $x\in S$ satisfies $1\leq d_G (x)\leq 4$.   By
    Proposition~\ref{pro: split omega or omega+1}, if $|K|\leq 4$, then $G$ is
    star $5$-colorable.   Hence, assume that $|K|=5$, and set $K=\{0,1,2,3,4\}$.
    By Proposition~\ref{pro:split-K-insaturated} we may assume $N(S)=K$, so
    $|S|\ge 2$.   Moreover, by Proposition~\ref{pro:neighborhood-reduction}, we
    may assume that there are no distinct vertices $u,v\in S$ such that
    $N(u)\subseteq N(v)$, that is, the family of neighborhoods forms an
    antichain; we refer to this as \emph{the antichain condition} on $S$.   

    Observe that, since $|S|\geq 2$, $K=N(S)$, and $S$ satisfies the antichain
    condition, if there is $x\in S$ with $d_G(x)=4$, then for any other vertex
    $y$ in $S$, the subgraph induced by $K\cup\{x,y\}$  is isomorphic to
    $J_{i}\in \mathcal{F}_5^{\mathrm{sp}}$, for some $i \in \{1,2,3,4\}$, a
    contradiction.   Hence, for every vertex $x\in S$ we have $1\leq d_G(x)\leq
    3$. Let  $S_i$ be the set of vertices in $S$ with degree $i$ in $G$. 

    We now show that: 
    \begin{equation}
    \label{eq:remark-S3}
        \textrm{for every pair of distinct vertices $x, y \in S_3$, we have
        $|N(x)\cap N(y)|=2$.}
    \end{equation}
    Indeed, as $|K|= 5$ and $S$ satisfies the antichain condition, we have that
    $1\leq |N(x)\cap N(y)|\leq 2$. Moreover, if $|N(x)\cap N(y)|=1$, then $K\cup
    \{x,y\}$ induces a subgraph isomorphic to $J_6$, which is impossible. Thus,
    property~\eqref{eq:remark-S3} holds.  We distinguish some different cases. 

    \medskip
    \noindent\textbf{Case 1.} Suppose $N(S_3)=K$.   Clearly, $S_1=\varnothing$
    because $S$ satisfies the antichain condition.   Moreover, by
    property~\eqref{eq:remark-S3} we have $|S_3|\ge 3$.   First, suppose that
    $|S_3| \ge 4$, and let $x_1, x_2, x_3, x_4\in S_3$. Assume without loss of
    generality that $N(x_1) = \{0,1,2\}$ and $N(x_2) = \{0,1,3\}$.  It is
    straightforward to check that, without loss of generality, either $N(x_3) =
    \{0,1,4\}$ and $N(x_4) = \{1,2,3\}$, or $N(x_3) = \{1,2,3\}$ and $N(x_4) =
    \{0,1,4\}$.   However, in both cases we have $|N(x_3)\cap N(x_4)| = 1$,
    contradicting the property \eqref{eq:remark-S3}.   Thus, it must be the case
    that $|S_3|=3$, and since $N(S_3) = K$, we may relabel $S_3 = \{x_1, x_2,
    x_3\}$ in such a way that $N(x_1) = \{0, 1, 2\}$, $N(x_2) = \{0, 1, 3\}$,
    and $N(x_3) = \{0, 1, 4\}$.   Therefore, since $S$ satisfies the antichain
    condition, and $G$ is a $J_5$-free graph, we have that $S_2 = \varnothing$,
    and hence $S = \{x_1, x_2, x_3\}$.  In this case $G$ is star $5$-colorable,
    as can be checked in Figure~\ref{fig: case 1 split_K=5}.

    \begin{figure}[htb!]
     \centering
     \begin{subfigure}[b]{0.3\textwidth}
         \centering
             \includegraphics[width=0.6\textwidth]{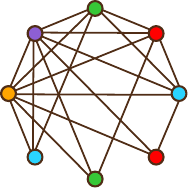}
             \caption{Case 1} 
         \label{fig: case 1 split_K=5}
     \end{subfigure}
     \begin{subfigure}[b]{0.3\textwidth}
         \centering
             \includegraphics[width=0.6\textwidth]{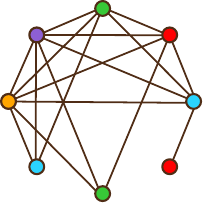}
             \caption{Case 2.1} 
         \label{fig: case 21 split_K=5}
     \end{subfigure}
     \begin{subfigure}[b]{0.3\textwidth}
         \centering
             \includegraphics[width=0.6\textwidth]{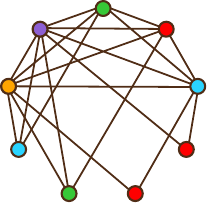}
             \caption{Case 2.2}
         \label{fig: case 22 split_K=5}
     \end{subfigure}

    \caption{Cases 1 and 2 of Proposition~\ref{pro: star $5$-col iff
    F_5-free}.}
    \label{fig: cases 1 and 2 split_K=5}
    \end{figure}

    \medskip
    \noindent\textbf{Case 2.} Suppose $|N(S_3)|=4$.   Without loss of generality
    assume that $K \setminus N(S_3) = \{4\}$, and let $y\in S\setminus S_3$ be
    such that $4 \in N(y)$, so $y\in S_1\cup S_2$.   Let $x_1,x_2\in S_3$. By
    property~\eqref{eq:remark-S3} we can assume without loss of generality that
    \[N(x_1)=\{0,1,2\} \ \text{ and }\ N(x_2)=\{0,1,3\}.\] We claim that
    $|S_3|=2$.   Otherwise, let $x_3$ be another vertex in $S_3$. Since $S$
    satisfies the antichain condition, $N(x_1)=\{0,1,2\}$, $N(x_2)=\{0,1,3\}$,
    and $|N(s)| = 4$, we must have $2,3 \in N(x_3)$.   Moreover, by symmetry of
    vertices $0$ and $1$, we may assume $N(x_3)=\{0,2,3\}$.   If $y\in S_1$,
    then $G[K\cup \{x_1,x_2,x_3,y\}]\cong J_7$, which is impossible.   Thus,
    $y\in S_2$.  If there is $j \in \{1,2,3\} \cap N(y)$, then $G[K \cup \{y,
    x_{4-j}\}] \cong J_5$, but we are assuming $G$ is $J_5$-free. Hence
    $N(y)=\{0,4\}$, but then $G[K\cup \{x_1,x_2,x_3,y\}]\cong J_8$, an absurd.
    Therefore, $|S_3|=2$.
    
    \begin{itemize}
        \item[\textbf{2.1.}] Suppose $y\in S_1$. As $S$ satisfies the antichain
        condition, we have $S_1=\{y\}$ and $|S_2| \le 1$. If $|S_2| = 1$, let us
        say $S_2 = \{w\}$, then $N(w) = \{2,3\}$; however, in this case $G[K\cup
        \{x_1,x_2,y,w\}]\cong J_9$, a contradiction. Assume therefore that $S_2
        = \varnothing$, and assign to $G$ the star $5$-coloring depicted in
        Figure~\ref{fig: case 21 split_K=5}. 

        \item[\textbf{2.2.}] Suppose $y\in S_2$.   Due to the antichain
        condition, we have $S_1 = \varnothing$.   Moreover, since $G$ is a
        $J_5$-free graph, we must have $\{2,3\} \cap N(y) = \varnothing$. Notice
        that $|S_2|\le 2$, because the only possible neighborhoods for vertices
        in $S_2$ are $\{0,4\}$ and $\{1,4\}$.   A star $5$-coloring of $G$ when
        $|S_2|=2$ is shown in Figure~\ref{fig: case 22 split_K=5}; a star
        $5$-coloring of $G$ for the case $|S_2|=1$ can be obtained as a
        restriction of this same coloring.
    \end{itemize}

    \begin{figure}[htb!]
    \centering
     \begin{subfigure}[b]{0.3\textwidth}
         \centering
             \includegraphics[width=0.6\textwidth]{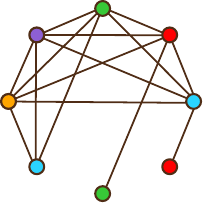}
             \caption{Case 3.1} 
         \label{fig: case 31 split_K=5}
     \end{subfigure}
     \begin{subfigure}[b]{0.3\textwidth}
         \centering
             \includegraphics[width=0.6\textwidth]{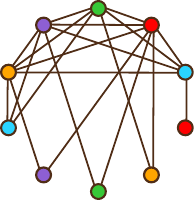}
             \caption{Case 3.2} 
         \label{fig: case 32 split_K=5}
     \end{subfigure}
     \begin{subfigure}[b]{0.3\textwidth}
         \centering
             \includegraphics[width=0.6\textwidth]{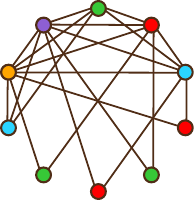}
             \caption{Case 3.3}
         \label{fig: case 33 split_K=5}
     \end{subfigure}
        \caption{Case 3 of Proposition~\ref{pro: star $5$-col iff F_5-free}.}
        \label{fig: cases 3 split_K=5}
    \end{figure}
    
    \noindent\textbf{Case 3}. Suppose $|N(S_3)|=3$.   We have $|S_3|=1$ because
    $S$ satisfies the antichain condition.   Let $x$ be the only vertex in
    $S_3$, and assume, without loss of generality, that $N(x)=\{ 0, 1, 2\}$.
    Notice that no vertex in $S$ is adjacent to both $3$ and $4$, otherwise $G$
    would have $J_5$ as an induced subgraph.   Let $w,y\in S$ be such that $3\in
    N(w)$ and $4\in N(y)$.   
    \begin{itemize}
        \item[\textbf{3.1.}] If $y,w\in S_1$, then $S=\{x,y,w\}$.   A star
        $5$-coloring of $G$ is depicted in
        Figure~\ref{fig: case 31 split_K=5}.   
        
        \item[\textbf{3.2.}] Suppose that exactly one of $y$ and $w$ belongs to
        $S_1$. Without loss of generality, assume $S_1 = \{y\}$, and $w\in S_2$.
        Since $S$ satisfies the antichain condition, any vertex $z\in S_2$ must
        satisfy $N(z)=\{i,3\}$, for some $i\in \{0,1,2\}$. Figure~\ref{fig: case
        32 split_K=5} depicts a star $5$-coloring of $G$ when $|S_2|=3$; the
        cases when $|S_2|\le 2$ are easy consequences of this case.

        \item[\textbf{3.3.}] If cases 3.1 and 3.2 do not occur, thus
        $S_1=\varnothing$ and $y,w\in S_2$. Without loss of generality, assume
        that $N(w)=\{0,3\}$ and $N(y)=\{1,4\}$. If there exists a vertex $z\in
        S_2$ such that $2\in N(z)$, then $G[K\cup\{x,w,y,z\}]\cong J_{11}$, a
        contradiction. Thus no vertex of $S_2$ is adjacent to $2$. By the
        antichain condition, it follows that $|S_2|\le 4$, and every vertex
        $z\in S_2\setminus\{w,y\}$ satisfies either $N(z)=\{0,4\}$ or
        $N(z)=\{1,3\}$. A star $5$-coloring of $G$ when $|S_2|=4$ is depicted in
        Figure~\ref{fig: case 33 split_K=5}; the cases $|S_2|\le 3$ are obtained
        by restricting this coloring.
    \end{itemize}
    
    \begin{figure}[t!]
        \centering
        \includegraphics[width=0.8\linewidth]{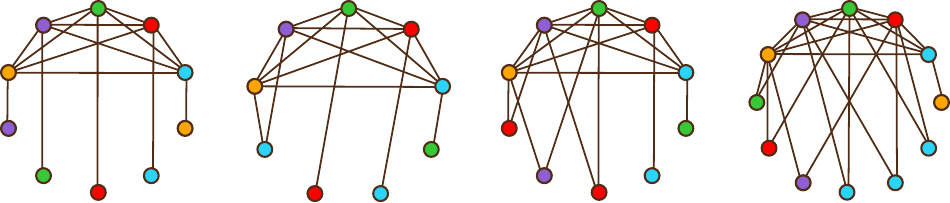}
        \caption{Case 4 when $S_1\neq \varnothing$ of Proposition~\ref{pro:
        star $5$-col iff F_5-free}.}
        \label{fig: case 4 S_1 non empty split_K=5}
    \end{figure}

    \noindent\textbf{Case 4.} Suppose $S_3=\varnothing$, so $S=S_1\cup S_2$.
    Since $S$ satisfies the antichain condition, for each $z\in S_1$ no other
    vertex in $S$ is adjacent to the only neighbor of $z$.   Figure~\ref{fig:
    case 4 S_1 non empty split_K=5} depicts the four possible cases when
    $S_1\ne\varnothing$, together with the corresponding star $5$-coloring; for
    each of these cases, the maximum number of vertices in $S_2$ is considered.
    
    Now, assume that $S_1= \varnothing$, so $S = S_2$. Thus, each vertex $s \in
    S$ has a 2-subset of $\{0, 1, 2, 3, 4, 5\}$ as neighborhood; let us denote
    by $x_{ij}$ a vertex in $S$ whose neighborhood is $\{i,j\}$.   Fix $i\in K$
    such that $|N(i)\cap S|$ is maximum.   We can assume without loss of
    generality that $i=0$, so $2 \le |N(0)\cap S| \le 4$.   We distinguish the
    following cases.   

    \begin{itemize}
        \item[\textbf{4.1.}] Suppose $|N(0)\cap S| \ge 3$.   If there exist
        $x_{pq}, x_{\ell t} \in S$ such that $\{p, q, \ell, t\} = K \setminus
        \{0\}$, then either $x_{0p}, x_{0q} \in S$ or $x_{0\ell}, x_{0t} \in S$.
        By symmetry, we can assume the former case happens, but then $G[K \cup
        \{x_{pq}, x_{\ell t}, x_{0p}, x_{0q}\}] \cong J_{12}$, which is
        impossible.   

        Thus, we may assume that all vertices in $S$ not adjacent to $0$, are
        adjacent to a same vertex in $K$, say $1$.   Hence $S\subseteq
        \{x_{01},x_{02},x_{03},x_{04},x_{12},x_{13},x_{14}\}$. Figure~\ref{fig:
        case 41 split_K=5} depicts a star $5$-coloring for the case when the
        equality holds; any other cases follows easily from this one.

        \item[\textbf{4.2.}] Suppose $|N(0)\cap S| = 2$.   Observe that if there
        are pairwise distinct vertices $p, q, r \in K$ such that $x_{pq},
        x_{pr}, x_{qr} \in S$, then we also have $x_{\ell t} \in S$, where
        $\{\ell,t\} = K \setminus \{p, q, r\}$.   Nevertheless, in such a case
        $G[K \cup \{x_{pq}, x_{pr}, x_{qr}, x_{\ell t}\}] \cong J_{12}$, a
        contradiction. Therefore, up to relabeling the vertices in $K$, we may
        assume 
        \(
            S \subseteq \{x_{01},x_{12},x_{23},x_{34},x_{40}\}.
        \)
        See Figure~\ref{fig: case 42 split_K=5} for a star $5$-coloring for the
        case when equality holds; any other cases follows easily from this one.
    \end{itemize}

    \begin{figure}[htb!]
     \centering
     \begin{subfigure}[b]{0.35\textwidth}
         \centering
             \includegraphics[width=0.58\textwidth]{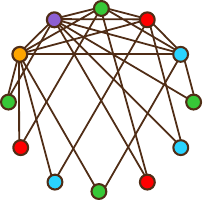}
             \caption{Case 4.1} 
         \label{fig: case 41 split_K=5}
     \end{subfigure}
          \begin{subfigure}[b]{0.35\textwidth}
         \centering
             \includegraphics[width=0.55\textwidth]{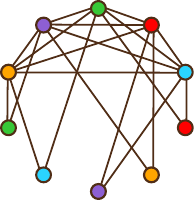}
             \caption{Case 4.2}
         \label{fig: case 42 split_K=5}
     \end{subfigure}
        \caption{Cases 4.1 and 4.2 of Proposition~\ref{pro: star $5$-col
        iff F_5-free}.}
        \label{fig: cases 4 split_K=5}
    \end{figure}
\end{proof}

As with star $4$-colorings, we are now able to propose a certifying recognition
algorithm for split graphs having star chromatic number at most $5$.

\begin{corollary}
\label{cor:split-5-alg}
    There is a certifying algorithm that runs in time $O(|V|+|E|)$ to test
    whether an input split graph $G$ has a star $5$-coloring.
\end{corollary}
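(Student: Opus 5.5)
The plan is to mirror the proof of Corollary~\ref{cor:split-4-alg}, replacing Proposition~\ref{pro: star $4$-col iff F_4-free} by the case analysis in the proof of Proposition~\ref{pro: star $5$-col iff F_5-free}. First, sort the vertices by degree in time $O(|V|)$ and extract a split partition $(K,S)$ with $|K|=\omega(G)$. If $|K|\ge 6$, we return six vertices of $K$ as a copy of $K_6$, which is a no-certificate. If $|K|\le 4$, we use the coloring from Proposition~\ref{pro: split omega or omega+1}: distinct colors on $K$ and one extra color on $S$. This is a star $5$-coloring and serves as the yes-certificate.

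Now assume $K=\{0,1,2,3,4\}$. In time $O(|V|+|E|)$ we label each $x\in S$ by $N(x)\in\mathcal{P}(K)$ and keep one representative vertex per label. There are at most $32$ labels, so choosing the maximal ones under inclusion takes $O(1)$ time. By Proposition~\ref{pro:neighborhood-reduction}, each nonmaximal vertex can later be given the color of a representative of a maximal label containing its neighbourhood. This means all subsequent decisions involve only a constant-size graph $H=G[K\cup R]$, where $R$ is the set of representatives of maximal labels. If some element of $K$ lies in no label, color all of $S$ with that element, as in Proposition~\ref{pro:split-K-insaturated}. Otherwise:
\begin{itemize}
\item If some label has size $5$, return the corresponding $K_6$.
\item If some label has size $4$, the argument in Proposition~\ref{pro: star $5$-col iff F_5-free} shows that $K$ plus that vertex and any other representative induces one of $J_1,\dots,J_4$, which we return.
\end{itemize}

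In the remaining situation every maximal label has size $2$ or $3$. We then follow Cases~1--4 of the proof of Proposition~\ref{pro: star $5$-col iff F_5-free}, branching on $|N(S_3)|$ and on the values $|S_1|$ and $|N(0)\cap S|$ computed from the maximal labels. In each branch we either detect one of the configurations named there, namely a pair in $S_3$ sharing one neighbour ($J_6$), or the configurations giving $J_5,J_7,J_8,J_9,J_{11},J_{12}$. Such a configuration is returned as a constant-size no-certificate. Otherwise we output the explicit coloring from the corresponding figure, restricted as the proof indicates, and extend it to all of $S$ through the labels. Since the number of label configurations is bounded, this branching could even be replaced by brute force: search over all star $5$-colorings of $H$ and over all induced subgraphs of $H$ isomorphic to members of $\mathcal{F}_5^{\mathrm{sp}}$, both in $O(1)$ time. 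Proposition~\ref{pro: star $5$-col iff F_5-free} guarantees that one of the two searches succeeds, because $H$ is star $5$-colorable if and only if $G$ is.

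The main point needing care is correctness of the reduction to $H$. A star $5$-coloring of $H$ must extend to $G$, which is exactly Proposition~\ref{pro:neighborhood-reduction} applied repeatedly: each removed vertex has neighbourhood contained in that of a kept vertex. Conversely, an obstruction in $H$ is an induced subgraph of $G$. A second subtlety is that the split partition must put a maximum clique in $K$; this is handled by the degree-sequence construction. Building the final coloring costs $O(|V|)$, so the total running time is $O(|V|+|E|)$.
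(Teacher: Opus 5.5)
Your proposal is correct and follows essentially the same route as the paper's proof sketch. Both take a maximum split partition, dispose of the cases $|K|\ge 6$ and $|K|\le 4$, label $S$ by neighbourhoods, and keep only maximal labels (justified by Proposition~\ref{pro:neighborhood-reduction}). Both then resolve the constant number of label configurations through the case analysis for star $5$-colorable split graphs. Your brute-force search on the constant-size graph $H=G[K\cup R]$ is a clean way to make that last step fully rigorous.

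One small slip: after discarding labels of size $4$ and $5$, maximal labels may still have size $1$ (the set $S_1$ in the paper), not only $2$ or $3$. Your branching on $|S_1|$ and the brute-force alternative already cover this case.
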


\begin{proof}[Proof sketch.]
    The proof is analogous to that of Corollary~\ref{cor:split-4-alg}.  The idea
    is to obtain a split partition $(K,S)$ of the input graph $G$ that maximizes
    the cardinality of $K$.  If $|K| \ge 6$, a $K_6$ is returned as a
    no-certificate.   If $|K| \le 4$, then every vertex in $S$ receives a fifth
    color and the resulting coloring is returned as a yes-certificate. When $|K|
    = 5$, every vertex in $S$ is explored and assigned a label from the power
    set $\mathcal{P}(\{ 0, 1, 2, 3, 4 \})$.   Afterwards, the set of used labels
    is explored to extract the maximal ones, and the proof of
    Proposition~\ref{pro: star $5$-col iff F_5-free} is used as a guide to
    explore the different possible cases for the set of labels, yielding either
    a no-certificate or a yes-certificate in every case.

    Since the number of different possible labels is constant with respect to
    the order of the input graph $G$, and only a subset of the labels is
    explored, the running time is again $O(|V|+|E|)$.
\end{proof}

Let $G = (K, S)$ be a split minimal obstruction to star $6$-coloring that is not
isomorphic to $K_7$. It follows from Proposition~\ref{pro: split omega or
omega+1} that $G$ has clique number $6$, and by
Proposition~\ref{pro:split-K-insaturated}, we have $N(S) = K$. Moreover, by
Proposition~\ref{pro: split omega or omega+1}, the neighborhoods of distinct
vertices in $S$ are incomparable; in particular, it follows from Sperner’s
Theorem that $|S| \le 20$, and therefore $G$ has at most $26$ vertices.

With the help of a computer and the software \texttt{nauty} \cite{MCKAY201494},
we obtained the complete list of all non-isomorphic split graphs satisfying
these constraints and determined which of them are minimal obstructions to star
$6$-colorability. There are $68$ such obstructions, which we list in
Appendix~\ref{app:smoS6c}.

By looking at the proofs of Propositions~\ref{pro: star $4$-col iff F_4-free}
and \ref{pro: star $5$-col iff F_5-free}, it is not hard to imagine how a proof
for any fixed value of $k$ could be obtained.   Of course, the length of such a
proof even for the case $k = 6$ is too long to be considered practical.
Nonetheless, if we believe that such a proof exists, then a certifying algorithm
similar to the one described in Corollaries \ref{cor:split-4-alg} and
\ref{cor:split-5-alg} also exists.   This inspires us to propose the following
conjecture.

\begin{conjecture}
\label{con:linear-split}
    Let $k$ be a fixed positive integer $k$.   There is a certifying algorithm
    that runs in time $O(|V|+|E|)$ to solve the problem of determining whether
    an input split graph $G$ admits a star $k$-coloring.
\end{conjecture}

\section{2-trees}
\label{sec:2-trees}

A graph $G$ is a \textit{$2$-tree} if it can be constructed starting from a
triangle and repeatedly adding new vertices of degree~$2$, each time making the
new vertex adjacent to the two endpoints of an existing edge.  Equivalently, $G$
is a chordal graph in which every maximal clique has size~$3$ and every minimal
vertex separator has size~$2$. 

A \textit{$2$-path} is a $2$-tree whose maximal cliques (the triangles) can be
ordered as $T_1,\dots,T_m$ in such a way that each $T_i$ with $i\ge 2$ shares
exactly one edge with $T_{i-1}$ and has no vertices in common with $T_j$ for
$j<i-1$. In other words, the clique graph of $G$ is a path. We refer to such an
ordering as a \textit{clique path} of $G$. Every $2$-path has exactly two
simplicial vertices (i.e., vertices whose neighbors induce a clique).  

\subsection{A canonical construction of a 2-path}

Let $G$ be a $2$-path. Then $G$ has precisely two simplicial vertices; we denote
them by $x_1$ and $x_2$. Fix one of them, say $x_1$, and let $T_1$ be the unique
triangle of $G$ containing $x_1$. Starting from $T_1$, the $2$-path $G$ can be
obtained by repeatedly adding a new vertex of degree~$2$, each time adjacent to
the two endpoints of one of the two edges containing the last added vertex. This
yields a canonical construction of $G$, unique up to reversal (the other
direction being obtained by starting from the triangle containing $x_2$). 

More precisely, we obtain an ordered sequence of triangles
\[
  T_1,T_2,\dots,T_m
\]
such that for each $i\in\{1,\dots,m-1\}$ the triangle $T_{i+1}$ is obtained by
adding a new vertex that is adjacent to both endpoints of an edge of $T_i$ that
is not an edge of $T_{i-1}$. For each $i\in\{1,\dots,m-1\}$ we denote by $e_i$
the edge of $T_i$ that serves as the base for attaching the triangle $T_{i+1}$,
and we refer to $e_1,\dots,e_{m-1}$ as the \textit{base edges} of this
construction. 

The base edges naturally form a tree.   Since in every step we add a new
triangle along an unused edge in the last triangle, inductively we observe that
we are adding a leaf to a tree.   \textit{The base caterpillar} of the graph $G$
is the subgraph $B_G$ of $G$ induced by the base edges of a canonical triangle
construction of $G$.   As its name suggests, $B_G$ is actually a caterpillar.

\begin{proposition}
\label{pro:BG-caterpillar}
    The graph $B_G$ is a caterpillar.
\end{proposition}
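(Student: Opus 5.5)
We need to show that $B_G$, the subgraph induced by the base edges $e_1,\dots,e_{m-1}$, is a tree in which every vertex is at distance at most one from a central path. We first set up notation. Write $T_1=\{a,b,c\}$ and let $v_{i+1}$ be the vertex added to form $T_{i+1}$, so $T_{i+1}=e_i\cup\{v_{i+1}\}$. By the canonical construction, $e_{i+1}$ is an edge of $T_{i+1}$ different from $e_i$. Hence $e_{i+1}$ contains $v_{i+1}$ and exactly one endpoint of $e_i$. The plan is to read off the structure of $B_G$ from this single local rule.

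\emph{Step 1: $B_G$ is a tree.} We argue by induction on $i$ that the edges $e_1,\dots,e_i$ form a tree $B_i$ with $v_{i+1}\notin V(B_i)$. Since $e_{i+1}$ joins $v_{i+1}$ to a vertex of $e_i\subseteq V(B_i)$, the new edge attaches a leaf, so $B_{i+1}$ is again a tree. For the claim that $v_{i+2}$ is new, recall that each $v_j$ is a newly added vertex. As a side remark, we should check that ``induced by the base edges'' produces no additional edges of $G$ among these vertices. The reason is that every edge of $G$ lies in some triangle $T_j$, and the only edges of $T_j$ that are not base edges are the non-base edges of $T_j$. We argue that an edge of $G$ between two vertices of $B_G$ that is not a base edge would create a chord in the clique path, contradicting chordality or the $2$-path structure. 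If this turns out to be delicate, we interpret $B_G$ as the edge-induced subgraph, which is what the definition intends.

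\emph{Step 2: caterpillar structure.} Call $e_i$ a \emph{turn} if the common endpoint of $e_i$ and $e_{i+1}$ differs from the common endpoint of $e_{i-1}$ and $e_i$. Otherwise the vertex $w=e_{i-1}\cap e_i$ persists, and $e_{i+1}$ is again incident to $w$. So the sequence of base edges splits into maximal runs of consecutive edges sharing a common vertex $w_1,w_2,\dots,w_r$. Each run forms a star centred at $w_t$. Consecutive runs share exactly one edge, namely the edge $e_i$ at which the turn occurs, and this edge joins $w_t$ to $w_{t+1}$, since the new pivot is the other endpoint of $e_i$. Therefore $w_1w_2\cdots w_r$ is a path in $B_G$, and every base edge is either on this path or is incident to some $w_t$ with its other endpoint a leaf. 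That other endpoint is a vertex $v_j$ that never becomes a pivot later and is used in no further base edge, because after $v_j$ is left behind it does not lie in any later triangle. This makes $B_G$ a caterpillar with spine $w_1\cdots w_r$.

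The main obstacle is the bookkeeping in Step 2: verifying that a vertex abandoned at a turn never reappears in a later base edge. To handle this, we use the clique-path property that $T_{i+1}$ shares no vertex with $T_j$ for $j<i$, together with the fact that base edges are contained in the triangles. The boundary cases ($m\le 2$, or a single run) are then immediate.
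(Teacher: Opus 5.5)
Your argument is correct in substance, but it takes a different route from the paper.

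\textbf{Comparison with the paper.} The paper inducts on $|G|$. It deletes a simplicial vertex $v$ and notes that $B_G$ is $B_{G'}$ plus the edge $e$ on which $v$ was attached. Since $e$ meets the last base edge of $G'$, it concludes that the caterpillar structure is preserved. Your Step~1 is the same leaf-attachment observation (so $B_G$ is a tree). Step~2 then replaces the inductive step with a global description:
\begin{itemize}
\item the pivots $p_i=e_i\cap e_{i+1}$ are constant on maximal runs;
\item consecutive run pivots $w_t,w_{t+1}$ are joined by the turn edge;
\item every base edge contains the pivot of its run.
\end{itemize}
This buys two things. First, it makes explicit what the paper's induction leaves implicit. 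Attaching a pendant edge to a caterpillar at an arbitrary vertex need not give a caterpillar (e.g.\ at a leaf adjacent to an interior spine vertex), so one must know that the last base edge contains an end of the current spine. Second, it identifies the spine as the sequence of pivots, i.e.\ the fan centres $u_i$ the paper uses later.

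You should add one line showing that the $w_t$ are pairwise distinct; otherwise $w_1\cdots w_r$ is only a walk. At a turn $e_i$ one has $e_i=\{v_i,w_t\}$, and the new pivot is $w_{t+1}=v_i$, a fresh vertex.

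\textbf{Two auxiliary claims should be dropped.}

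Your side remark is false. For $m\ge 3$ the whole triangle $T_2$ lies in $V(B_G)$, so $G[V(B_G)]$ is not a tree. The edge-induced reading is therefore the only one under which the proposition holds, not a fallback.

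More importantly, you cannot handle your ``main obstacle'' with the property that $T_{i+1}$ shares no vertex with $T_j$ for $j<i$. The pivot $p_1$ lies in $T_1\supseteq e_1$ and in $T_3\supseteq e_2$. More generally, a pivot common to $e_k,\dots,e_l$ lies in all of $T_k,\dots,T_{l+1}$. So this property fails for every $2$-path with at least three triangles; it would rule out the very fans you are describing.

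The fact you actually need follows by a short induction from the construction. Since $T_{i+1}=e_i\cup\{v_{i+1}\}$ with $v_{i+1}$ new and $e_{i+1}\subseteq T_{i+1}$, a vertex missing from $e_i$ is missing from all later triangles and base edges.

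In fact the obstacle disappears altogether. Step~1 gives a tree, and every base edge contains its run's pivot. Hence every vertex of $B_G$ is within distance one of $w_1\cdots w_r$, which is one of the standard definitions of a caterpillar. In a tree this already forces the off-path vertices to be leaves, so no bookkeeping about abandoned vertices is required.
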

\begin{proof}
    We proceed by induction on $|G|$. If $|G|=4$, then $G$ is a diamond and
    $B_G$ consists of the unique base edge, that is, the chord of the diamond.
    In particular, $B_G$ is a caterpillar (a single edge).

    Now assume $|G|>4$ and let $v$ be a simplicial vertex of $G$. Let $G'$ be
    the graph $G' = G-v$. Then $G'$ is again a $2$-path  with $|G'|=|G|-1$. By
    the induction hypothesis, $B_{G'}$ is a caterpillar. Let $e=xy$ be the
    unique edge of $G'$ such that $v$ is adjacent to both $x$ and $y$;
    equivalently, $\{v,x,y\}$ induces the triangle in which $v$ is introduced,
    and $e$ is the corresponding base edge. Then $B_G$ is obtained from $B_{G'}$
    by adding the edge $e$. Moreover, one of the endpoints of $e$ lies on the
    last base edge of the canonical construction of $G'$, and hence attaching
    $e$ to $B_{G'}$ preserves the caterpillar structure. Therefore $B_G$ is a
    caterpillar.
\end{proof}

We encode the structure of $G$ by a sequence of integers derived from $B_G$. Let
$b_1,\dots,b_r$ be the non-leaf vertices of $B_G$, listed in the order in which
they appear along the spine of $B_G$, according to the canonical construction
above. For each $j\in\{1,\dots,r\}$, let $d_j$ be the number of leaves of $B_G$
adjacent to $b_j$, in other words
\[
  d_j = d_{B_G}(b_j). 
\]
The \textit{degree sequence associated with $G$}, denoted $\sigma(G)$, is
defined to be
\[
  \sigma(G)=(d_1,\dots,d_r).
\]
This sequence records the degrees of the non-leaf vertices of the base
caterpillar, in the canonical order induced by the construction starting at
$x_1$. Up to reversal, this sequence does not depend on the choice of simplicial
vertex from which we start, and it provides a convenient way to describe the
structure of a $2$-path in our arguments.

We now show that the degree sequence associated with a $2$-path determines the
graph uniquely (up to isomorphism and reversal of the sequence). 

\begin{proposition}
\label{pro:2path-sequence}
    Let $G$ be a $2$-path and let $\sigma(G)$ be its associated degree sequence.
    We have:
    \begin{enumerate}[label=(\alph*)]
        \item the sequence $\sigma(G)$ is well-defined up to reversal, that is,
            starting the canonical construction from $x_1$ or from $x_2$
            produces sequences that are reverses of each other; and,
        \item if $G$ and $G'$ are $2$-paths such that $\sigma(G)=\sigma(G')$ or
            $\sigma(G)$ is the reversal of $\sigma(G')$, then $G$ and $G'$ are
            isomorphic.
    \end{enumerate}
    In particular, the map that sends a $2$-path $G$ (up to isomorphism) to its
    associated degree sequence $\sigma(G)$ (up to reversal) is a bijection
    between $2$-paths and finite integer sequences with entries at least $2$,
    where the empty sequence corresponds to the diamond.
\end{proposition}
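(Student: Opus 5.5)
The plan is to replace the base caterpillar by an equivalent combinatorial description of the canonical construction, namely a word over two letters, and to read $\sigma(G)$ off this word. In the canonical construction starting from $T_1$, let $v_{i+1}$ be the vertex added to create $T_{i+1}$. The base edge $e_i$ is one of the two edges of $T_i$ containing $v_i$. So $e_i = v_i w_i$, where $w_i$ is one of the two old vertices of $T_i$. I will record, for each $i \ge 2$, whether $w_i = w_{i-1}$ (a ``stay'' step) or $w_i \ne w_{i-1}$ (a ``switch'' step). Note that in the latter case $w_i = v_{i-1}$, since $e_i \ne e_{i-1}$. This gives a binary word, and the first task is to show that $G$ is determined, up to isomorphism, by the number of triangles together with this word. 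That part is immediate by induction on $m$: the word tells us at each step exactly which edge of the last triangle receives the new vertex, and any two constructions following the same word can be matched vertex by vertex.

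Next I will translate the word into $B_G$. A maximal run of consecutive stay steps keeps a common endpoint $w$, so the corresponding base edges form a fan of edges all incident to $w$. At a switch step the new common endpoint is the previous new vertex, so consecutive fans share exactly one base edge. I will check two things. First, the non-leaf vertices of $B_G$ are exactly these fan centres, appearing along the spine in the order of the runs; this recovers Proposition~\ref{pro:BG-caterpillar} in a more precise form. Second, the degree $d_{B_G}(b_j)$ equals the number of base edges in the $j$-th fan, which is at least $2$ for every centre, because a centre must lie on at least two base edges.

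Here I read $d_j$ as the degree $d_{B_G}(b_j)$, as the displayed formula says, since the claimed range ``entries at least $2$'' fits degrees and not leaf counts. Under this reading the run lengths of the word are $d_1-1, d_2-1, \dots$, with an off-by-one adjustment at the two ends that I must fix carefully. Then $\sigma(G)$ and the word determine each other. The case with no non-leaf vertex is $m=2$, where $B_G$ is a single edge, giving the diamond and the empty sequence.

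For (a), I will run the construction backwards from the triangle containing $x_2$. The clique path is reversed, so the set of base edges is the same set $\{T_i\cap T_{i+1}\}$. Hence $B_G$ is literally the same graph, and the spine is traversed in the opposite order, so the sequence is reversed. For (b), combining this with the word-determines-graph argument gives the isomorphism. For the bijection, given any sequence with entries $\ge 2$, I will build the word with the prescribed run lengths, perform the construction, and verify that it returns the given sequence. Together with (a) and (b), this gives injectivity and surjectivity modulo reversal.

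The main obstacle is the bookkeeping at the two ends of the spine. The first and last fans include the edges of $T_1$ and $T_m$ that are not base edges, and I must pin down exactly how $d_1$ and $d_r$ relate to the lengths of the first and last runs, so that the correspondence is exact and not off by one. I will settle this by working the diamond and the $2$-paths on $5$ and $6$ vertices by hand and then fixing the convention.
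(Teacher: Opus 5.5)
Your proof is correct. Part (a) is the paper's argument: both canonical constructions traverse the same triangles in opposite orders, so the base edges $T_i\cap T_{i+1}$, and hence $B_G$, coincide and only the direction of the spine changes.

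For (b) and the bijection your route is more explicit than the paper's. The paper argues sequence $\to$ caterpillar $\to$ $G$, asserting that ``once $B_G$ is fixed, the canonical construction is forced.'' That is true, but the reason is left implicit: at each spine vertex the two spine edges are the first and last base edges in construction order, and the leaves in between are interchangeable. The paper also never checks that the $2$-path it builds from a given sequence actually has that sequence. Your stay/switch word avoids the abstract caterpillar. The word visibly determines $G$, and its blocks determine and are determined by $\sigma(G)$, which gives injectivity and surjectivity together.

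The bookkeeping you flag as the main obstacle needs no end correction. Let $k_j$ be the length of the $j$-th maximal constant block of $(w_i)_{i\ge 2}$. Then $d_j=k_j+1$ for every $j$, including $j=1$ and $j=r$. The reason is that the first centre enters $B_G$ through $e_1$, and each later centre through the edge joining it to the previous centre; after that it gains exactly one base edge per index of its block. The non-base edges of $T_1$ and $T_m$ do not count towards $d_{B_G}$, so they cannot shift $d_1$ or $d_r$. Hence $(k_1,\dots,k_r)\mapsto(k_1+1,\dots,k_r+1)$ is a bijection from compositions of $m-2$ onto sequences with entries at least $2$ and $\sum_j(d_j-1)=m-2$. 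For $m=2$ both the word and the sequence are empty.

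One small wrinkle remains: $w_1$ is undefined, because $e_1$ avoids the simplicial vertex of $T_1$. Start the word at $i=3$, and note that the choice of $w_2$ between the endpoints of $e_1$ is immaterial by the symmetry of the diamond $T_1\cup T_2$.
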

\begin{proof}
    We first prove $(a)$. Let $x_1$ and $x_2$ be the two simplicial vertices of
    $G$. Starting the canonical construction from $x_1$ produces an ordered
    sequence of base edges
    \[
    e_1,e_2,\dots,e_{m-1}.
    \]
    Starting from $x_2$ produces the same base edges, but in the opposite order.
    Indeed, at each step of a canonical construction, the next triangle is
    forced: one deletes the current simplicial vertex and moves to the unique
    triangle that contains it. Thus the two possible canonical constructions
    traverse the same sequence of triangles in opposite directions.
    Consequently, they induce the same base caterpillar $B_G$, with its spine
    traversed in opposite directions. Therefore the corresponding degree
    sequences are reverses of one another. This proves that $\sigma(G)$ is
    well-defined up to reversal.

    We now prove $(b)$. We show that the sequence $\sigma(G)$ determines $G$ up
    to isomorphism and reversal. Let
    \[
    \sigma(G)=(d_1,\dots,d_r).
    \]
    The sequence determines the base caterpillar $B_G$ uniquely up to
    isomorphism. Indeed, its spine has vertices
    \[
    b_1,b_2,\dots,b_r
    \]
    in this order, and the number of leaves attached to each $b_j$ is determined
    by $d_j$: if $r=1$, then $b_1$ has $d_1$ leaves; if $r\ge 2$, then the
    endvertices $b_1$ and $b_r$ have respectively $d_1-1$ and $d_r-1$ leaves,
    while each internal spine vertex $b_j$, with $1<j<r$, has $d_j-2$ leaves.
    Thus the degree sequence determines the caterpillar $B_G$, up to reversing
    the order of the spine.

    It remains to observe that a $2$-path is uniquely determined by its base
    caterpillar together with the choice of one end of the canonical
    construction. Indeed, once $B_G$ is fixed, the canonical construction is
    forced: consecutive base edges determine consecutive triangles, and at each
    step the new vertex is attached to the endpoints of the current base edge.
    Equivalently, for each vertex $b_j$, the neighbors of $b_j$ in $B_G$ appear
    consecutively as the rim of the unique maximal fan centered at $b_j$, and
    the overlap between consecutive fans is forced by the order of the spine.
    Hence, two $2$-paths with the same associated degree sequence have
    isomorphic base caterpillars and the same forced fan overlaps, and therefore
    are isomorphic.

    If $\sigma(G)$ is the reversal of $\sigma(G')$, then the same argument
    applies after reversing the canonical construction of one of the graphs.
    Hence $G\cong G'$ in this case as well. This proves $(b)$.

    For the last statement, observe that the empty sequence corresponds to the
    diamond, and every non-empty finite sequence of integers at least $2$
    determines, by the construction above, a unique $2$-path up to isomorphism
    and reversal. 
\end{proof}

We next relate degrees in $B_G$ and $G$. Consider a canonical triangle
construction $T_1,\dots,T_r$ of $G$. Whenever a vertex $u$ is introduced, it is
attached to a base edge $e=xy$, so $V(T_i)=\{u,x,y\}$ with $ux,uy\in E(G)$. In
the next step, the construction continues through exactly one of the edges $ux$
or $uy$ (say $ux$), which becomes the next base edge. Hence, among the two edges
of $T_i$ incident to $u$, exactly one (the continuation edge $ux$) is recorded
in $B_G$, whereas the edge $xy$ joining the two older vertices belongs to $G$
but does not appear in $B_G$. This accounts for one additional neighbor of $u$
in $G$ not seen in $B_G$ in this direction. Since a $2$-path admits two
canonical constructions (one starting from each simplicial vertex), we obtain
one such additional neighbor at each end. Therefore, every $u\in V(B_G)$ has
exactly two neighbors in $G$ that are not its neighbors in $B_G$, and so
\[
    d_G(u)=d_{B_G}(u)+2.
\]

Let $\spn$ be the \textit{spine} of $B_G$, that is, the central path of $B_G$
obtained after deleting all leaves. Write $V(\spn)=\{u_1,u_2,\dots,u_t\}$, where
$u_i$ is adjacent to $u_{i+1}$ for $1\le i<t$. See Figure~\ref{fig:B_G} for an
example. For each $i\in[t]$, let $d_i$ be equal to $d_{B_G}(u_i)$, and let
the neighborhood of $u_i$ in $B_G$ be
\[
    N_{B_G}(u_i)=\{v_{i,1},\dots,v_{i,d_i}\},
\]
so that vertices $v_{i,j}$ and $v_{i,j+1}$ are adjacent in $G$, for $1\le j<
d_i$. The vertex $u_i$ is the center of a maximal fan $F_i$ in $G$. Note that
$F_i$ is unique having $u_i$ as its center since $G$ is a $2$-path. We denote
the vertices of the rim path of $F_i$ by
\[
    v_{i,0},v_{i,1},\dots,v_{i,d_i},v_{i,d_i+1},
\]
so that $u_iv_{i,0}$ and $u_iv_{i,d_i+1}$ are the two edges of $G$ incident to
$u_i$ that do not belong to $B_G$, and $v_{i,0}v_{i,1}\cdots
v_{i,d_i}v_{i,d_i+1}$ is the rim path of $F_i$.  We call $v_{i,0}$ and
$v_{i,d_i+1}$ the \textit{boundary vertices} of $F_i$. 

\begin{figure}[t]
    \centering
    \includegraphics[width=0.8\textwidth]{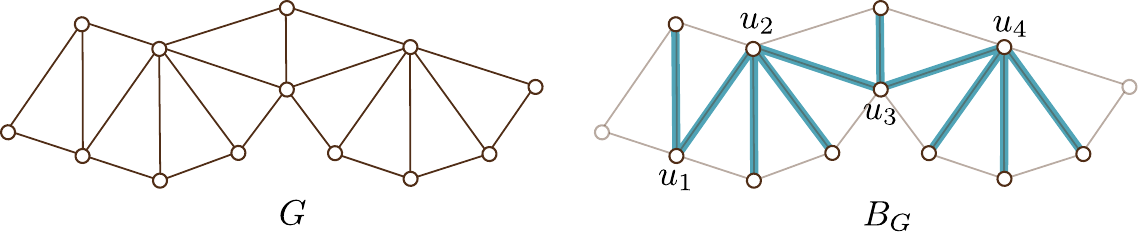}
    \caption{A $2$-path $G$ and its base graph $B_G$, which is a caterpillar
        with spine $(u_1,u_2,u_3,u_4)$. }
    \label{fig:B_G} 
\end{figure}

Consecutive fans (namely, those whose centers are consecutive vertices of
$\spn$) overlap in the following way: for every $i\in\{1,\dots,t-1\}$,
\[
v_{i,d_i}=u_{i+1},\qquad
v_{i,d_i-1}=v_{i+1,0},\qquad
v_{i,d_i+1}=v_{i+1,2},\qquad
v_{i+1,1}=u_i.
\]
Additionally, observe that $v_{1,0}$ is the simplicial vertex adjacent to $u_1$,
whereas $v_{t,d_t+1}$ is the other simplicial vertex of $G$ adjacent to $u_t$.

The following result will be used to reduce the study of induced $P_4$'s to
paths on three vertices contained in a single fan.
\begin{lemma}
\label{lem:P4-has-P3-in-fan}
    Let $G$ be a $2$-path, and let $F_1,\dots,F_t$ be the fans associated with
    the vertices of the spine of $B_G$ as above. If $P$ is an induced path on
    four vertices in $G$, then there exists $i\in[t]$ such that
    \[
        |V(P)\cap V(F_i)|\ge 3.
    \]
\end{lemma}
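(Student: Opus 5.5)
The plan is to use two facts. First, every edge of $G$ lies in some fan $F_i$. Second, the structure of $P_4$'s in chordal graphs forces the middle edge of $P$ to behave like a separator. Write $P=(a,b,c,d)$ for the induced path.

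\textbf{Step 1: the middle edge lies in a triangle.} Since $G$ is a $2$-tree, every edge of $G$ lies in a triangle $T_j$ of the clique path. Every triangle $T_j$ contains a spine vertex (the center of the fan it belongs to), because the triangles $T_j$ are exactly the faces $u_i v_{i,\ell} v_{i,\ell+1}$ of the fans, plus possibly the end triangles. I would check this from the overlap relations $v_{i,d_i}=u_{i+1}$, $v_{i,d_i-1}=v_{i+1,0}$, $v_{i,d_i+1}=v_{i+1,2}$, $v_{i+1,1}=u_i$. So I first establish that $V(G)=\bigcup_i V(F_i)$ and that each triangle lies in some fan. In particular $\{b,c\}\subseteq V(F_i)$ for some $i$.

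\textbf{Step 2: analyse $bc$ as a minimal separator.} Because $P$ is induced, $a$ and $d$ are nonadjacent. Moreover $a\notin N(c)$ and $d\notin N(b)$. In a $2$-tree every edge $bc$ lying in at least two triangles is a minimal separator. The triangles containing $bc$ are $\{b,c,w\}$ for common neighbours $w$, and in a $2$-path there are at most two of them. The key case split is on the type of $bc$:
\begin{itemize}
\item $bc$ is a base edge, so it is in $B_G$ and one endpoint is a spine vertex $u_i$;
\item $bc$ is a rim edge $v_{i,\ell}v_{i,\ell+1}$;
\item $bc$ is a boundary edge $u_iv_{i,0}$ or $u_iv_{i,d_i+1}$.
\end{itemize}

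\textbf{Step 3: the spine case.} Suppose one of $b,c$ is a spine vertex, say $b=u_i$. Then $a$ is a neighbour of $u_i$, and all neighbours of $u_i$ lie in $F_i$, since $F_i$ is the maximal fan centered at $u_i$ with rim consisting of its whole neighbourhood. Hence $a,b,c\in V(F_i)$ and we are done. The same argument applies if $c=u_i$, using $d$.

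\textbf{Step 4: neither $b$ nor $c$ is a spine vertex.} Then $bc$ is a rim edge $v_{i,\ell}v_{i,\ell+1}$ of some fan, or an edge joining leaves and boundary vertices. I would show that such an edge lies in exactly one triangle apart from those through $u_i$. Equivalently, a non-base edge has at most two common-neighbour vertices, one of which is $u_i$. Now $a$ is adjacent to $b$ but not to $c$, and $d$ is adjacent to $c$ but not to $b$. I would argue that $a$ or $d$ must be $u_i$ or a neighbour of $u_i$ on the rim of $F_i$. The reason is that the neighbourhood of a non-spine vertex $v$ is contained in the union of at most two consecutive fans, namely those whose centers are adjacent to $v$ in $B_G$. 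One checks that the rim vertices of a fan have all their neighbours inside $F_i\cup F_{i\pm1}$, and that a vertex in $F_i\cap F_{i+1}$ is handled by choosing the fan appropriately.

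This final case analysis is the main obstacle: relating the neighbourhoods of non-spine vertices to consecutive fans via the overlap identities, and choosing $i$ correctly when $b,c\in F_i\cap F_{i+1}$. I would handle it by induction on $|G|$ instead, deleting the simplicial vertex $v_{t,d_t+1}$ of $G$. If $P$ avoids it, induction applies. Induction is legitimate here because deleting this vertex yields a $2$-path whose fans are the same fans, except that $F_t$ shrinks or disappears, and when $F_t$ disappears its vertices lie in $F_{t-1}$.

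If $P$ contains $v_{t,d_t+1}$, that vertex is an endpoint of $P$ of degree $2$, adjacent to $u_t$ and $v_{t,d_t}$. So the next vertex of $P$ is $u_t$ or $v_{t,d_t}$, and the following one is a neighbour of $u_t$, or a common neighbour of $v_{t,d_t}$ and $u_t$ (hence $v_{t,d_t-1}$), all within $F_t$. This gives three vertices of $P$ in $F_t$. I expect this induction to replace the messier direct Step 4.
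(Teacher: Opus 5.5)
Your final argument is correct, but it follows a different route from the paper. The paper proves the stronger local fact that every induced $P_3$ lies in one fan, by examining its middle vertex $y$. For $y=u_i$ it uses $N(u_i)\subseteq V(F_i)$ (your Step~3). For $y$ off the spine it shows, via the overlap relations, that every induced $P_3$ centred at $y$ stays inside one fan. The lemma then follows by applying this to the first three vertices of $P$.

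You instead induct on $|G|$, deleting the terminal simplicial vertex $v_{t,d_t+1}$. The fans of $G'$ are $F_1,\dots,F_{t-1}$, together with $F_t-v_{t,d_t+1}$ when $d_t\ge 3$. When $d_t=2$ and $t\ge 2$, $u_t$ becomes a leaf of $B_{G'}$ and $F_t$ is dropped. So every fan of $G'$ sits inside a fan of $G$, and the hypothesis transfers. A path through $v_{t,d_t+1}$ must start there, so only $u_t$ and $v_{t,d_t}$ need to be understood. This confines the structural work to the rigid end of the clique path, at the price of checking how $B_G$ changes under the deletion. The degenerate case $t=1$, $d_1=2$ leaves the diamond, where the claim is vacuous.

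Once you adopt the induction, Steps~1--3 and the sketched Step~4 are superfluous. Incidentally, the direct Step~4 is easier than you feared. Every non-spine vertex other than the two simplicial ones is a leaf $v_{i,j}$ of $B_G$, with $N_G(v_{i,j})=\{u_i,v_{i,j-1},v_{i,j+1}\}\subseteq V(F_i)$; this is essentially the paper's argument.

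One step still needs justification. When $p_2=v_{t,d_t}$, inducedness only gives $p_3\in N(v_{t,d_t})\setminus\{u_t\}$. Your claim that $p_3$ is a common neighbour of $v_{t,d_t}$ and $u_t$ requires $N_G(v_{t,d_t})=\{u_t,v_{t,d_t-1},v_{t,d_t+1}\}$. This holds because $v_{t,d_t}$ is a leaf of $B_G$: the only spine vertex adjacent to $u_t$ in $B_G$ is $u_{t-1}=v_{t,1}$, and $d_t\ge 2$. Hence $d_G(v_{t,d_t})=d_{B_G}(v_{t,d_t})+2=3$. With that sentence added, the induction is a complete proof.
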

\begin{proof}
    It is enough to prove that every induced path on three vertices in $G$ is
    contained in some fan $F_i$. Indeed, if $P=(w,x,y,z)$ is an induced path on
    four vertices, then $(w,x,y)$ is an induced path on three vertices; hence
    its three vertices are contained in some fan $F_i$.

    Let $(x,y,z)$ be an induced path on three vertices. Suppose first that $y\in
    V(\spn)$, say $y=u_i$. Then all neighbors of $u_i$ in $G$ lie on the rim
    path of the fan $F_i$, namely among
    \[
        v_{i,0},v_{i,1},\dots,v_{i,d_i},v_{i,d_i+1}.
    \]
    Thus $x,z\in V(F_i)$, and so $x,y,z\in V(F_i)$.

    Suppose now that $y\notin V(\spn)$. Then $y$ lies on the rim path of at
    least one fan $F_i$. If $y$ is not a boundary vertex of such a fan, then all
    neighbors of $y$ in $G$ are contained in this same fan. Therefore $x,y,z\in
    V(F_i)$.

    It remains to consider the case in which $y$ is a boundary vertex. Then $y$
    belongs to two consecutive fans. By the overlap relations between
    consecutive fans, every induced path on three vertices with middle vertex
    $y$ is contained in one of these two fans. Since $(x,y,z)$ is induced and
    has middle vertex $y$, it follows that $x,y,z$ are contained in some fan
    $F_i$.

    Therefore every induced path on three vertices in $G$ is contained in some
    fan $F_i$, and the result follows.
\end{proof}


\subsection{\texorpdfstring{Minimal obstructions to star $4$-colorings of $2$-paths}{Minimal obstructions to star 4-colorings of 2-paths}}

In this subsection, we focus on the case $k=4$ and identify the minimal
$2$-paths that are not star $4$-colorable. 

\begin{figure}
  \centering
  \includegraphics[width=0.65\textwidth]{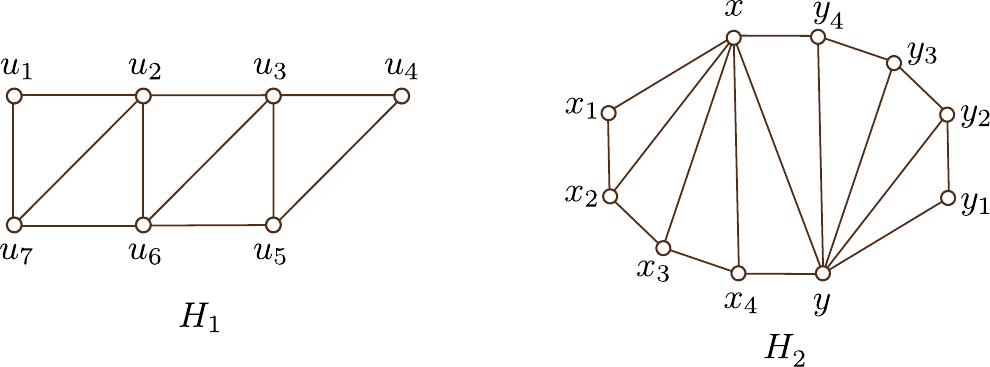}
  \caption{The two minimal $2$-paths that are not star $4$-colorable.}
  \label{fig:2-paths_k=4}
\end{figure}

\begin{proposition}
\label{pro:graphs_H1H2}
    The graphs $H_1$ and $H_2$ depicted in Figure~\ref{fig:2-paths_k=4} are
    minimal obstructions to star $4$-colorability within the class of
    $2$-paths. 
\end{proposition}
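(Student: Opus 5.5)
We do not see the figure, so the plan has to be phrased in terms of the degree-sequence encoding $\sigma$ of Proposition~\ref{pro:2path-sequence}. The first step is to read off $\sigma(H_1)$ and $\sigma(H_2)$ and write out each graph explicitly, with fans $F_i$, centres $u_i$ and rim vertices $v_{i,j}$, using the overlap relations between consecutive fans. Within the class of $2$-paths, the proper induced subgraphs that are again $2$-paths are obtained by deleting one of the two simplicial vertices, possibly repeatedly. Indeed, deleting a non-simplicial vertex of a $2$-path either disconnects it or leaves a vertex that lies in no triangle, so the result is not a $2$-tree. Hence minimality within the class reduces to two checks: $H_j$ is not star $4$-colorable, and $H_j-x$ is star $4$-colorable for each of its two simplicial vertices $x$. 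Since star colorability is hereditary, it suffices to give the two colorings for one-vertex deletions.

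For the positive part I would simply exhibit explicit star $4$-colorings of $H_j-x_1$ and $H_j-x_2$ for $j\in\{1,2\}$. When $H_j$ has a reversal symmetry, a single coloring suffices. To verify them I would use Lemma~\ref{lem:P4-has-P3-in-fan}. Since $2$-paths are chordal and contain no induced $C_4$, a proper coloring is a star coloring exactly when no induced $P_4$ is bicolored. Every induced $P_4$ meets some fan in at least three vertices, so it is enough to check, fan by fan, the induced $P_3$'s lying in a fan together with their one-step extensions. Inside a fan with centre $u_i$ and rim $v_{i,0},\dots,v_{i,d_i+1}$, the rim must avoid $c(u_i)$. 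A bicolored $P_4$ through $u_i$ has the form $(a,u_i,b,u')$ with $c(a)=c(b)$, where $a,b$ are nonadjacent rim vertices. So the local condition is that at most one colour is repeated on the rim, and that this colour's occurrences are not matched on the outside. This gives a mechanical check.

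The main obstacle is the negative part: showing $H_j$ has no star $4$-coloring. In a $2$-path every triangle is rainbow. I would start from the triangle $T_1$ and propagate the canonical construction. Each new vertex attached to the base edge $e_i$ has exactly two colour options, namely the colour of the opposite vertex of $T_i$ or the fourth colour. Choosing the old colour creates a bicolored $P_3$ $(w,\cdot,w')$ across the diamond, which forbids that colour pair later. The key local fact I expect to prove is that in a fan with a long rim, the rim (all non-centre colours) can repeat a colour only in a restricted pattern; roughly, a rim of length about $5$ or more forces a near-periodic pattern in the three non-centre colours. For example, in a fan whose rim has $5$ vertices, the rim uses $3$ colours with the centre excluded, so some colour repeats at nonadjacent positions, and neither of those vertices may have another neighbour of the centre's colour. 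Then I would do a case analysis along the spine. The choices on the first fan are forced up to permutation and a small number of branches, and I would show that each branch produces a bicolored $P_4$ in the last fan, where the two boundary configurations collide. I expect this case analysis, which keeps the number of branches small by exploiting symmetry and reversal, to be the laborious heart of the proof. As a fallback, I would argue via the forced colour of the boundary vertices $v_{i,0},v_{i,d_i+1}$ shared between consecutive fans.
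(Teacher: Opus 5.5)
\textbf{Minimality has a genuine gap.} You reduce minimality to deleting the two simplicial vertices, on the grounds that any other deletion ``disconnects the graph or leaves a vertex in no triangle''. That dichotomy is false. A $2$-tree is $2$-connected, so no single deletion disconnects it. Deleting the middle vertex of $H_1$ (the square of $P_7$, which is what $\sigma(H_1)=(2,2,2)$ encodes) leaves two triangles joined by an edge, i.e.\ $D_4$, which is connected with every vertex in a triangle. Such deletions do leave the class, but for an edge-count reason: a $2$-tree on $n$ vertices has $2n-3$ edges, and non-simplicial vertices have degree at least $3$. Even then, single-deletion arguments do not show that every proper induced $2$-path subgraph misses a simplicial vertex.

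More importantly, your reduction only gives minimality among $2$-paths. The paper uses the notion from the introduction: every proper induced subgraph must be star $4$-colorable. Its proof exhibits a star $4$-coloring of every vertex-deleted subgraph up to isomorphism (Figure~\ref{fig:2-paths_4colors}). This stronger form is what is needed later, when $H_1,H_2$ are listed among the minimal obstructions for partial $2$-paths (Question~\ref{ques:partial-2paths-4star}). Subgraphs such as $H_1$ minus its middle vertex, or $H_2$ minus one of its two centres, contain both simplicial vertices, so your two colorings say nothing about them.

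The fix is to drop the structural reduction and color every vertex-deleted subgraph: four cases for $H_1$ and five for $H_2$, up to symmetry. These subgraphs are chordal, so checking induced $P_4$'s still suffices, although Lemma~\ref{lem:P4-has-P3-in-fan} no longer applies verbatim. Your proposed local test is also too strong. A fan rim may repeat several colours; the periodic rims in the paper's $4$-coloring do. The actual constraint through the centre is that no repeated rim colour sits on a vertex having a further neighbour of the centre's colour.

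\textbf{Non-colorability is only a plan.} Exhaustive propagation with two options per new vertex is a sound finite method, but you have not carried it out. The ``near-periodic rim'' lemma is neither needed nor true as stated: a rim coloured $\beta\gamma\beta\delta\beta$ around a centre of colour $\alpha$ is a valid star colouring of that fan.

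Your observation about repeated rim colours, aimed at the right colour, is exactly the paper's argument for $H_2$. Take adjacent centres $x,y$ with rim paths $(x_1,\dots,x_4)$ and $(y_1,\dots,y_4)$, where $x_4\sim y$ and $y_4\sim x$. The path $(x_1,\dots,x_4)$ lies in $N(x)$, so to avoid being bicolored it must use $c(y)$, necessarily at some $x_i$ with $i\le 3$. Symmetrically, some $y_j$ with $j\le 3$ gets $c(x)$, and then $(x_i,x,y,y_j)$ is bicolored.

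For $H_1$ the paper runs a direct case analysis from the middle triangle. Your systematic propagation is actually the safer route there, because the paper's case split omits the branch $c(u_4)=\red$, $c(u_5)=\purple$. That branch dies quickly: the bicolored path $(u_4,u_3,u_2)$ forces $u_7$ and $u_1$ to be $\purple$ and $\green$, and then $(u_1,u_7,u_6,u_5)$ is bicolored.
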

\begin{proof}
    We first show that $H_1$ is not star $4$-colorable. Suppose, for a
    contradiction, that $c$ is a star $4$-coloring of $H_1$. Note that the
    vertices $u_2,u_3,u_6$ induce a triangle, so they must receive pairwise
    distinct colors. We may assume $c(u_2)=\red$, $c(u_3)=\blue$, and
    $c(u_6)=\green$. Since $u_4$ and $u_5$ are adjacent to $u_3$, we must have
    $c(u_4) \in \{ \green,\purple \}$ and $c(u_5)\in \{ \red,\purple \}$.
    Moreover, it is not possible that $c(u_4)=\green$ and $c(u_5)=\red$, since
    then $(u_4,u_5,u_6,u_2)$ would be a bicolored $P_4$, a contradiction. 
    \begin{itemize}
        \item Suppose $c(u_4)=\purple$, so $c(u_5)=\red$. If $c(u_1)=\blue$
            (resp. $c(u_1)=\green$) then  $(u_1,u_2,u_3,u_5)$ (resp.
            $(u_1,u_2,u_6,u_5)$) is a bicolored $P_4$, a contradiction. Thus,
            $c(u_1)=\purple$, but then $u_7$ must be assigned $\blue$.
            Therefore, $(u_7,u_2,u_3,u_5)$ is a bicolored $P_4$, again a
            contradiction. 

        \item Suppose $c(u_4)=\green$, so $c(u_5)=\purple$. The vertex $u_7$ is
            adjacent to both $u_2$ and $u_6$, hence $c(u_7)\notin
            \{\red,\green\}$. If $c(u_7)=\blue$, then $(u_4,u_3,u_6,u_7)$ is a
            bicolored $P_4$, whereas if $c(u_7)=\purple$, then
            $(u_4,u_5,u_6,u_7)$ is a bicolored $P_4$; both cases contradict that
            $c$ is a star coloring. Thus, $u_7$ would require a fifth color,
            contradicting that $c$ uses only four colors.  
    \end{itemize}
    Therefore, $H_1$ is not star $4$-colorable. 

    We now show that $H_2$ is not star $4$-colorable. Suppose, for a
    contradiction, that $c$ is a star $4$-coloring of $H_2$. Without loss of
    generality, suppose $c(x)=\red$ and $c(y)=\blue$. When coloring the induced
    path $(x_1,x_2,x_3,x_4)$, we must use the color $\blue$ on at least one of
    its vertices; otherwise $(x_1,x_2,x_3,x_4)$ would have to alternate between
    the two remaining colors $\{\green,\purple\}$, yielding a bicolored $P_4$, a
    contradiction. Moreover, the blue vertex among $\{x_1,x_2,x_3,x_4\}$ cannot
    be $x_4$, since $x_4$ is adjacent to $y$ and $c(y)=\blue$. Thus,
    $c(x_i)=\blue$ for some $i\in\{1,2,3\}$. By symmetry, there exists
    $j\in\{1,2,3\}$ such that $c(y_j)=\red$. But then $(x_i, x, y, y_j)$ is a
    bicolored $P_4$, a contradiction. Thus, $H_2$ is not star $4$-colorable. 

    Finally, the minimality of graphs $H_1$ and $H_2$ is shown in
    Figure~\ref{fig:2-paths_4colors}, where (up to isomorphism) every graph
    obtained from $H_1$ or $H_2$ by deleting one vertex admits a star
    $4$-coloring.  
\end{proof}

Notice that the associated degree sequences of $H_1$ and $H_2$ are $\sigma(H_1)
= (2,2,2)$ and $\sigma(H_2) = (4,4)$.   We will use this information in later
sections.

\begin{figure}[t]
    \centering
    \includegraphics[width=1\textwidth]{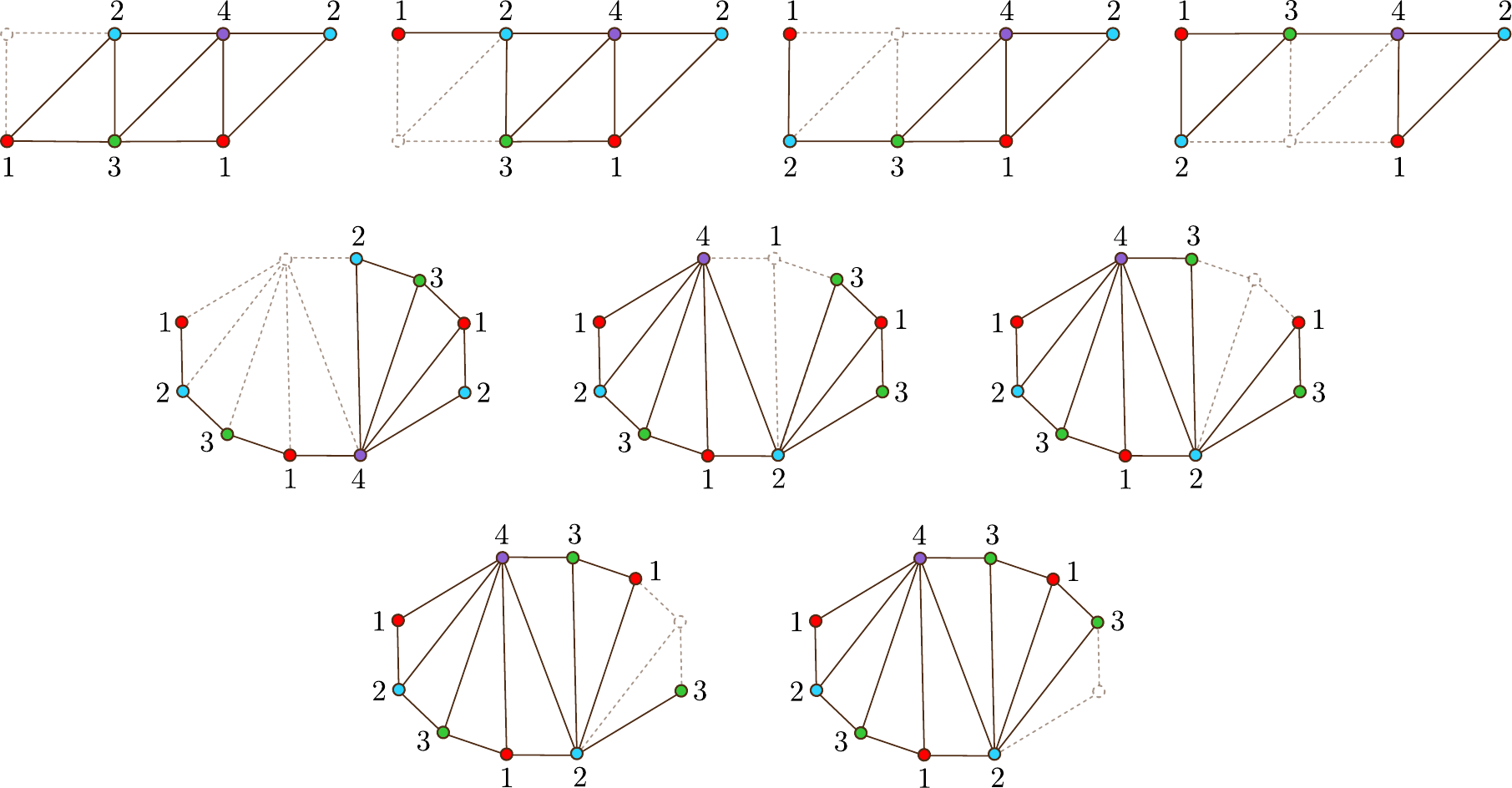}
    \caption{star $4$-colorings of all induced subgraphs obtained, up to
        isomorphism, by deleting one vertex from each of the two $2$-paths $H_1$
        and $H_2$. These colorings show that every proper induced subgraph of
        $H_1$ and $H_2$ is star $4$-colorable, and hence $H_1$ and $H_2$ are
        minimal obstructions to star $4$-colorability in the class of
        $2$-paths.}
    \label{fig:2-paths_4colors}
\end{figure}

\subsection{\texorpdfstring{Star $4$-coloring of an $\{H_1,H_2\}$-free $2$-path}{Star 4-coloring of an {H1,H2}-free 2-path}}
\label{subsec:2paths-4star-construction}

We define a coloring $c \colon V(G)\to\{\red,\blue,\green,\purple\}$ by
processing the fans $F_1,F_2,\dots,F_t$ in increasing ordering of the indices.
Recall that $d_i=d_{B_G}(u_i)\ge 2$ for each $i\in [t]$ since $u_i \in
V(\spn)$, so it is a non-leaf of $B_G$. Moreover, $d_i>2$ whenever $1<i<t$,
because $G$ is $H_1$-free, and  if there is $j\in [t]$ such that $d_j
\ge 4$, then $d_{j-1},d_{j+1}\le 3$, because $G$ is $H_2$-free.

\smallskip
\noindent\textit{Step 1: coloring the fan $F_1$.} Set $c(u_1)=\red$.
\begin{itemize}
    \item If $d_1=3$, then the fan $F_1$ has rim path
        $(v_{1,0},v_{1,1},v_{1,2},v_{1,3},v_{1,4})$. We assign
        \[
        c(v_{1,0})=c(v_{1,2})=c(v_{1,4})=\blue,\qquad
        c(v_{1,1})=\green,\qquad
        c(v_{1,3})=\purple.
        \]
    \item If $d_1\neq 3$, we color the rim vertices
        $v_{1,0},v_{1,1},\dots,v_{1,d_1}$ by repeating the pattern
        \[
        \blue,\green,\purple,\blue,\green,\purple,\dots
        \]
        in this order from $v_{1,0}$ up to $v_{1,d_1}$. We then color the
        boundary vertex $v_{1,d_1+1}$ as follows:
        \begin{itemize}
            \item If $t\ge 2$ and $d_2=3$, then set
                \[
                c(v_{1,d_1+1})=c(v_{1,d_1-1}).
                \]
            \item Otherwise, set $c(v_{1,d_1+1})$ to be the unique color not
                used in the triangle induced by $\{u_1,v_{1,d_1},v_{1,d_1-1}\}$.
    \end{itemize}
\end{itemize}

\smallskip
\noindent\textit{Step $i+1$: coloring the fan $F_{i+1}$ (for $i\ge 1$).} Assume
that we have already colored the vertices of the fans $F_1,\dots,F_{i}$. By the
overlap relations above, the vertices
\[
u_{i+1}=v_{i,d_i},\quad v_{i+1,0}=v_{i,d_i-1},\quad v_{i+1,1}=u_i,\quad
v_{i+1,2}=v_{i,d_i+1}
\]
are already colored. It remains to color $v_{i+1,3},\dots,v_{i+1,d_{i+1}+1}$.

\begin{itemize}
    \item If $d_{i+1}=3$, then we set $c(v_{i+1,3})$ to be the unique color that
        does not appear in the triangle induced by
        $\{u_{i+1},v_{i+1,1},v_{i+1,2}\}$, and we set
        \[
        c(v_{i+1,4})=c(v_{i+1,2}).
        \]
    \item If $d_{i+1}\neq 3$, then for each $j\in\{3,4,\dots,d_{i+1}\}$, in
        increasing order, we define $c(v_{i+1,j})$ to be the unique color not
        used in the triangle induced by $\{u_{i+1},v_{i+1,j-1},v_{i+1,j-2}\}$.
        Finally, we color the boundary vertex $v_{i+1,d_{i+1}+1}$ by the same
        rule, unless $i+2\le t$ and $d_{i+2}=3$, in which case we set
        \[
        c(v_{i+1,d_{i+1}+1})=c(v_{i+1,d_{i+1}-1}).
        \]
\end{itemize}
The above procedure defines a coloring $c$ of $G$ with four colors. See Figure~\ref{fig:coloring_k=4} for an example of the coloring $c$ applied to
a $2$-path $G$ that is $\{H_1,H_2\}$-free.

\begin{figure}[t]
    \centering
    \includegraphics[width=0.37\textwidth]{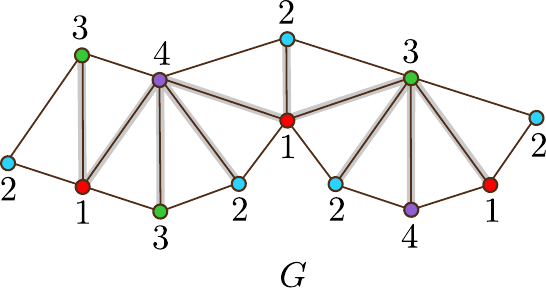}
    \caption{A star $4$-coloring of $G$ given by the coloring $c$ defined in
        Subsection~\ref{subsec:2paths-4star-construction}. The caterpillar $B_G$
        is highlighted in gray. Vertex labels encode the colors, with $1=\red$,
        $2=\blue$, $3=\green$, and $4=\purple$.}
    \label{fig:coloring_k=4}
\end{figure}

We first show that $c$ is proper, and describe how the bicolored paths on
three vertices are distributed through $G$.

\begin{lemma}
\label{lemma:4-coloring}
    The coloring $c:V(G)\to \{\red,\blue,\green,\purple\}$ is proper. Moreover,
    for any induced path $Q$ on three vertices contained in some fan $F_i$, if
    $Q$ is bicolored, then one of the following holds:
    \[
        Q=(v_{i,0},v_{i,1},v_{i,2}),\qquad
        Q=(v_{i,d_i-1},v_{i,d_i},v_{i,d_i+1}),
    \]
    or
    \[
        Q=(v_{i,j},u_i,v_{i,\ell})
        \quad\text{for some }0\le j<\ell\le d_i+1
        \text{ with }\ell\ge j+2.
    \]
\end{lemma}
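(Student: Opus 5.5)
We argue fan by fan, following the order in which the coloring was built, and we keep two invariants in mind. The first is that the center $u_i$ gets a color different from every vertex on the rim path $v_{i,0},\dots,v_{i,d_i+1}$. The second is that any three consecutive rim vertices $v_{i,j-1},v_{i,j},v_{i,j+1}$ receive pairwise distinct colors, except possibly at the two ends of the rim. The edges of $G$ inside $F_i$ are exactly the spokes $u_iv_{i,j}$ and the rim edges $v_{i,j}v_{i,j+1}$. Every edge of $G$ lies in some fan, and every vertex colored at step $i+1$ is a rim vertex of $F_{i+1}$. So properness reduces to checking the first invariant together with distinctness of consecutive rim colors. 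Both follow from the ``unique color not used in the triangle $\{u,v_{j-1},v_{j-2}\}$'' rule. The exceptional assignments $c(v_{i,d_i+1})=c(v_{i,d_i-1})$, $c(v_{i+1,4})=c(v_{i+1,2})$, and the explicit $d_1=3$ pattern are checked directly: each such vertex is adjacent only to $u_i$ and $v_{i,d_i}$ within the fan, and these differ from the copied color.

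Step 1 is to verify, by induction on $i$, that the pre-colored vertices of $F_{i+1}$ are consistent with the invariants. These vertices are $u_{i+1}=v_{i,d_i}$, $v_{i+1,0}=v_{i,d_i-1}$, $v_{i+1,1}=u_i$ and $v_{i+1,2}=v_{i,d_i+1}$. By the overlap relations, $\{u_{i+1},v_{i+1,0},v_{i+1,1}\}$ and $\{u_{i+1},v_{i+1,1},v_{i+1,2}\}$ are triangles of $F_i$, hence already properly colored. In particular $c(u_{i+1})$ differs from $c(v_{i+1,0}),c(v_{i+1,1}),c(v_{i+1,2})$. The ``fourth color'' rule then shows that $c(u_{i+1})$ never reappears on the rim. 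This is the point where we need that only four colors exist and that each new rim vertex is forced to avoid $u_{i+1}$.

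Step 2 classifies bicolored induced $P_3$'s in $F_i$. An induced $P_3$ in a fan either has middle vertex $u_i$ and nonconsecutive rim ends, which is the third listed type, or lies on the rim as $(v_{i,j-1},v_{i,j},v_{i,j+1})$. A rim $P_3$ is bicolored exactly when $c(v_{i,j-1})=c(v_{i,j+1})$. For interior $j$ this is excluded by the triangle rule, since $v_{i,j+1}$ avoids the colors of $u_i,v_{i,j},v_{i,j-1}$. The only exceptions are the first rim triple (where $v_{i,0},v_{i,1},v_{i,2}$ were inherited from $F_{i-1}$ and may repeat) and the last triple (where the boundary vertex may deliberately copy $v_{i,d_i-1}$). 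These are precisely the first two listed types.

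The main obstacle I expect is the $d=3$ special rules and the start of each fan. In the fan $F_{i+1}$ with $d_{i+1}=3$, the assignment $c(v_{i+1,4})=c(v_{i+1,2})$ creates a repeat at positions $2,4$, i.e. the triple $(v_{i+1,2},v_{i+1,3},v_{i+1,4})$. Since $d_{i+1}=3$, this triple is $(v_{d-1},v_d,v_{d+1})$, so it is the allowed last triple. At the start of the fan we must also confirm that $v_{i+1,0},v_{i+1,2}$ is the only possible inherited repeat, and that the triple $(v_{i+1,1},v_{i+1,2},v_{i+1,3})$ is rainbow. The latter holds because $v_{i+1,3}$ is chosen as the fourth color of $\{u_{i+1},v_{i+1,1},v_{i+1,2}\}$. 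For Step 1 with $d_1=3$, the explicit pattern blue, green, blue, purple, blue gives repeats at $(0,1,2)$ and $(2,3,4)$ only, both allowed. I would check each case by a short table of colors.
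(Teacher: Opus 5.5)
Your proposal is correct and follows essentially the paper's route. Both arguments go by induction along the fans: the inherited vertices $u_{i+1},v_{i+1,0},v_{i+1,1},v_{i+1,2}$ lie in two triangles of $F_i$, the fourth-color rule gives properness and rainbow interior rim triples, and the copy rules together with the explicit $d_1=3$ pattern leave only the first and last rim triples as possibly bicolored. The paper instead normalizes colors, splits into the cases $d_i=2$ (using $H_1$-freeness), $d_i=3$ and $d_i\ge 4$, and treats interior triples via a period-three repetition claim; your two invariants cover all values of $d_i$ uniformly, but this is only a presentational difference.
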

\begin{proof}
    Let $F_1,\dots,F_t$ be the maximal fans of $G$. We first show that $c$ is
    proper. It is enough to prove that the restriction of $c$ to each fan $F_i$
    is proper. For $i=1$ this follows directly from the definition of $c$. Fix
    $i>1$ and suppose that the restriction of $c$ to each $F_j$ with $j<i$ is
    proper.

    By the overlap relations, the vertices
    \[
        u_i=v_{i-1,d_{i-1}},\qquad
        v_{i,0}=v_{i-1,d_{i-1}-1},\qquad
        v_{i,1}=u_{i-1},\qquad
        v_{i,2}=v_{i-1,d_{i-1}+1}
    \]
    are already colored. Since the triangle induced by $\{u_i,v_{i,1},v_{i,2}\}$
    belongs to $F_{i-1}$, by the induction hypothesis its vertices receive three
    distinct colors. Thus, after possibly permuting the colors, we may assume
    that
    \[
        c(u_i)=\red,\qquad c(v_{i,1})=\blue,\qquad c(v_{i,2})=\green.
    \]

    If $d_i=3$, then by definition
    \[
        c(v_{i,3})=\purple
        \qquad\text{and}\qquad
        c(v_{i,4})=c(v_{i,2})=\green.
    \]
    Since the only neighbors of $v_{i,4}$ in $F_i$ are $u_i$ and $v_{i,3}$, the
    fan $F_i$ is properly colored.

    If $d_i=2$, then, since $G$ is $H_1$-free, we must have $i=t$. By the
    coloring rule, $v_{i,3}$ receives the unique color not used in the triangle
    induced by $\{u_i,v_{i,1},v_{i,2}\}$, namely $\purple$. Hence $F_i$ is
    properly colored.

    Finally, suppose that $d_i\ge 4$. By the coloring rule, the vertices
    $v_{i,1},v_{i,2},\dots,v_{i,d_i}$ are colored periodically with three
    colors:
    \[
        \blue,\green,\purple,\blue,\green,\purple,\dots .
    \]
    Thus the rim edges among $v_{i,1},\dots,v_{i,d_i}$ are properly colored, and
    no vertex in this part receives color $\red$, so all edges incident with the
    center $u_i$ are properly colored. It remains to consider the boundary
    vertex $v_{i,d_i+1}$. By definition, this vertex receives either
    $c(v_{i,d_i-1})$ or the unique color not used in the triangle induced by
    $\{u_i,v_{i,d_i},v_{i,d_i-1}\}$; in the latter case this color is
    $c(v_{i,d_i-2})$. In both cases, its color is distinct from $c(u_i)$ and
    from $c(v_{i,d_i})$, which are precisely the colors of its neighbors in
    $F_i$. Hence $F_i$ is properly colored.

    Therefore, the restriction of $c$ to each fan $F_i$ is proper. Since every
    edge of $G$ belongs to some fan, $c$ is a proper coloring of $G$.

    \medskip
    We now prove the second part of the statement. Let $Q$ be an induced path on
    three vertices contained in some fan $F_i$, and suppose that $Q$ is
    bicolored.

    First assume that $Q$ is contained in the rim path of $F_i$. Then
    \[
        Q=(v_{i,j},v_{i,j+1},v_{i,j+2})
    \]
    for some $0\le j\le d_i-1$. If $j=0$, then $Q=(v_{i,0},v_{i,1},v_{i,2})$,
    and if $j=d_i-1$, then $Q=(v_{i,d_i-1},v_{i,d_i},v_{i,d_i+1})$; these are
    precisely the two boundary paths listed in the statement. Thus we may assume
    that $1\le j\le d_i-2$. In this case $Q$ is contained in the path
    \[
        (v_{i,1},v_{i,2},\dots ,v_{i,d_i}).
    \]
    By the definition of $c$, the colors on this path repeat periodically with
    period three: for every $q\in\{3,\dots,d_i\}$ we have
    \[
        c(v_{i,q})=c(v_{i,q-3}).
    \]
    Hence any three consecutive vertices on this path receive three distinct
    colors, contradicting that $Q$ is bicolored. Therefore, if a bicolored
    induced path on three vertices is contained in the rim path, then it must be
    one of the two boundary paths listed in the statement.

    It remains to consider the case in which $Q$ contains the center $u_i$.
    Since $c$ is proper, the center cannot have the same color as either of its
    neighbors in $Q$. Hence, if $Q$ is bicolored, then the two endpoints of $Q$
    must have the same color. Thus $u_i$ is the middle vertex of $Q$, and we can
    write
    \[
        Q=(v_{i,j},u_i,v_{i,\ell})
    \]
    for some $0\le j<\ell\le d_i+1$. Since $Q$ is induced, the vertices
    $v_{i,j}$ and $v_{i,\ell}$ are not adjacent in the rim path of $F_i$; hence
    $\ell\ge j+2$. This is precisely the third possibility in the statement.
\end{proof}

We are now ready to prove the characterization of star $4$-colorable
$2$-paths. 

\begin{theorem}
\label{thm:2paths-k4}
    Let $G$ be a $2$-path. Then $G$ is star $4$-colorable if and only if $G$ is
    $\{H_1,H_2\}$-free, where $H_1$ and $H_2$ are the graphs depicted in
    Figure~\ref{fig:2-paths_k=4}. In particular, $H_1$ and $H_2$ are exactly the
    minimal obstructions to star $4$-colorability within the class of
    $2$-paths.
\end{theorem}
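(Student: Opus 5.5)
The forward direction is immediate. By Proposition~\ref{pro:graphs_H1H2}, neither $H_1$ nor $H_2$ is star $4$-colorable. Star $4$-colorability is hereditary, so a star $4$-colorable $2$-path cannot contain either graph as an induced subgraph.

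For the converse, let $G$ be an $\{H_1,H_2\}$-free $2$-path. If $G$ is a triangle or a diamond, a star $4$-coloring is immediate. Otherwise I will show that the coloring $c$ of Subsection~\ref{subsec:2paths-4star-construction} is a star $4$-coloring. By Lemma~\ref{lemma:4-coloring}, $c$ is proper. Since $2$-paths are chordal and hence $C_4$-free, it suffices to rule out bicolored induced $P_4$'s.

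Let $P=(w,x,y,z)$ be a bicolored induced $P_4$. By Lemma~\ref{lem:P4-has-P3-in-fan}, some fan $F_i$ contains three vertices of $P$, and these vertices form an induced bicolored $P_3$ in $F_i$. Lemma~\ref{lemma:4-coloring} says this $P_3$ is one of three types:
\begin{enumerate}
\item the boundary path $(v_{i,0},v_{i,1},v_{i,2})$;
\item the boundary path $(v_{i,d_i-1},v_{i,d_i},v_{i,d_i+1})$;
\item a path $(v_{i,j},u_i,v_{i,\ell})$ with $\ell\ge j+2$.
\end{enumerate}
In each case I extend by the fourth vertex, which is a neighbor of an endpoint of the $P_3$, and show that the extension is not bicolored. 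The fourth vertex lies in $F_i$ or in $F_{i\pm1}$, using the overlap relations.

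For type (3), the two rim vertices share a color $\alpha\neq c(u_i)$. The fourth vertex must be a neighbor of $v_{i,j}$ (or $v_{i,\ell}$) colored $c(u_i)$ and nonadjacent to $u_i$, so it lies in an adjacent fan. I would show that rim vertices of $F_i$ that repeat a color are spaced $3$ apart on the periodic part. The exceptional equalities $c(v_{i,d_i+1})=c(v_{i,d_i-1})$ and $c(v_{i,4})=c(v_{i,2})$, which occur when $d_{i+1}=3$ or $d_i=3$, are placed exactly where the neighboring center's color is absent. This uses $d_i\ge 3$ for interior spine vertices, which holds because $G$ is $H_1$-free.

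Types (1) and (2) are the boundary triangles shared between consecutive fans. There I would check that the rule ``copy $c(v_{i,d_i-1})$ when the next fan has degree $3$'' prevents the continuation into $F_{i+1}$ from repeating. This again reduces to a short case analysis on $(d_i,d_{i+1})$.

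The main obstacle is this boundary case analysis: consecutive fans $F_i,F_{i+1}$ with $d_i\ge4$ and $d_{i+1}\in\{2,3\}$, or both equal to $3$. Here the $H_2$-freeness, which gives $d_{i\pm1}\le 3$ whenever $d_i\ge 4$, must be invoked to exclude a long periodic run meeting another long run whose phases clash. I would first normalize colors on the shared triangle $\{u_{i+1},u_i,v_{i,d_i+1}\}$ as in the proof of Lemma~\ref{lemma:4-coloring}, then enumerate the few possible values of $(d_i,d_{i+1})$ modulo the period $3$, and verify directly that no $P_4$ crossing the two fans is bicolored. The final statement about minimal obstructions then follows from Proposition~\ref{pro:graphs_H1H2} together with the equivalence just proved.
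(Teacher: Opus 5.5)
\textbf{Assessment.} Your overall route is the paper's. You use the same coloring $c$ from Subsection~\ref{subsec:2paths-4star-construction}, Lemma~\ref{lemma:4-coloring} for properness and for the three types of bicolored $P_3$, Lemma~\ref{lem:P4-has-P3-in-fan} to localize, and a case analysis on the fourth vertex. One minor point: the statement of Lemma~\ref{lem:P4-has-P3-in-fan} only puts three vertices of $P$ in one fan, and these could be $\{w,x,z\}$, which is not a $P_3$. You need the stronger fact from its proof, that every induced $P_3$ lies in a fan.

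The real problem is that the proposal stops exactly where the argument needs content, and your plan for that part aims at the wrong cases.
\begin{itemize}
\item Type (1) needs no coloring argument. It can be continued outside $F_i$ only if $v_{i,2}=u_{i+1}$, i.e.\ $d_i=2$, hence $i=1$ by $H_1$-freeness. There $v_{1,0},v_{1,1},v_{1,2}$ receive three colors.
\item Type (2) also needs no coloring argument. The only candidate fourth vertex is $v_{i+1,3}$, which is adjacent to $u_{i+1}=v_{i,d_i}$, so $P$ would not be induced. The copy rule plays no role here.
\item The case you single out as the main obstacle, $d_i\ge 4$ with $d_{i+1}\in\{2,3\}$, is immediate. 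The period-$3$ repetition does create the bicolored path $(v_{i,d_i-3},u_i,u_{i+1})$. But when $d_{i+1}\le 3$, the definition of $c$ on $F_{i+1}$ never gives $c(u_i)$ to a vertex of $F_{i+1}$ nonadjacent to $u_i$.
\end{itemize}

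\textbf{The missing case.} Since $c$ is built from left to right, the two orientations of a pair of consecutive fans are not symmetric. The genuine case is the reverse one: $(w,x,y)=(v_{i,j},u_i,u_{i+1})$ with $d_{i+1}\ge 4$. Here $z=v_{i+1,4}$ really does receive $c(v_{i+1,1})=c(u_i)$. One must therefore show that $c(u_{i+1})$ appears on no $v_{i,j}$ with $j\le d_i-2$.
\begin{itemize}
\item $H_2$-freeness gives $d_i\le 3$.
\item If $d_i=2$, then $i=1$, and Step~1 settles it.
\item If $d_i=3$, properness alone only yields $c(v_{i,0})\in\{c(v_{i,2}),c(v_{i,3})\}$. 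If $c(v_{i,0})=c(v_{i,3})=c(u_{i+1})$, then $(v_{i,0},u_i,u_{i+1},v_{i+1,4})$ is a bicolored induced $P_4$.
\end{itemize}
What rules this out is the inherited equality $c(v_{i,0})=c(v_{i,2})$. It comes from how $v_{i-1,d_{i-1}+1}=v_{i,2}$ was colored in $F_{i-1}$: either by the rule for $d_{i-1}=3$, or by the special boundary rule, which is triggered precisely because $d_i=3$. When $i=1$ it comes from Step~1 instead.

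This is a three-fan invariant. A check confined to $F_i\cup F_{i+1}$ after normalizing the shared triangle cannot succeed; you need to state this invariant and carry it along. With it, plus the easy subcase $y=v_{i,d_i+1}$, $z=v_{i+1,3}$ (excluded by Lemma~\ref{lemma:4-coloring}, or directly when $d_{i+1}=2$), your plan becomes exactly the paper's Case~3 and the proof goes through.
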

\begin{proof}
    By Proposition~\ref{pro:graphs_H1H2}, neither $H_1$ nor $H_2$ is star
    $4$-colorable. Hence, if $G$ is star $4$-colorable, then $G$ is
    $\{H_1,H_2\}$-free. 

    Conversely, suppose that $G$ is $\{H_1,H_2\}$-free. Let
    $c:V(G)\to\{\red,\blue,\green,\purple\}$ be the coloring defined above. We
    show that $c$ is a star $4$-coloring of $G$. By
    Lemma~\ref{lemma:4-coloring}, the coloring $c$ is proper, so it remains to
    show that no induced path on four vertices is bicolored. Suppose, for a
    contradiction, that $P=(w,x,y,z)$ is a bicolored induced path on four
    vertices, so the colors on $P$ alternate; in particular, both $(w,x,y)$ and
    $(x,y,z)$ are bicolored induced paths on three vertices.

    First note that $P$ cannot be contained in a single fan. Indeed, suppose
    that $V(P)\subseteq V(F_i)$ for some $i\in[t]$. Since $P$ is induced, it
    must be contained in the rim path of $F_i$, say
    $P=(v_{i,j},v_{i,j+1},v_{i,j+2},v_{i,j+3})$, with $0\le j\le d_i-2$. If
    $d_i>2$, then either $j>0$ or $j+3<d_i+1$. In the first case,
    $(v_{i,j},v_{i,j+1},v_{i,j+2})$ is not one of the two boundary paths listed
    in Lemma~\ref{lemma:4-coloring}; in the second case,
    $(v_{i,j+1},v_{i,j+2},v_{i,j+3})$ is not one of them. Thus one of these two
    subpaths is not bicolored, contradicting that $P$ is bicolored. If $d_i=2$,
    then $i=1$ or $i=t$ because $G$ is $H_1$-free; in this case the rim path of
    $F_i$ has exactly four vertices and, by the definition of the coloring,
    receives at least three colors, again a contradiction.

    Therefore $P$ is not contained in a single fan. By
    Lemma~\ref{lem:P4-has-P3-in-fan}, and after reversing the order of $P$ if
    necessary, we may assume that $(w,x,y)$ is contained in some fan $F_i$ and
    that $z\in V(F_{i+1})\setminus V(F_i)$. By Lemma~\ref{lemma:4-coloring}, the
    bicolored path $(w,x,y)$ is one of the following types:
    \[
        (v_{i,0},v_{i,1},v_{i,2}),\qquad
        (v_{i,d_i-1},v_{i,d_i},v_{i,d_i+1}),
    \]
    or
    \[
        (v_{i,j},u_i,v_{i,\ell})
        \quad\text{for some }0\le j<\ell\le d_i+1
        \text{ with }\ell\ge j+2.
    \]
    We show that none of these possibilities can be extended to the vertex $z\in
    V(F_{i+1})\setminus V(F_i)$.

    \medskip
    \noindent\textbf{Case 1.} Suppose $(w,x,y)=(v_{i,0},v_{i,1},v_{i,2})$. Since
    $z$ is adjacent to $y=v_{i,2}$ and lies outside $F_i$, the vertex $v_{i,2}$
    must be one of the vertices through which $F_i$ meets $F_{i+1}$. This is
    possible only if $d_i=2$, in which case $v_{i,2}=v_{i,d_i}$. Since $G$ is
    $H_1$-free, this forces $i=1$. But then the vertices $w,x,y$ receive the
    colors $\blue,\green,\purple$, respectively, contradicting that the path
    $(w,x,y)$ is bicolored.

    \medskip
    \noindent\textbf{Case 2.} Suppose
    $(w,x,y)=(v_{i,d_i-1},v_{i,d_i},v_{i,d_i+1})$. Then $x=v_{i,d_i}=u_{i+1}$
    and $y=v_{i,d_i+1}=v_{i+1,2}$. Since $z\in V(F_{i+1})\setminus V(F_i)$ is
    adjacent to $y$, the only possible choice is $z=v_{i+1,3}$. But $z$ is
    adjacent to $x=u_{i+1}$, contradicting that $P=(w,x,y,z)$ is induced.

    \medskip
    \noindent\textbf{Case 3.} Suppose $(w,x,y)=(v_{i,j},u_i,v_{i,\ell})$, with
    $\ell\ge j+2$. Thus $x=u_i$ and $y=v_{i,\ell}$. Since $z$ lies outside
    $F_i$, is adjacent to $y$, and is not adjacent to $x=u_i$, we must have
    $y\in\{v_{i,d_i},v_{i,d_i+1}\}$.

    Suppose first that $y=v_{i,d_i+1}$. By the overlap relations,
    $x=u_i=v_{i+1,1}$ and $y=v_{i,d_i+1}=v_{i+1,2}$. Since $P$ is induced, the
    only possible choice is $z=v_{i+1,3}$. If $d_{i+1}=2$, then by the
    definition of $c$ we have $c(z)\ne c(x)$; if $d_{i+1}\ge 3$, then
    Lemma~\ref{lemma:4-coloring} implies that the path
    $(x,y,z)=(v_{i+1,1},v_{i+1,2},v_{i+1,3})$ is not bicolored. In both cases we
    get a contradiction.

    We may therefore assume that $y=v_{i,d_i}=u_{i+1}$. Then $x=u_i=v_{i+1,1}$,
    and $(x,y,z)$ is a bicolored induced path contained in $F_{i+1}$. If
    $d_{i+1}=2$, then, because $P$ is induced, we must have
    $z=v_{i+1,d_{i+1}+1}$, and the definition of $c$ gives $c(z)\ne c(x)$, a
    contradiction. If $d_{i+1}=3$, then $z=v_{i+1,r}$ for some $r\in\{3,4\}$,
    and the definition of $c$ again gives $c(z)\ne c(x)$, a contradiction.

    Thus $d_{i+1}\ge 4$. Since $G$ is $H_2$-free, it follows that $d_i\le 3$. If
    $d_i=2$, then $i=1$ because $G$ is $H_1$-free. Moreover, as $P$ is induced
    and $y=v_{i,2}$, we have $w=v_{i,0}$; hence $c(w)=\blue\ne\purple=c(y)$,
    contradicting that $(w,x,y)$ is bicolored.

    It remains to consider the case $d_i=3$. Then $y=v_{i,3}$ and, since
    $(w,x,y)=(v_{i,j},u_i,v_{i,3})$ is induced, we have $j\in\{0,1\}$. If
    $w=v_{i,1}$, then $c(w)\ne c(y)$ because $c(y)$ is the unique color not used
    in the triangle induced by $\{u_i,v_{i,1},v_{i,2}\}$. Hence we may assume
    that $w=v_{i,0}$. If $i=1$, then the definition of the coloring of $F_1$
    gives $c(v_{i,0})\ne c(v_{i,3})$, again a contradiction. Thus $i>1$. We
    claim that $c(v_{i,0})=c(v_{i,2})$. Indeed, by the overlap relations,
    $v_{i,0}=v_{i-1,d_{i-1}-1}$ and $v_{i,2}=v_{i-1,d_{i-1}+1}$. If $d_{i-1}=3$,
    then the coloring rule gives $c(v_{i-1,4})=c(v_{i-1,2})$, which is precisely
    $c(v_{i,2})=c(v_{i,0})$. If $d_{i-1}\ne 3$, then, since $d_i=3$, the special
    boundary rule applied to $F_{i-1}$ gives
    $c(v_{i-1,d_{i-1}+1})=c(v_{i-1,d_{i-1}-1})$, and again
    $c(v_{i,2})=c(v_{i,0})$. Now $c(y)=c(v_{i,3})$ is the unique color not
    appearing in the triangle induced by $\{u_i,v_{i,1},v_{i,2}\}$. Since
    $c(v_{i,0})=c(v_{i,2})$, we obtain $c(y)\ne c(v_{i,0})=c(w)$, contradicting
    that $(w,x,y)$ is bicolored.

    In all cases we obtain a contradiction. Therefore no induced path on four
    vertices is bicolored, and so $c$ is a star $4$-coloring of $G$. 

    Therefore, $G$ is star $4$-colorable if and only if it is
    $\{H_1,H_2\}$-free. The final statement follows from
    Proposition~\ref{pro:graphs_H1H2}, which shows that $H_1$ and $H_2$ are
    minimal with this property within the class of $2$-paths. 
\end{proof}

\begin{figure}[t]
    \centering
    \includegraphics[width=0.8\textwidth]{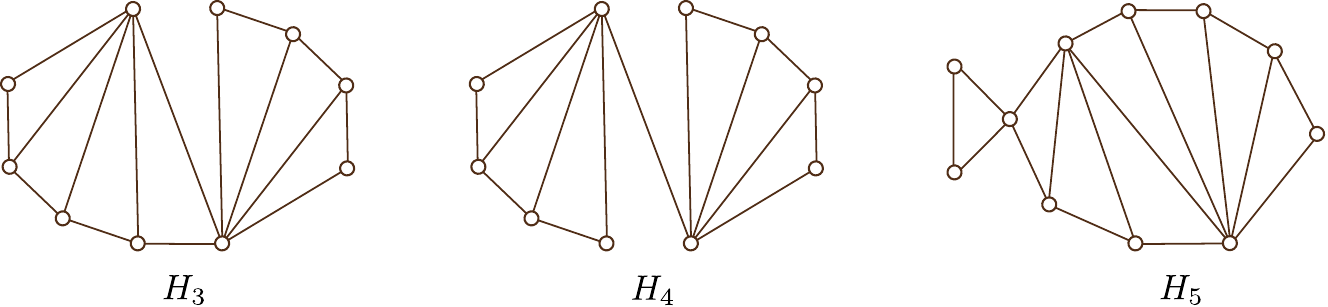}
    \caption{Three minimal obstructions to star $4$-colorability within the class
of partial $2$-paths.}
    \label{fig:H's_1-5}
\end{figure}

We have proved that every $\{H_1,H_2\}$-free $2$-path is star $4$-colorable.
However, the class of $2$-paths is not hereditary: an induced subgraph of a
$2$-path need not be a $2$-path. For this reason, it is natural to consider the
hereditary closure of this class, namely the class of partial $2$-paths. It is
not hard to check that the graphs $H_3,H_4,H_5$ depicted in
Figure~\ref{fig:H's_1-5} are minimal obstructions to star $4$-colorability
within the class of partial $2$-paths.   It is natural to ask whether these
graphs completely characterize star $4$-colorable partial $2$-paths.

\begin{question}\label{ques:partial-2paths-4star}
    Are $H_1,H_2,H_3,H_4,H_5$ the only minimal obstructions to star
    $4$-colorability within the class of partial $2$-paths?
\end{question}


\subsection{\texorpdfstring{$2$-paths are star $5$-colorable}{2-paths are star 5-colorable}}

\label{sec:5-coloring}

Let $G$ be a $2$-path and let $B_G$ be its base caterpillar. Denote by
$F_1,\dots,F_t$ the maximal fans of $G$. If $t=1$, then $G$ is a single fan and
we already know that $G$ is star $4$-colorable. In this subsection, we
nevertheless present a unified construction that yields a star $5$-coloring for
any $2$-path $G$.

\medskip
\noindent\textit{Step 1: coloring the fan $F_1$.} Set $c(u_1)=\red$.
\begin{itemize}
    \item If $d_1=2$, define
    \[
        c(v_{1,0})=\blue,\quad
        c(v_{1,1})=\green,\quad
        c(v_{1,2})=\purple,\quad
        c(v_{1,3})=\orange.
    \]

    \item If $d_1\ge 3$, color the rim vertices
    $v_{1,0},v_{1,1},\dots,v_{1,d_1-1}$ by repeating the pattern
    \[
        \blue,\ \green,\ \purple,\ \blue,\ \green,\ \purple,\ \dots
    \]
    in this order from $v_{1,0}$ up to $v_{1,d_1-1}$. Additionally, we color the
    vertices $v_{1,d_1}$ and $v_{1,d_1+1}$ as follows:
    \begin{itemize}
        \item If $t\ge 2$ and $d_2=2$, set
        \[
            c(v_{1,d_1}) = c(v_{1,d_1-3}),\quad
            c(v_{1,d_1+1})= \text{the unique color not used on }
            \{u_1,v_{1,0},\dots,v_{1,d_1}\}.
        \]

        \item If $t=1$ or $d_2\ge 3$, set
        \[
            c(v_{1,d_1})= \text{the unique color not used on }
            \{u_1,v_{1,0},\dots,v_{1,d_1-1}\},\quad
            c(v_{1,d_1+1}) = c(v_{1,d_1-2}).
        \]
    \end{itemize}
\end{itemize}

\medskip
\noindent\textit{Step $i$: coloring the fan $F_i$ (for $i\ge 2$).} Assume that
we have already colored the vertices of the fans $F_1,\dots,F_{i-1}$. By the
overlap relations, the vertices
\[
u_i=v_{i-1,d_{i-1}},\quad
v_{i,0}=v_{i-1,d_{i-1}-1},\quad
v_{i,1}=u_{i-1},\quad
v_{i,2}=v_{i-1,d_{i-1}+1}
\]
are already colored. It remains to color the vertices
$v_{i,3},\dots,v_{i,d_i+1}$, and we proceed according to the value of $d_i$.

\begin{itemize}
    \item If $d_i=2$, only the boundary vertex $v_{i,3}$ remains uncolored. We
    set
    \[
        c(v_{i,3}) = \text{the unique color not used on }
        \{u_i,v_{i,0},v_{i,1},v_{i,2}\}.
    \]

    \item Suppose $d_i\ge 3$.
    \begin{itemize}
        \item If $t\ge i+1$ and $d_{i+1}=2$, we set
        \[
            c(v_{i,j}) = c(v_{i,j-3}) \qquad \text{for } 3\le j\le d_i,
        \]
        and
        \[
            c(v_{i,d_i+1}) = \text{the unique color not used on }
            \{u_i,v_{i,0},\dots,v_{i,d_i}\}.
        \]

        \item If $t=i$ or $d_{i+1}\ge 3$, we set
        \[
            c(v_{i,j}) = c(v_{i,j-3}) \qquad
            \text{for } 3\le j\le d_i-1 \text{ and for } j=d_i+1,
        \]
        and
        \[
            c(v_{i,d_i}) = \text{the unique color not used on }
            \{u_i,v_{i,0},\dots,v_{i,d_i-1}\}.
        \]
    \end{itemize}
\end{itemize}

\medskip
The above procedure defines a coloring $c$ of $G$ with the five colors
\[
    \{\red,\blue,\green,\purple,\orange\}.
\]
The fan $F_1$ uses at most these five colors, and in the recursive step each new
vertex either repeats a previously used color or is assigned a new color chosen
from this same set of five colors. An example is shown in Figure~\ref{fig:c_5}.

\begin{figure}[t]
    \centering
    \includegraphics[width=1\textwidth]{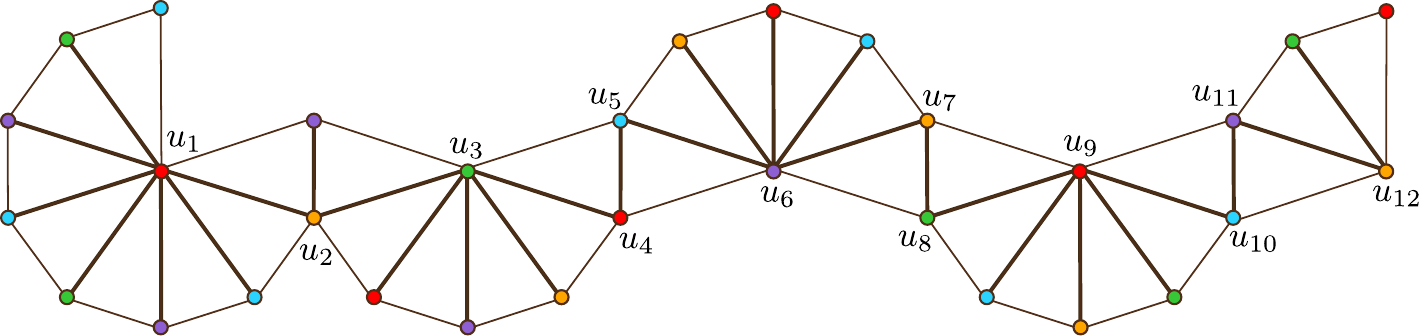}
    \caption{A $2$-path $G$ together with its base caterpillar $B_G$ (drawn with
    thicker edges). The labelled vertices $u_1,\dots,u_{12}$ form the spine of
    $B_G$ and are the centres of the maximal fans $F_1,\dots,F_{12}$ of $G$. The
    vertex colors illustrate the star $5$-coloring $c$ defined in
    Subsection~\ref{sec:5-coloring}. }
    \label{fig:c_5}
\end{figure}

We first show that $c$ is a proper coloring of $G$.

\begin{proposition}
\label{pro:c_proper}
    The coloring $c$ is a proper $5$-coloring of $G$. 
\end{proposition}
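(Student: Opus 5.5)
The plan is to prove, by induction on $i$, that the restriction of $c$ to each fan $F_i$ is proper. This suffices, since every edge of $G$ lies in some fan, exactly as in the proof of Lemma~\ref{lemma:4-coloring}. The edges of $F_i$ are the rim edges $v_{i,j}v_{i,j+1}$ and the spokes $u_iv_{i,j}$. So it is enough to check two things: $c(u_i)$ differs from every rim colour, and consecutive rim vertices get different colours. Along the way I also keep a stronger invariant. Any four consecutive vertices among $u_i, v_{i,0},\dots,v_{i,d_i+1}$ that span two triangles of the fan (centre plus three consecutive rim vertices) receive four distinct colours. This stronger statement is what makes the ``unique colour not used'' rules well defined.

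\textbf{Base case.} For $F_1$, if $d_1=2$ all five vertices get distinct colours, so there is nothing to check. If $d_1\ge 3$, the rim $v_{1,0},\dots,v_{1,d_1-1}$ is coloured periodically with $\blue,\green,\purple$. Hence consecutive rim vertices differ, none of them is $\red$, and the set $\{u_1,v_{1,0},\dots,v_{1,d_1-1}\}$ uses exactly $\{\red,\blue,\green,\purple\}$. Thus ``the unique colour not used'' is $\orange$, which is well defined.

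In the subcase $d_2=2$, the vertex $v_{1,d_1}$ gets $c(v_{1,d_1-3})$. By periodicity this differs from $c(v_{1,d_1-1})$ and is not $\red$. The set $\{u_1,v_{1,0},\dots,v_{1,d_1}\}$ still uses only four colours, so $v_{1,d_1+1}$ receives $\orange$, which differs from both of its neighbours $u_1$ and $v_{1,d_1}$.

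In the other subcase, $v_{1,d_1}$ is $\orange$ and $v_{1,d_1+1}$ gets $c(v_{1,d_1-2})$. This is a non-red colour, distinct from $\orange$, so the fan is properly coloured.

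\textbf{Inductive step.} For $i\ge 2$, the four vertices $u_i, v_{i,0}, v_{i,1}, v_{i,2}$ form the last two triangles $\{u_{i-1},v_{i-1,d_{i-1}-1},v_{i-1,d_{i-1}}\}$ and $\{u_{i-1},v_{i-1,d_{i-1}},v_{i-1,d_{i-1}+1}\}$ of $F_{i-1}$. The key claim is that they carry four distinct colours. I would verify this from the construction of $F_{i-1}$ in each of its subcases ($d_{i-1}=2$, or $d_{i-1}\ge 3$ with the two boundary rules). In each subcase the last two rim vertices and the boundary vertex were chosen so that, together with the centre, they use four distinct colours. For $d_{i-1}=2$ this holds because $v_{i-1,3}$ was given the fifth colour missing from the first four.

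Granting the claim, the fan $F_i$ is handled as follows.
\begin{itemize}
\item If $d_i=2$, the colour of $v_{i,3}$ is the unique missing one, so it is well defined and differs from $u_i$ and $v_{i,2}$.
\item If $d_i\ge 3$, the rule $c(v_{i,j})=c(v_{i,j-3})$ propagates the period-three pattern started by $v_{i,0},v_{i,1},v_{i,2}$. These three vertices have distinct colours, none equal to $c(u_i)$. So the rim up to the special vertex uses exactly the three non-centre colours of those four, consecutive rim vertices differ, and the centre differs from all of them. The special vertex ($v_{i,d_i+1}$ or $v_{i,d_i}$, depending on the subcase) receives the fifth colour, which is absent from the whole fan so far. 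In the second subcase, $v_{i,d_i+1}=c(v_{i,d_i-2})$ is again a pattern colour, distinct from the centre and from the fifth colour on $v_{i,d_i}$.
\end{itemize}
Finally, I re-establish the invariant for the last two triangles of $F_i$. These consist of the centre, two pattern colours and the fifth colour (or the centre and three pattern colours), hence four distinct colours.

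\textbf{Main obstacle.} I expect the hardest step to be the bookkeeping of this invariant across the overlap relations. The difficulty is confirming that $\{u_i,v_{i,0},v_{i,1},v_{i,2}\}$ is rainbow in every combination of cases for $d_{i-1}$ and $d_i$. In particular, this must hold when $d_{i-1}=2$, where the previous fan did not use the periodic rule. I would handle this by stating the invariant explicitly as part of the induction hypothesis and checking it once for each of the three ways a fan can be completed.
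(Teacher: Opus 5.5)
Your proposal is correct and follows essentially the same route as the paper. Both argue by induction over the fans $F_1,\dots,F_t$, use the fact that the shared vertices $u_i,v_{i,0},v_{i,1},v_{i,2}$ (the last two triangles of $F_{i-1}$) get four distinct colours (the paper phrases this as $c(v_{i,0})\ne c(v_{i,2})$), and then note that each new vertex either copies $c(v_{i,j-3})$ or takes the fifth colour. Your explicit rainbow invariant is a mild improvement: it also shows that the ``unique colour not used'' rule is well defined when $d_i=2$, which the paper leaves implicit.
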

\begin{proof}
    As noted above, $c$ uses only five colors, so it remains to show that no
    edge of $G$ is monochromatic. It is sufficient to prove that the restriction
    of $c$ to each $F_i$ is proper. For $i=1$ this follows directly from the
    definition of $c$. Fix $i>1$ and suppose the statement holds for each $F_j$
    with $j<i$.
    
    If $d_i=2$, then $c(v_{i,3})$ is chosen as the unique color not used on
    $\{u_i,v_{i,0},v_{i,1},v_{i,2}\}$. In particular, $c(v_{i,3})$ differs from
    the colors on its neighbors in $F_i$ (which are $u_i$ and $v_{i,2}$), so no
    edge of $F_i$ incident with $v_{i,3}$ is monochromatic. All remaining edges
    of $F_i$ lie in $F_{i-1}$, and hence are properly colored by the induction
    hypothesis.
    
    Suppose now that $d_i\ge 3$. First note that $c(v_{i,0})\ne c(v_{i,2})$.
    Indeed, if $d_{i-1}=2$, this follows directly from the definition of
    $c(v_{i-1,3})$, where $v_{i-1,3}=v_{i,2}$; if $d_{i-1}\ge 3$, then
    \[
        c(v_{i-1,d_{i-1}+1})\ne c(v_{i-1,d_{i-1}-1}),
    \]
    where $v_{i-1,d_{i-1}+1}=v_{i,2}$ and $v_{i-1,d_{i-1}-1}=v_{i,0}$. Thus
    $c(u_i),c(v_{i,0}),c(v_{i,1}),c(v_{i,2})$ are four distinct colors.
    
    Moreover, for each $3\le j\le d_i+1$ either $c(v_{i,j})=c(v_{i,j-3})$ or
    $c(v_{i,j})$ is chosen as the unique color not used on the previously
    colored neighbors of $v_{i,j}$ listed in the construction. In either case,
    $v_{i,j}$ receives a color different from that of each of its neighbors in
    $F_i$. Hence, no edge of $F_i$ is monochromatic in this case either. 
    
    Therefore, the restriction of $c$ to $F_i$ is proper for every $i\in[t]$,
    and $c$ is a proper $5$-coloring of $G$.
\end{proof}

We now show some basic properties of the coloring $c$.

\begin{proposition}
\label{pro:c_basic}
    Let $G$ be a $2$-path with the coloring $c$ defined above. For each maximal
    fan $F_i$ of $G$, the following holds.
    \begin{enumerate}[label=$(\roman*)$]
        \item If $d_i=2$, then $V(F_i)$ uses all five colors, each color
            appearing exactly once.
        \item For each $j\in \{2,\dots,d_i+1\}$ we have $c(v_{i,j})\ne
            c(v_{i,j-2})$. Consequently, there is no bicolored path on three
            vertices contained in the rim path of $F_i$.
    \end{enumerate}
    In particular, if $Q$ is a bicolored induced path on three vertices in $G$,
    then $Q$ is of the form $(v_{i,j},u_i,v_{i,\ell})$, for some $i\in[t]$ and
    $0\le j<\ell\le d_i+1$, with $\ell\ge j+3$ and $d_i\ge 3$. 
\end{proposition}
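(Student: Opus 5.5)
I will argue fan by fan, following the order in which the coloring $c$ was built, and I will use the overlap relations $u_i=v_{i-1,d_{i-1}}$, $v_{i,0}=v_{i-1,d_{i-1}-1}$, $v_{i,1}=u_{i-1}$, $v_{i,2}=v_{i-1,d_{i-1}+1}$. In the proof of Proposition~\ref{pro:c_proper} we saw that $c(u_i),c(v_{i,0}),c(v_{i,1}),c(v_{i,2})$ are pairwise distinct whenever $d_i\ge 3$ or $i\ge 2$. I will first check that the same holds when $d_i=2$, and for $F_1$ it holds directly from the definition. Item $(i)$ then follows for $i\ge 2$, because $c(v_{i,3})$ is chosen as the unique colour missing from four distinct colours. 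For $i=1$ with $d_1=2$ it is immediate from the explicit colouring. The one point to verify is that $c(v_{i,0})\ne c(v_{i,2})$ when $d_i=2$ and $i\ge2$. If $d_{i-1}=2$, this holds because $v_{i,2}=v_{i-1,3}$ received the colour missing from a set containing $v_{i-1,1}=v_{i,0}$. If $d_{i-1}\ge 3$, the branch ``$d_{i}=2$'' was used when colouring $F_{i-1}$, so $c(v_{i-1,d_{i-1}+1})$ is the colour missing from the whole rim up to $v_{i-1,d_{i-1}}$, and that rim contains $v_{i,0}$.

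For $(ii)$ I will split by $d_i$. If $d_i=2$, the five vertices of $F_i$ carry five distinct colours by $(i)$, so there is nothing to prove. If $d_i\ge3$, the vertices $v_{i,0},v_{i,1},v_{i,2}$ carry three distinct colours. The periodic rule $c(v_{i,j})=c(v_{i,j-3})$ propagates this, so any three consecutive rim vertices inside the periodic part are rainbow. The only exceptional indices are the single vertex that receives the ``unique unused colour''. This is $v_{i,d_i+1}$ in the first subcase and $v_{i,d_i}$ in the second. In the second subcase, $v_{i,d_i+1}$ copies $v_{i,d_i-2}$. I will check $c(v_{i,j})\ne c(v_{i,j-2})$ at these indices directly: the fresh colour differs from every earlier rim colour, and the copied colour $c(v_{i,d_i-2})$ differs from $c(v_{i,d_i-1})$ (they are rim neighbours) and from the fresh colour $c(v_{i,d_i})$. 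For $F_1$ with $d_1\ge3$ the check is the same with the explicit pattern, including the $d_2=2$ subcase, where $c(v_{1,d_1})=c(v_{1,d_1-3})$.

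For the final claim, I take a bicolored induced $P_3$, call it $Q$. By the argument in the proof of Lemma~\ref{lem:P4-has-P3-in-fan}, $Q$ lies in some fan $F_i$. If $Q$ lies in the rim path, it is excluded by $(ii)$. Otherwise $Q$ contains $u_i$, and properness forces $u_i$ to be the middle vertex, giving $Q=(v_{i,j},u_i,v_{i,\ell})$ with $\ell\ge j+2$. The case $\ell=j+2$ is excluded by $(ii)$. The case $d_i=2$ is excluded by $(i)$, since then the endpoints would have distinct colours. Hence $\ell\ge j+3$ and $d_i\ge3$.

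The main obstacle is the bookkeeping at the two ends of each fan, namely the boundary vertices. Their colours are inherited from $F_{i-1}$ or chosen with a look-ahead to $d_{i+1}$, so the verification that the inherited triple is rainbow needs the case analysis on $d_{i-1}$ above. I would do this analysis first, as a separate claim, before proving $(i)$ and $(ii)$.
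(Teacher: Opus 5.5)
Your proposal is correct and follows essentially the same route as the paper. Both go fan by fan, use the rainbow inherited quadruple $u_i,v_{i,0},v_{i,1},v_{i,2}$, apply the periodic and fresh-colour rules for $(ii)$, and use the fact that every induced $P_3$ lies in a fan (together with $(i)$ and $(ii)$) for the final claim. The only real difference is how you get $c(v_{i,0})\ne c(v_{i,2})$ when $d_i=2$: you read it off the look-ahead rule used on $F_{i-1}$, whereas the paper obtains it from $(ii)$ for $F_{i-1}$ via the induction hypothesis. Also, your phrase ``whenever $d_i\ge 3$ or $i\ge 2$'' should say ``and''.
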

\begin{proof}
    We proceed by induction on $i$. It is straightforward to check that
    properties $(i)$ and $(ii)$ hold for $F_1$. Fix $i>1$ and assume that the
    proposition holds for all $F_j$ with $j<i$.

    \smallskip
    \noindent\textit{Property $(i)$.} Suppose that $d_i=2$. Since
    $\{u_i,v_{i,0},v_{i,1}\}$ induces a triangle in $G$, they receive pairwise
    distinct colors (because $c$ is a proper coloring by
    Proposition~\ref{pro:c_proper}). Moreover, by property~$(ii)$ applied to
    $F_{i-1}$, we have $c(v_{i,2})\ne c(v_{i,0})$, and as
    $\{u_i,v_{i,1},v_{i,2}\}$ induces also a triangle, we obtain
    \[
        c(v_{i,2})\notin\{c(v_{i,1}),c(u_i)\}.
    \]
    Finally, by definition, $c(v_{i,3})$ is chosen as the unique color not used
    on $\{u_i,v_{i,0},v_{i,1},v_{i,2}\}$. Hence, all five colors appear on
    $V(F_i)=\{u_i,v_{i,0},v_{i,1},v_{i,2},v_{i,3}\}$, each exactly once, and
    $(i)$ holds.

    \smallskip
    \noindent\textit{Property $(ii)$.} The second part of $(ii)$ follows
    immediately from the first: if a subpath $(x,y,z)$ of the rim were
    bicolored, then $c(x)=c(z)$ and we would have $c(v_{i,j})=c(v_{i,j-2})$ for
    the appropriate index $j$. Thus, it suffices to show that $c(v_{i,j})\ne
    c(v_{i,j-2})$ for every $j\in\{2,\dots,d_i+1\}$. 

    If $d_i=2$, then $V(F_i)$ uses five distinct colors by $(i)$, so
    $c(v_{i,j})\ne c(v_{i,j-2})$ holds trivially for $j=2,3$. Hence, we may
    assume $d_i\ge 3$.

    By the induction hypothesis applied to $F_{i-1}$, we have that the vertices
    $v_{i,0}=v_{i-1,d_{i-1}-1}$ and $v_{i,2}=v_{i-1,d_{i-1}-1}$ receive distinct
    colors. For $3\le j\le d_i-1$ the definition of $c$ gives
    \[
        c(v_{i,j})=c(v_{i,j-3})\ne c(v_{i,j-2}),
    \]
    where the inequality follows from the fact that $v_{i,j-3}$ and $v_{i,j-2}$
    are adjacent and thus receive distinct colors because $c$ is proper by
    Proposition~\ref{pro:c_proper}. 

    It remains to consider $j\in\{d_i,d_i+1\}$. By the construction, for each of
    these indices either
    $c(v_{i,j})=c(v_{i,j-3})\ne c(v_{i,j-2})$,
    or $c(v_{i,j})$ is chosen as the unique color not used on the set
    $\{u_i,v_{i,0},\dots,v_{i,j-1}\}$, and in particular is different from
    $c(v_{i,j-2})$. Thus in all cases $c(v_{i,j})\ne c(v_{i,j-2})$ for every
    $j\in\{2,\dots,d_i+1\}$, and $(ii)$ follows.

    We now show the final assertion. By Lemma~\ref{lem:P4-has-P3-in-fan}, $Q$
    must be contained in some fan $F_i$. If $Q$ were contained in the rim path
    of $F_i$, then it would be a path on three consecutive rim vertices. This is
    impossible by $(i)$ and $(ii)$, since no path on three vertices contained in
    the rim path of $F_i$ is bicolored. Hence, $Q$ must contain the center
    $u_i$. Since $c$ is proper, $u_i$ must be the internal vertex of $Q$, and so
    the two endpoints of $Q$ must have the same color. Thus, we may write
\[
    Q=(v_{i,j},u_i,v_{i,\ell}) \quad \textrm{for some $0\le j<\ell\le d_i+1$.}
\]

By $(i)$ the five vertices of $F_i$ receive pairwise distinct colors, so $d_i\ge
3$. Finally, since $Q$ is induced, the endpoints $v_{i,j}$ and $v_{i,\ell}$ are
not adjacent on the rim path, and by $(ii)$, we conclude that  $\ell\ge j+3$, as
claimed. 
\end{proof}

We are now ready to prove the star $5$-colorability of $2$-paths.

\begin{theorem}
\label{thm:c-5-star}
Every $2$-path is star $5$-colorable.
\end{theorem}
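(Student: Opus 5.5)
We show that the coloring $c$ of Subsection~\ref{sec:5-coloring} is a star $5$-coloring of $G$. The structure mirrors the proof of Theorem~\ref{thm:2paths-k4}. By Proposition~\ref{pro:c_proper}, $c$ is a proper $5$-coloring. Since $2$-paths are chordal and hence $C_4$-free, it suffices to rule out bicolored \emph{induced} paths on four vertices.

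Suppose $P=(w,x,y,z)$ is such a path. Then both $(w,x,y)$ and $(x,y,z)$ are bicolored induced $P_3$'s. By the final assertion of Proposition~\ref{pro:c_basic}, each of them has a spine vertex $u_i$ as its middle vertex, with endpoints on the rim of $F_i$ at rim distance at least $3$, and $d_i\ge 3$. So $x=u_i$ and $y=u_{i'}$ are both spine vertices. Since they are adjacent, $i'=i\pm1$, and after reversing $P$ we may assume $x=u_i$ and $y=u_{i+1}$. The overlap relations then give $y=v_{i,d_i}$ and $x=v_{i+1,1}$.

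The first condition says $w=v_{i,j}$ with $j\le d_i-3$ and $c(w)=c(u_{i+1})=c(v_{i,d_i})$. The second says $z=v_{i+1,\ell}$ with $\ell\ge 4$ and $c(z)=c(u_i)=\red$-type, that is, $z$ has the color of $u_i$. Note also $d_i,d_{i+1}\ge 3$. The contradiction comes from the first condition alone. Since $d_{i+1}\ge 3$, the construction of $F_i$ falls under the branch ``$t=i$ or $d_{i+1}\ge 3$''. Thus $c(v_{i,d_i})$ is the unique color not used on $\{u_i,v_{i,0},\dots,v_{i,d_i-1}\}$, so it differs from $c(v_{i,j})$ for every $j\le d_i-1$. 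In particular $c(w)\neq c(y)$, a contradiction.

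This works for $i\ge 2$ and also for $i=1$, since Step~1 has the same dichotomy with $v_{1,d_1}$ receiving the unique unused color when $d_2\ge3$. One must check that this color is well defined, i.e.\ that $\{u_i,v_{i,0},\dots,v_{i,d_i-1}\}$ uses exactly four colors. This holds because the rim vertices $v_{i,0},v_{i,1},v_{i,2}$ carry three distinct colors, as noted in the proof of Proposition~\ref{pro:c_proper}, and the remaining ones repeat them with period three.

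The main obstacle is making sure the case analysis is exhaustive. This means checking that the reduction to two adjacent spine centers is forced, including the situation where $y$ is a boundary vertex. Proposition~\ref{pro:c_basic} handles this because it forces every bicolored induced $P_3$ to have a center as its middle vertex. It also means checking that the indexing of $y=v_{i,d_i}$ matches the branch of the construction in which $v_{i,d_i}$ receives the new color. If instead $d_{i+1}=2$, the second condition fails, because $F_{i+1}$ is rainbow by Proposition~\ref{pro:c_basic}(i). So the case split on $d_{i+1}$ is covered either way.
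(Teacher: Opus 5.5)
Your reduction is neater than the paper's. Applying the last assertion of Proposition~\ref{pro:c_basic} to both $(w,x,y)$ and $(x,y,z)$ forces $x$ and $y$ to be fan centers, whereas the paper places $(w,x,y)$ in $F_i$ and splits on $y\in\{v_{i,d_i},v_{i,d_i+1}\}$. Your treatment of $\{x,y\}=\{u_i,u_{i+1}\}$ is the paper's closing argument and is correct.

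\textbf{The gap.} The sentence ``since they are adjacent, $i'=i\pm1$'' is false. By the overlap relations, $v_{i,d_i+1}=v_{i+1,2}$, while $u_{i+2}=v_{i+1,d_{i+1}}$. So when $d_{i+1}=2$, the boundary vertex $v_{i,d_i+1}$ of $F_i$ \emph{is} the center $u_{i+2}$, and $u_iu_{i+2}\in E(G)$. Both of these centers may have $d\ge 3$. For example, for $\sigma(G)=(3,2,3)$ one gets $u_3=v_{1,4}$. Then $(v_{1,1},u_1,u_3,v_{3,3})$ is an induced $P_4$. Its two sub-$P_3$'s have exactly the shape $(v_{i,j},u_i,v_{i,\ell})$ with $\ell\ge j+3$ that Proposition~\ref{pro:c_basic} permits, so only the actual colors can exclude it, and your case analysis never examines it. This is precisely the ``$y$ is a boundary vertex'' situation you raised and then dismissed. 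Proposition~\ref{pro:c_basic} does force $y$ to be a center, but a boundary vertex can itself be a center.

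\textbf{The repair.} The only spine vertices on the rim of $F_i$ are:
\begin{itemize}
\item $u_{i\pm1}$;
\item $v_{i,d_i+1}=u_{i+2}$, when $d_{i+1}=2$;
\item $v_{i,0}=u_{i-2}$, when $d_{i-1}=2$.
\end{itemize}
So after reversing $P$, the single missing case is $x=u_i$, $y=u_{i+2}=v_{i,d_i+1}$ with $d_{i+1}=2$. Here the first sub-path gives $w=v_{i,j}$ with $j\le d_i-2$. In this case $F_i$ was colored in the branch $d_{i+1}=2$ (this holds in Step~1 as well as in Step~$i$). Hence $c(v_{i,d_i+1})$ is the unique color absent from $\{u_i,v_{i,0},\dots,v_{i,d_i}\}$, so $c(w)\ne c(y)$, a contradiction.

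The paper's proof does isolate this case as $y=v_{i,d_i+1}$. However, it dismisses it by taking $z=v_{i+1,3}$, which presumes $z\in V(F_{i+1})$. When $d_{i+1}=2$, $z$ may lie in $F_{i+2}\setminus F_{i+1}$, as in the example above. So this color argument is the right way to close the case in either write-up.
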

\begin{proof}
    Let $G$ be a $2$-path, and let $c$ be the coloring of $G$ defined above. We
    show that $c$ is a star $5$-coloring of $G$. By
    Proposition~\ref{pro:c_proper}, the coloring $c$ is proper. We show that no
    path on four vertices is bicolored. Since $G$ is chordal and $c$ is proper,
    it is enough to consider induced paths on four vertices.

    Suppose, for a contradiction, that $P=(w,x,y,z)$ is a bicolored induced path
    on four vertices. Since $c$ is proper, the colors on $P$ alternate; in
    particular, both $(w,x,y)$ and $(x,y,z)$ are bicolored induced paths on
    three vertices.

    If all vertices of $P$ were contained in a single fan $F_i$, then, since $P$
    is induced, $P$ would have to be contained in the rim path of $F_i$. But
    this is impossible by Proposition~\ref{pro:c_basic}, because then some path
    on three consecutive rim vertices would be bicolored. Thus $P$ is not
    contained in a single fan. By  Lemma~\ref{lem:P4-has-P3-in-fan}, every
    induced path on three vertices is contained in some fan. Hence, after
    reversing the order of $P$ if necessary, we may assume that $(w,x,y)$ is
    contained in some fan $F_i$ and that $z\in V(F_{i+1})\setminus V(F_i)$. By
    Proposition~\ref{pro:c_basic}, the bicolored path $(w,x,y)$ must be of the
    form $(w,x,y)=(v_{i,j},u_i,v_{i,\ell})$, where $0\le j<\ell\le d_i+1$,
    $\ell\ge j+3$, and $d_i\ge 3$. In particular, $x=u_i$, $w=v_{i,j}$ and
    $y=v_{i,\ell}$, with $c(w)=c(y)$.

    Since $z\notin V(F_i)$, the vertex $y=v_{i,\ell}$ must be one of the
    vertices through which $F_i$ meets $F_{i+1}$. Thus
    $y\in\{v_{i,d_i},v_{i,d_i+1}\}$. Suppose first that $y=v_{i,d_i+1}$. By the
    overlap relations, we have $x=u_i=v_{i+1,1}$ and $y=v_{i,d_i+1}=v_{i+1,2}$.
    Since $z$ is adjacent to $y$, not adjacent to $x$, and lies in
    $F_{i+1}\setminus F_i$, the only possible choice is $z=v_{i+1,3}$. Hence
    $(x,y,z)$ is a path on three consecutive vertices of the rim path of
    $F_{i+1}$, contradicting Proposition~\ref{pro:c_basic}. Suppose now that
    $y=v_{i,d_i}=u_{i+1}$, and the path $(x,y,z)$ is a bicolored induced path on
    three vertices contained in $F_{i+1}$. By Proposition~\ref{pro:c_basic}, we
    must have $d_{i+1}\ge 3$. Hence, by the definition of the coloring in $F_i$,
    the color of $v_{i,d_i}$ was chosen as the unique color not used on
    $\{u_i,v_{i,0},\dots,v_{i,d_i-1}\}$. Note that $w=v_{i,j}$ belongs to this
    set (because $j\le \ell-3$), and so $c(y)=c(v_{i,d_i})\ne c(w)$,
    contradicting that $(w,x,y)$ is bicolored. This completes the proof. 
\end{proof}


\subsection{\texorpdfstring{The star chromatic number of $2$-paths is smaller than that of $2$-trees}{The star chromatic number of 2-paths is smaller than that of 2-trees}}

In Subsection~\ref{sec:5-coloring} we proved that every $2$-path is star
$5$-colorable. In this subsection we provide a $2$-tree whose star chromatic
number is $6$ (see Figure~\ref{fig:2-tree}). Consequently, the maximum star
chromatic number attained within the family of $2$-paths is strictly smaller
than the one attained within the family of $2$-trees. The existence of a
$2$-tree (on $48$ vertices) with star chromatic number $6$ was already exhibited
in~\cite{fertinJGT47}, and another one (with $41$ vertices) in
\cite{albertsonEJC11}; however, our example is noticeably smaller (on $21$
vertices) and minimal, i.e., any proper subgraph of $H$ admits a star
$5$-coloring. 

\begin{figure}
    \centering
    \includegraphics[width=0.5\linewidth]{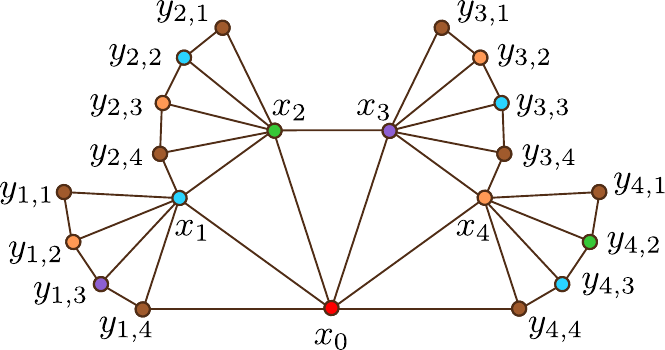}
    \caption{The $2$-tree $H$ from Proposition~\ref{pro:2-tree} together with a
    star $6$-coloring.}
    \label{fig:2-tree}
\end{figure}

\begin{proposition}
\label{pro:2-tree}
    The $2$-tree $H$ depicted in Figure~\ref{fig:2-tree} satisfies $\x(H)=6$.
    Moreover, $H$ is minimal, that is, any proper subgraph $H'$ of $H$ satisfies
    $\x(H')=5$.  
\end{proposition}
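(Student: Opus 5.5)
We do not have the explicit structure of $H$ in the text, only the figure, so the plan is necessarily structural. We must prove three things: $\x(H)\le 6$, $\x(H)\ge 6$, and that every proper subgraph $H'$ has $\x(H')=5$.

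\emph{Upper bound.} We will verify that the coloring depicted in Figure~\ref{fig:2-tree} is a star $6$-coloring. Since $H$ is chordal and the coloring is proper, it suffices to check that no induced $P_4$ is bicolored. As in Lemma~\ref{lem:P4-has-P3-in-fan}, every induced $P_3$ in a $2$-tree has the form $(x,y,z)$ with $y$ a common neighbor. It is therefore enough, for each vertex $y$, to list pairs of equally colored non-adjacent neighbors of $y$ and check that none of them extends to a bicolored $P_4$.

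\emph{Lower bound.} This is the main obstacle. We assume a star $5$-coloring $c$ of $H$ and force a contradiction by propagation, in the style of the proofs of Proposition~\ref{pro:graphs_H1H2} and Lemma~\ref{lem:dk}.
\begin{enumerate}
\item Start from a central edge $xy$ of high multiplicity, meaning many triangles are attached to it, and fix $c(x)=\red$ and $c(y)=\blue$ without loss of generality.
\item The common neighbors of $x$ and $y$ form an independent set. Any two of them with the same color form a bicolored $P_3$ through $x$ together with the color of $y$. This restricts how colors may repeat, so pigeonhole over the three remaining colors forces certain repetitions.
\item Whenever two common neighbors $a,b$ share a color, no other neighbor of $x$ (or of $y$) may receive $c(y)$ (resp.\ $c(x)$), since that would create a bicolored $P_4$ through $a$ or $b$. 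This argument mirrors the $H_2$ argument in Proposition~\ref{pro:graphs_H1H2}.
\item Branch on the resulting finitely many color patterns up to symmetry. In each branch, follow the attached fans outward until some vertex has all five colors forbidden, either by adjacency or by a bicolored $P_4$.
\end{enumerate}
We expect the case tree to be sizable. If the hand analysis becomes unwieldy, we would reduce it by exploiting the symmetries of $H$, or we would state that the exhaustive check was confirmed by computer search, consistent with the paper's use of \texttt{nauty} elsewhere.

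\emph{Minimality.} By monotonicity it suffices to treat maximal proper subgraphs, namely $H-e$ for each edge $e$ up to the automorphisms of $H$; vertex deletions are subgraphs of edge deletions. For each such orbit we exhibit an explicit star $5$-coloring, in a figure analogous to Figure~\ref{fig:2-paths_4colors}. Each coloring will be obtained by modifying the failing propagation so that the contradiction in the lower-bound argument uses exactly the deleted edge.

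For the equality $\x(H')=5$ rather than just $\le 5$, we also need a lower bound of $5$ for every proper subgraph. We plan to argue this via $\omega\le \x$ together with the observation that $H'$ retains a subgraph with star chromatic number $5$. However, this fails for small subgraphs such as a single edge. The statement must therefore be read as ``admits a star $5$-coloring,'' as the preceding paragraph of the paper phrases it, and we will prove that version.
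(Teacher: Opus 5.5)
Your upper bound (checking the coloring shown in the figure) is what the paper does. The other two parts have genuine gaps.

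\emph{Lower bound.} This is the real content of the proposition, and it is not argued. Your proposed mechanism does not fit $H$: no edge of $H$ lies in more than three triangles. So fixing the colors of an edge and pigeonholing its common neighbors over the three remaining colors forces nothing. Your step~3 is also false as stated. If $a,b$ are common neighbors of $x,y$ with $c(a)=c(b)$, the bicolored paths $(a,x,b)$ and $(a,y,b)$ forbid $c(x)$ and $c(y)$ on the \emph{other neighbors of $a$ and $b$}. They say nothing about the neighbors of $x$ or $y$. An unspecified case tree or computer search is not a proof.

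The missing idea is the one the paper's argument runs on. $H$ is a ``fan of fans'': $x_0$ is adjacent to the path $x_1x_2x_3x_4$, each $x_i$ is complete to a set $Y_i$ inducing a $P_4$, and $y_{1,4},y_{4,4}$ are adjacent to $x_0$. A $P_4$ inside $N(v)$ must use three colors different from $c(v)$. So once one further color is banned from $Y_i$, its palette is exactly three colors and must contain the color of a neighboring rim vertex $x_{i\pm1}$. That bans $c(x_i)$ from $Y_{i\pm1}$, via $(z,x_{i\pm1},x_i,y)$, and the restriction propagates along $x_1x_2x_3x_4$ until some $Y_k$ has only two colors left.

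The first ban comes from a bicolored $P_3$ through $x_0$:
\begin{itemize}
\item \emph{Case 1:} two rim vertices share a color (split by $j-i\in\{2,3\}$), and the path used is $(x_i,x_0,x_j)$.
\item \emph{Case 2:} $x_0,\dots,x_4$ use all five colors, so $y_{1,4}$ and $y_{4,4}$ must copy some $c(x_i)$, $c(x_j)$, and the paths used are $(y_{1,4},x_0,x_i)$ and $(y_{4,4},x_0,x_j)$.
\end{itemize}

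\emph{Minimality.} Your plan targets a false claim. The argument above uses only the following edges:
\begin{itemize}
\item $x_0x_k$ and $x_kx_{k+1}$;
\item $x_ky_{k,r}$;
\item the edges inside each $Y_k$;
\item $x_0y_{1,4}$ and $x_0y_{4,4}$.
\end{itemize}
That is $37$ of the $2\cdot 21-3=39$ edges of $H$. The two remaining edges are the unique edge between $Y_2$ and $V(H)\setminus(Y_2\cup\{x_2\})$, and its analogue for $Y_3$. They are never used, so deleting either of them leaves a graph with $\x=6$. For those $e$ no star $5$-coloring of $H-e$ exists, and no modification of the propagation can make the contradiction hinge on $e$.

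What is true, and what the paper's proof actually establishes (its abstract says ``proper induced subgraph''), is that every proper \emph{induced} subgraph is star $5$-colorable. The paper reduces to single-vertex deletions, and by the reflection symmetry to $z\in\{x_0,x_1,x_2\}\cup Y_1\cup Y_2$, each handled by an explicit coloring. You were right to weaken ``$=5$'' to ``admits a star $5$-coloring''. You must also replace ``subgraph'' by ``induced subgraph''.
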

\begin{proof}
    Let us first establish $\x(H)$. For the upper bound, the coloring depicted
    in Figure~\ref{fig:2-tree} is a star $6$-coloring of $H$, thus implying that
    $\x(H)\le 6$.

    For the lower bound, we show that $H$ cannot be star colored with $5$
    colors. Suppose otherwise and let
    \[
        c:V(H)\to \{\red,\blue,\green,\purple,\orange\}
    \]
    be a star $5$-coloring. Without loss of generality, assume $c(x_0)=\red$.
    For $i \in \{1,2,3,4\}$ set
    \[
        Y_i:=\{y_{i,1},y_{i,2},y_{i,3},y_{i,4}\}.
    \]
    Clearly, for each $i\in\{1,2,3,4\}$ no vertex of $Y_i$ receives the color
    $c(x_i)$, since $x_i$ is adjacent to every vertex in $Y_i$.

    \medskip\noindent \textit{Case 1.} There exist indices $1\le i<j\le 4$ such
    that $c(x_i)=c(x_j)$.

    Among all such pairs choose $(i,j)$ with $j-i$ minimum. Then $j-i\ge 2$
    since $c$ is proper. Renaming colors if necessary, we may assume
    $c(x_i)=c(x_j)=\blue$. Moreover, by the minimality of $j-i$ we have
    $c(x_\ell)\ne\blue$ for every $\ell$ with $i<\ell<j$.

    Since $x_0$ is adjacent to both $x_i$ and $x_j$, we have $c(x_0)\ne
    c(x_i)=\blue$. As $c(x_0)=\red$, we obtain:

    \begin{equation}
    \label{eq:no-red}
        \text{no vertex in $Y_i\cup Y_j$ receives the color $\red$,}
    \end{equation}
    because otherwise such a vertex together with $x_0,x_i,x_j$ would induce a
    bicolored $P_4$ in $H$, contradicting that $c$ is a star coloring.

    We distinguish two subcases according to the value of $j-i$.

    \medskip\noindent \textit{Subcase 1.1.} $j-i=2$.

    Then $(i,j)=(1,3)$ or $(2,4)$; by symmetry of $H$ we may assume
    $(i,j)=(1,3)$. Renaming colors among $\{\blue,\green,\purple,\orange\}$ if
    necessary, we may assume $c(x_1)=c(x_3)=\blue$ and $c(x_2)=\green$.

    By~\eqref{eq:no-red}, no vertex of $Y_3$ receives color $\red$, and as
    observed above no vertex of $Y_3$ receives color $c(x_3)=\blue$. Hence the
    vertices of $Y_3$ can only use the three colors
    $\{\green,\purple,\orange\}$. Since $Y_3$ induces a $P_4$ and $c$ is a star
    coloring, this $P_4$ cannot be bicolored, so at least three colors must
    appear on $Y_3$. Therefore $Y_3$ uses all three colors
    $\{\green,\purple,\orange\}$, and in particular there exists $y_{3,q}\in
    Y_3$ with $c(y_{3,q})=\green$.

    Consider now the path $(x_1, x_2, x_3, y_{3,q})$.
    Its colors are
    \[
        c(x_1)=\blue,\quad c(x_2)=\green,\quad c(x_3)=\blue,\quad
        c(y_{3,q})=\green.
    \]
    Thus this path is a bicolored $P_4$, contradicting that $c$ is a star
    coloring. 

    \medskip\noindent \textit{Subcase 1.2.} $j-i=3$.

    Here we have $(i,j)=(1,4)$. Renaming colors among
    $\{\blue,\green,\purple,\orange\}$ if necessary, we may assume
    \[
        c(x_1)=c(x_4)=\blue,\qquad c(x_2)=\green,\qquad c(x_3)=\purple.
    \]
    By~\eqref{eq:no-red}, no vertex in $Y_1\cup Y_4$ receives the color $\red$.

    \medskip
    \textit{Step 1: behavior of $Y_4$ and $Y_3$.}
    The vertices of $Y_4$ are adjacent to $x_4$, so they cannot receive color
    $\blue$; together with~\eqref{eq:no-red}, this implies that $Y_4$ can only
    use the three colors $\{\green,\purple,\orange\}$. As $Y_4$ induces a $P_4$
    and $c$ is a star coloring, this $P_4$ cannot be bicolored, so all three
    colors $\green,\purple,\orange$ appear on $Y_4$. In particular, there exists
    $y_{4,q}$ with $c(y_{4,q})=c(x_3)=\purple$.

    Now consider $Y_3$. If some vertex $z\in Y_3$ had color $c(x_4)=\blue$, then
    the path $(z, x_3, x_4, y_{4,q})$ would have colors
    $\blue,\purple,\blue,\purple$ and would therefore be a bicolored $P_4$, a
    contradiction. Thus, no vertex of $Y_3$ receives the color $\blue$, and of
    course none receives the color $c(x_3)=\purple$, since all vertices of $Y_3$
    are adjacent to $x_3$. Hence $Y_3$ can use only the three colors
    $\{\red,\green,\orange\}$. Again, as $Y_3$ induces a $P_4$, all three of
    these colors must appear on $Y_3$, so in particular there exists $y_{3,q}$
    with $c(y_{3,q})=c(x_2)=\green$.

    \medskip
    \textit{Step 2: behavior of $Y_2$.}
    If some vertex $z\in Y_2$ had color $c(x_3)=\purple$, then the path
        $(z, x_2, x_3, y_{3,q})$
    would have colors $\purple,\green,\purple,\green$ and would again be a
    bicolored $P_4$. Therefore no vertex of $Y_2$ receives color
    $c(x_3)=\purple$, and no vertex of $Y_2$ receives color $c(x_2)=\green$
    (because all vertices of $Y_2$ are adjacent to $x_2$). Thus $Y_2$ can only
    use the three colors $\{\red,\blue,\orange\}$. As $Y_2$ induces a $P_4$, all
    three of these colors must appear on $Y_2$. In particular, there exists
    $y_{2,q}$ such that $c(y_{2,q})=c(x_1)=\blue$.

    \medskip
    \textit{Step 3: behavior of $Y_1$.}
    If some vertex $z\in Y_1$ had color $c(x_2)=\green$, then the path
       $( z, x_1, x_2, y_{2,q})$
    would have colors $\green,\blue,\green,\blue$ and would be a bicolored
    $P_4$, a contradiction. Hence no vertex of $Y_1$ receives color
    $c(x_2)=\green$. On the other hand, every vertex of $Y_1$ is adjacent to
    $x_1$, so none can receive color $c(x_1)=\blue$; and by \eqref{eq:no-red}
    none receives color $\red$.

    Consequently, the vertices of $Y_1$ can use at most the two colors
    $\{\purple,\orange\}$. Since $Y_1$ induces a $P_4$, any such coloring of
    $Y_1$ with at most two colors yields a bicolored induced $P_4$ in $H$,
    contradicting again that $c$ is a star coloring.

    \smallskip
    Thus Subcase~1.2 is also impossible, and Case~1 cannot occur.

    \medskip\noindent \textit{Case 2.} The vertices $x_1,x_2,x_3,x_4$ receive
    pairwise distinct colors.

    Since $c(x_0)=\red$, the four colors $c(x_1),c(x_2),c(x_3),c(x_4)$ exhaust
    the remaining four colors $\{\blue,\green,\purple,\orange\}$. The vertices
    $y_{1,4}$ and $y_{4,4}$ are both adjacent to $x_0$, so neither of them can
    receive color $\red$. Thus each of $y_{1,4}$ and $y_{4,4}$ must reuse one of
    the colors $c(x_1),\dots,c(x_4)$. Hence there exist indices
    $i,j\in\{1,2,3,4\}$ such that
    \[
        c(y_{1,4})=c(x_i)
        \qquad\text{and}\qquad
        c(y_{4,4})=c(x_j).
    \]
    As the coloring is proper and $y_{1,4}$ (respectively $y_{4,4}$) is adjacent
    to $x_1$ (respectively $x_4$), we must have $i>1$ and $j<4$.

    Suppose that $i=2$ or $j=3$. By symmetry of $H$, we may assume $i=2$. Then
    the vertices of $Y_2$ cannot use color $c(x_0)$ or $c(x_1)$, as either
    choice together with $y_{1,4},x_0,x_2$ or with $y_{1,4},x_1,x_2$ would
    create a bicolored $P_4$. Moreover they cannot use $c(x_2)$, since all
    vertices in $Y_2$ are adjacent to $x_2$. Thus at most two colors remain
    available on $Y_2$, and since $Y_2$ induces a $P_4$, this would again
    produce a bicolored $P_4$ in $H$, contradicting that $c$ is a star coloring.

    We may therefore assume that $i>2$ and $j<3$, so $j<i$, and
    \begin{equation}
    \label{eq:no-red-second}
        \text{no vertex in $Y_i\cup Y_j$ receives the color $\red$,}
    \end{equation}
    as otherwise such a vertex together with $x_0,x_i,y_{i,4}$ or with
    $x_0,x_j,y_{4,4}$ would induce a bicolored $P_4$ in $H$.

    We now mimic the propagation argument from Case~1, starting at $Y_i$ and
    moving towards $Y_j$. First, the vertices of $Y_i$ cannot use colors $\red$
    or $c(x_i)$ (by \eqref{eq:no-red-second} and adjacency to $x_i$), so they
    must use the other three colors, each at least once (otherwise the induced
    $P_4$ on $Y_i$ would be bicolored). In particular, some vertex $y_{i,r}\in
    Y_i$ satisfies $c(y_{i,r})=c(x_{i-1})$. This implies that no vertex in
    $Y_{i-1}$ can receive color $c(x_i)$ (otherwise we would obtain a bicolored
    $P_4$ of the form $(y_{i-1,p},x_{i-1},x_i,y_{i,r})$). Furthermore, none of
    the vertices in $Y_{i-1}$ can use color $c(x_{i-1})$, since they are all
    adjacent to $x_{i-1}$. Thus $Y_{i-1}$ must use exactly the three colors in
    \[
        \{\red,\blue,\green,\purple,\orange\}
        \setminus\{c(x_{i-1}),c(x_i)\},
    \]
    and, as $Y_{i-1}$ induces a $P_4$, these three colors appear at least once.

    If $j=i-1$, then in addition the vertices of $Y_j=Y_{i-1}$ cannot use color
    $\red$ by~\eqref{eq:no-red-second}, so at most two colors remain available
    on $Y_j$. Since $Y_j$ induces a $P_4$, this yields a bicolored induced
    $P_4$, a contradiction. Hence we may assume $j<i-1$. Repeating the above
    argument at most twice, we eventually reach $Y_j$ and obtain that no vertex
    in $Y_j$ can receive color $c(x_{j+1})$, and of course no vertex in $Y_j$
    can use color $c(x_j)$, since all vertices in $Y_j$ are adjacent to $x_j$.

    Combining this with~\eqref{eq:no-red-second}, we see that the vertices in
    $Y_j$ cannot use the three colors
    \[
        \red,\;c(x_{j-1}),\;c(x_j),
    \]
    so at most two colors remain available on $Y_j$. As $Y_j$ induces a $P_4$,
    any such coloring yields a bicolored induced $P_4$ in $H$, again
    contradicting that $c$ is a star coloring.

    In both cases we arrive at a contradiction, so $H$ admits no star
    $5$-coloring. Therefore $\x(H)\ge 6$, and together with the upper bound this
    shows that $\x(H)=6$, as claimed. 

\medskip
    Finally, we show that $H$ is minimal: every proper subgraph $H'$ of $H$
    admits a star $5$-coloring. We may assume that $H'$ is maximal, so $H'$ is
    obtained from $H$ by deleting a single vertex, say $z$.

    If $z\in\{x_0,x_1,x_2,x_3,x_4\}$, then, by the symmetry of $H$, it suffices
    to consider $z\in\{x_0,x_1,x_2\}$. The corresponding star $5$-colorings of
    $H'$ are displayed in the top row of Figure~\ref{fig:2-tree-minimality}.

    It remains to deal with the case $z=y_{i,r}$ for some $i,r\in[4]$. Again, by
    the symmetry of $H$, we may assume $i\in\{1,2\}$. The two graphs in the
    bottom row of Figure~\ref{fig:2-tree-minimality} show partial star
    $5$-colorings of $H-Y_1$ and $H-Y_2$, respectively. We now extend these
    colorings to the remaining vertices in $Y_i\setminus\{z\}$ as follows.
    \begin{itemize}
        \item If $i=1$, we color the vertices of $Y_1\setminus\{z\}$ with the
            colors $\purple$ and $\orange$, alternating along the natural order
            $y_{1,1},y_{1,2},y_{1,3},y_{1,4}$ (skipping $z$), and choosing the
            starting color as $\orange$. In particular, if $z\neq y_{1,4}$ then
            $c(y_{1,4})=\orange$. This alternation guarantees that adjacent
            vertices in $Y_1\setminus\{z\}$ receive distinct colors, so the
            extension is still proper and does not create any new bicolored
            $P_4$, and hence preserves the star property on $H'$. 
        
        \item If $i=2$, we color the vertices of $Y_2\setminus\{z\}$ with the
            colors $\red$ and $\orange$, alternating along the natural order
            $y_{2,1},y_{2,2},y_{2,3},y_{2,4}$ (skipping $z$); the choice of the
            starting color is irrelevant. This alternation guarantees that
            adjacent vertices in $Y_2\setminus\{z\}$ receive distinct colors,
            and since neither $\red$ nor $\orange$ is used on $x_2$ in the
            partial coloring, the extension is still proper and does not create
            any new bicolored $P_4$, so the star property is preserved on $H'$.
    \end{itemize}

    In each case the resulting coloring of $H'$ is proper, and a direct
    verification shows that no induced $P_4$ of $H'$ is bicolored. Hence $H'$
    is star $5$-colorable, thus implying the minimality of $H$. 
\end{proof}

\begin{figure}
    \centering
    \includegraphics[width=1\textwidth]{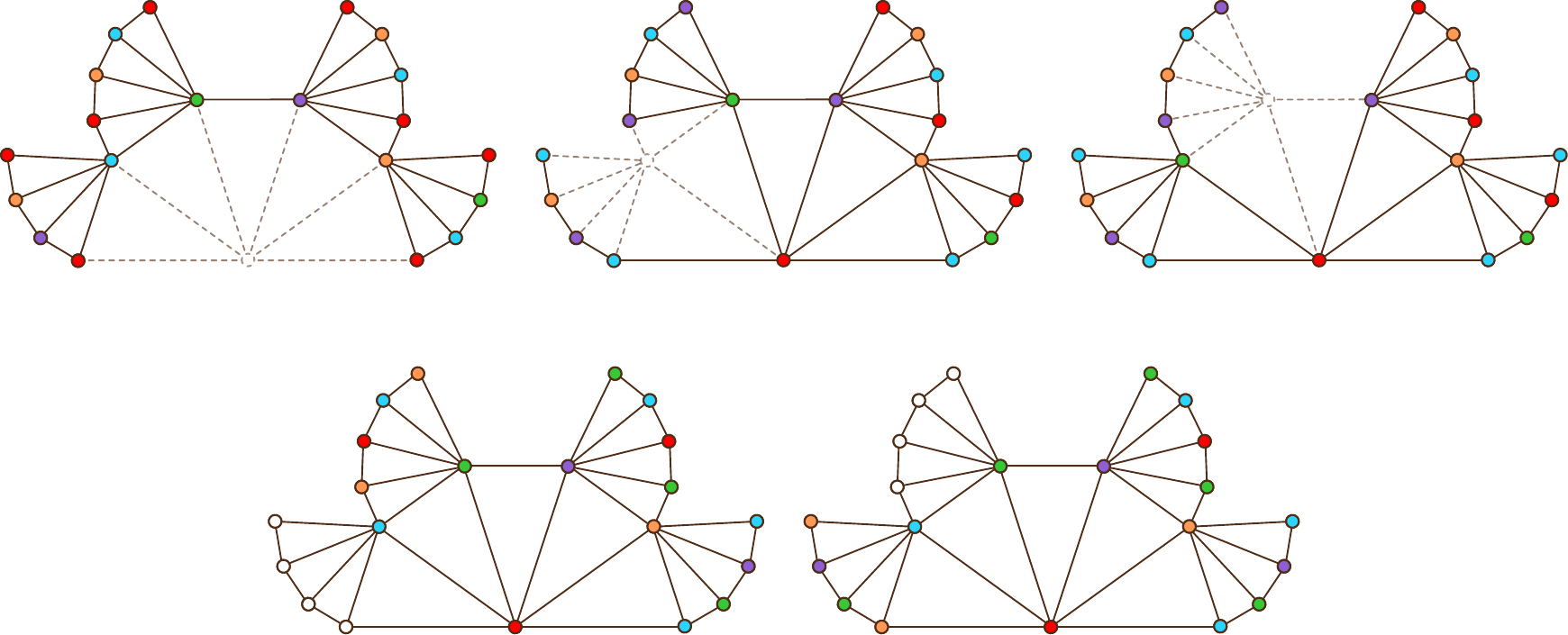}
    \caption{Maximal proper subgraphs of $H$ used in the proof of
    Proposition~\ref{pro:2-tree}. The three graphs in the top row show explicit
    star $5$-colorings of $H-x_0$, $H-x_1$ and $H-x_2$ (up to
    symmetry). The two graphs in the bottom row display partial star
    $5$-colorings of $H-y_{1,r}$ and $H-y_{2,r}$; the remaining
    uncolored vertices in the corresponding fans are then colored with two
    colors as specified in the proof.}
    \label{fig:2-tree-minimality}
\end{figure}

\section*{Acknowledgements}
G.~Benítez-Bobadilla is supported by postdoctoral grant “Estancias Posdoctorales
por México” by Secihti (CVU: 622815).

C.~Hern\'andez-Cruz is supported by UNAM-PAPIIT grant IN106425 and SEP-CONACYT
Ciencia B\'asica A1-S-8397 grant.

C. Linhares Sales is supported by projects number
312044/2022-4 and 404479/2023-5 of CNPq - Conselho Nacional de Desenvolvimento 
Cient\'ifico e Tecnol\'ogico, 
as well as the UNAM-PREI program of the Universidad 
Nacional Aut\'onoma de M\'exico hosted by the 
Facultad de Ciencias. 

A.~Trujillo-Negrete is supported by  Universidad Nacional Autónoma de México
Postdoctoral Program (POSDOC).

\bibliographystyle{plain}
\bibliography{bib}


\appendix

\section{Split minimal obstructions to star 6-colorability}
\label{app:smoS6c}

In this appendix, we present the complete list of split minimal obstructions to
star $6$-coloring that are non-isomorphic to $K_7$, which we obtained with
computer assistance. Each minimal obstruction is encoded in \texttt{graph6}
format by a string~\cite{graph6}. Interested readers may easily obtain drawings
of these graphs from their \texttt{graph6} representation via the \textit{Draw
Graph} feature on the \textit{House of Graphs}
website~\cite{Coolsaet2023HouseOfGraphs}.

\begin{multicols}{3}
\begin{enumerate}

    \item \verb+GJ\z|{+

    \item \verb+GJ\||{+

    \item \verb+GJ]~~{+

    \item \verb+GJn~~{+

    \item \verb+GR\z~{+

    \item \verb+GR~~~{+

    \item \verb+GT\z~{+

    \item \verb+GT\~~{+

    \item \verb+GTlzz{+

    \item \verb+I?CW}]nZw+

    \item \verb+I?CX|\nrw+

    \item \verb+I?C]\\n^w+

    \item \verb+I?C]\\n~w+

    \item \verb+I?Gx{|^rw+

    \item \verb+I?G{y~Nzw+

    \item \verb+I?G{y~N~w+

    \item \verb+I?I]X|^^w+

    \item \verb+I?PHx}nfw+

    \item \verb+I?Qkx|~^w+

    \item \verb+I?Qkz\^Nw+

    \item \verb+I?_zY}~~w+

    \item \verb+I?`Xz^^nw+

    \item \verb+I?`Yx{~Zw+

    \item \verb+I?`hx|^vw+

    \item \verb+I?`ix{~Nw+

    \item \verb+I?`ix{~^w+

    \item \verb+I?`ix{~fw+

    \item \verb+I?`ix{~nw+

    \item \verb+I?`ix{~~w+

    \item \verb+I?`ix|~^w+

    \item \verb+I?`ix|~~w+

    \item \verb+I?`ix}~^w+

    \item \verb+I?`ix~~~w+

    \item \verb+I?aJY{~^w+

    \item \verb+I?aYx|n^w+

    \item \verb+I?bJY{~Nw+

    \item \verb+I?qkz\^Nw+

    \item \verb+J?AGy[ndzN_+

    \item \verb+K??CAKvJw~n^+

    \item \verb+K??CBdfUx^b~+

    \item \verb+K??CISnF|fm^+

    \item \verb+K??E@SfU{nb~+

    \item \verb+L????TPD|Fdnr^+

    \item \verb+L????r@KxZi~b~+

    \item \verb+L???@GXBzFq^r^+

    \item \verb+L???B?Xd[Vg~H~+

    \item \verb+L???C@`VXnd~U~+

    \item \verb+L???CQFJWvg~[^+

    \item \verb+L???C``FYVc~b~+

    \item \verb+L???D?bU]Ja~B~+

    \item \verb+L???D@HTWvc~b~+

    \item \verb+L???E?FKxZqnQ~+

    \item \verb+L???E?rJWzp^P~+

    \item \verb+M?????DAlHjFc~W~_+

    \item \verb+M?????EHWViNeNwn_+

    \item \verb+M????@GGwj`vkNo~_+

    \item \verb+M????A?XirenI~Fn_+

    \item \verb+M????AC@ybdNEnwn_+

    \item \verb+M????AK[HTan_~O~_+

    \item \verb+M????A_ChR_~VNo~_+

    \item \verb+M????B?HWffFQ^`n_+

    \item \verb+N??????CXD`fafPV}Bw+

    \item \verb+N??????_N?ajAvKnhfw+

    \item \verb+N??????o@``V@^Wvrbw+

    \item \verb+O???????@_WBEfHVhL{L^+

    \item \verb+O???????E@ARAVHJoxzo^+

    \item \verb+P??????????R?Zw`rAyiNJH{+

    \item \verb+P?????????W?L`ITbDyG~_x{+

\end{enumerate}
\end{multicols}

\end{document}